\theoremstyle{plain}
\newtheorem{maintheorem}{Theorem}
\newtheorem{maincorollary}[maintheorem]{Corollary}
\newtheorem{theorem}{Theorem}
\newtheorem*{theorem*}{Theorem}
\newtheorem{claim}{Claim}[section]
\newtheorem{lemma}[claim]{Lemma}
\newtheorem{remark}[claim]{Remark}
\newtheorem{definition}[claim]{Definition}
\newtheorem{proposition}[claim]{Proposition}
\newcommand{\D}{\ensuremath{\mathbb{D}}}
\newcommand{\N}{\ensuremath{\mathbb{N}}}
\newcommand{\R}{\ensuremath{\mathbb{R}}}
\newcommand{\T}{\ensuremath{\mathbb{T}}}
\newcommand{\Z}{\ensuremath{\mathbb{Z}}}
\newcommand{\cC}{{\mathcal C}}
\newcommand{\cF}{{\mathcal F}}
\newcommand{\Fc}{{\mathcal F}}
\newcommand{\cL}{{\mathcal L}}
\newcommand{\cN}{{\mathcal N}}
\newcommand{\cP}{{\mathcal P}}
\newcommand{\cR}{{\mathcal R}}
\newcommand{\cS}{{\mathcal S}}
\newcommand{\supp}{\operatorname{supp}}
\newcommand{\ds}{\displaystyle}
\newcommand{\ba}{{\textbf{a}}}
\newcommand{\bb}{{\textbf{b}}}
\newcommand{\bc}{{\textbf{c}}}
\newcommand{\bt}{{\textbf{t}}}
\newcommand{\sC}{C}
\newcommand{\sE}{E}
\newcommand{\torus}{{\mathbb{T}^u}}
\newcommand{\Int}{\mbox{Int}\,}
\newcommand{\fm}{\mathfrak{m}}
\begin{document}
\title[Regularity of the density of SRB measures]{Regularity of the density of SRB measures for solenoidal attractors}
	
\date{}
\author{Carlos Bocker}
\address[Carlos Bocker]{Department of Mathematics, UFPB\\ Jo\~ao Pessoa-PB, Brazil}
\email{cbocker@gmail.com}

\author{Ricardo Bortolotti}
\address[Ricardo Bortolotti]{Department of Mathematics, UFPE\\ Recife-PE, Brazil}
\email{ricardo@dmat.ufpe.br}

\begin{abstract}
We show that a %large
 class of higher-dimensional hyperbolic endomorphisms admit absolutely continuous invariant probabilities whose density are regular.
The maps we consider are given by 
$ T(x,y) = (\sE (x), \sC(y) + f(x) )$,
 where $\sE$ is a linear expanding map of $\torus$, $\sC$ is a linear contracting map of $\R^d$, $f$ is in $C^r(\torus,\R^d)$ and $r \geq 2$.
We prove that if 
%\red{$\|\sC\| < \frac{\|\sE^{-1}\|^{-1}}{|\det \sE|^{\frac{1}{{u-d+1}}} }$ and} 
$|(\det \sC)(\det\sE)| \|\sC^{-1}\|^{-2s}>1$ for some $s<r-(\frac{u+d}{2}+1)$ %, $u\ge d$ 
 and $T$ satisfies a certain transversality condition,
then the density of the SRB measure of $T$ is contained in the Sobolev space  $H^s(\T^u\times \R^d)$%for almost every $f\in C^r(\torus,\R^d)$,
, in particular, if
%When 
$s>\frac{u+d}{2}$ then the density is $C^k$ for every $k<s-\frac{u+d}{2}$.
We also exhibit a condition involving $\sE$ and $\sC$ under which this tranversality condition is valid for almost every $f$.
\end{abstract}

\maketitle

\section{Introduction}

The ergodic theory of hyperbolic endomorphisms was developed in the last years and presents similar results to the ergodic theory of invertible hyperbolic dynamics such as SRB measures, equilibrium states and structural stability \cite{jiang, przytycki, qian.xie,  qian.zhang, urbansky.wolf}.

One interesting phenomena that may occur for hyperbolic endomorphisms is that the SRB measure needs not to be singular when the dynamic expands volume, what does not happens for hyperbolic proper attractors \cite{alves.pinheiro, bowen.ruelle}. This was  observed in \cite{fat.solenoidal, tsujii.acta} and extended in \cite{bocker.bortolotti}, where was proved the absolute continuity of the SRB measure under certain geometrical transversality condition.

The absolute continuity of the SRB measure is usually associated to maps with only positive Lyapunov exponents \cite{adler, alves}. The main feature in \cite{avila.gouezel.tsujii, bocker.bortolotti, fat.solenoidal} is a geometrical condition of transversality between the images of the unstable directions that allows to conclude properties of regularity of the SRB measure that  are similar to those that occurs for expanding maps. Due to these results, one may expect that volume expanding hyperbolic attractors for endomorphisms satisfies ergodic properties similar to expanding maps.

Since the density of the SRB measure is smooth for expanding maps \cite{expanding.dif1, expanding.dif2}, one should ask whether the property of the smoothness of the density is also valid for volume expanding hyperbolic endomorphisms under this transversality condition. Here we prove the Sobolev regularity of the density of the SRB measure.

We study the action of the operator $\cL$ on an appropriate Banach space $\mathcal{B}$ adapted to the dynamic. This Banach space is defined using the method developed in \cite{gouezel.liverani} that was also used in \cite{avila.gouezel.tsujii}, defining an anisotropic norm of the function corresponding to its action in the space of regular functions supported in ``almost stable manifolds' (see definition \ref{norm.dagger}). %This method implies that the fixed point is in the Banach space and also implies stronger statistical properties as consequence of the spectral data of $\cL$.
In this work, we consider maps  $T:\torus\times \R^d \to \torus\times \R^d $  given by

\begin{equation}
T(x,y) = ( E(x), C(y) + f(x) ),
\end{equation}
where $E$ is a linear expanding map of the torus $\torus$, $C$ is a linear contraction of $\R^d$, $f\in C^r(\torus,\R^d)$ and $r\geq 2$.

In \cite{bocker.bortolotti} the authors gave sufficient conditions for the absolute continuity of the SRB measure $\mu_T$ of $T$. 
In this paper, we study the Sobolev regularity of the density $\phi_T = d\mu_T/dx$. The low-dimensional case $u=d=1$ was previously studied in \cite{avila.gouezel.tsujii}. Here we are focused on the higher-dimensional setting with $u\geq d \geq 1$.

Denote by $\mathcal{E}(u)$ the set of the linear expanding maps of $\T^u$, by $\mathcal{C}(d)$ the set of the linear contractions of $\R^d$ and denote $T=T(E,C,f)$ for $E\in \mathcal{E}(u)$, $C\in \mathcal{C}(d)$ and $f\in C^r(\torus,\R^d)$.
Given $\sE\in \mathcal{E}(u)$, consider the following subset $\mathcal{C}_s(d;E)$ of $\mathcal{C}(d)$:
$$ \mathcal{C}_s(d;\sE)=\left\{ \sC \in \mathcal{C}(d) , |\det \sC| |\det \sE| \|C^{-1}\|^{-2s} > 1 \text{ and } \|\sC\| < \frac{\|\sE^{-1}\|^{-1}}{|\det \sE|^{\frac{1}{{u-d+1}}} } \right\}.$$

When $T$ contracts volume $(|\det E| |\det C|<1)$ there exists no absolutely continuous invariant probability (ACIP). On the other hand, if $T$ expands volume then
the condition
$|\det \sC| |\det \sE| \|C^{-1}\|^{-2s} > 1$ is valid  for some $s>0$.

\begin{maintheorem}\label{teo.a}
Given integers $u \geq  d $ and $0 \leq s < r-(\frac{u+d}{2}+1)$, $\sE\in \mathcal{E}(u)$ and $C\in \mathcal{C}_s(d;E)$, there exists an open and dense subset $\mathcal{U}$ of $C^r(\torus, \R^d)$ such that the corresponding SRB measure $\mu_{T}$ of  $T=T(\sE,\sC,f)$ for $f\in\mathcal{U}$ is absolutely continuous with respect to the volume of $\torus\times\R^d$ and its density is in $H^s(\T^u\times \R^d)$.
\end{maintheorem}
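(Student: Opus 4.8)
\emph{Proof strategy.} The plan is to realize the SRB density $\phi_T$ as the fixed point of the transfer operator $\cL$ of $T$ acting on the anisotropic Banach space $\cB$ of Definition~\ref{norm.dagger}, and to read off its Sobolev regularity from the way $\cB$ sits relative to $H^s(\torus\times\R^d)$.

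First I would dispose of the genericity part. The transversality condition is a strict inequality on finitely many jets of $f$ (through $T$ and its first iterates), hence it defines a $C^r$-open set of maps; density follows from an explicit perturbation, and it is precisely here that the hypothesis $\sC\in\cC_s(d;\sE)$ --- in particular $\|\sC\|<\|\sE^{-1}\|^{-1}|\det\sE|^{-\frac{1}{u-d+1}}$, which forces $u\ge d$ --- is used: the gap between the expansion of $\sE$ and the contraction of $\sC$ leaves enough room to move the images of the unstable directions into general position by an arbitrarily small change of $f$. This produces the open dense set $\cU$.

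Next, the analytic core. Since $\sE$ and $\sC$ are linear, $|\det DT|\equiv|\det\sE||\det\sC|$ and
\begin{equation*}
(\cL\phi)(x,y)=\frac{1}{|\det\sE||\det\sC|}\sum_{\sE x'=x}\phi\bigl(x',\sC^{-1}(y-f(x'))\bigr).
\end{equation*}
On $\cB$ --- whose norm measures the action of a distribution on smooth functions supported along almost stable manifolds, carrying $s$ derivatives --- I would establish a Lasota--Yorke (Doeblin--Fortet) inequality
\begin{equation*}
\|\cL^n\phi\|_{\cB}\le C\,\theta^{\,n}\|\phi\|_{\cB}+C_n\|\phi\|_{\cB'},
\end{equation*}
with $\theta<1$ and a compact inclusion $\cB\hookrightarrow\cB'$ into a weaker norm. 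The contraction factor comes out as (essentially) $\|\sC^{-1}\|^{s}\,(|\det\sC||\det\sE|)^{-1/2}$: the weight $|\det DT^n|^{-1}$ against the number $|\det\sE|^n$ of inverse branches of $T^n$ produces $(|\det\sE||\det\sC|)^{-n/2}$ in the $L^2$-type norm, while $\|\sC^{-1}\|^{ns}$ is the price of the $s$ stable derivatives; hence the standing hypothesis $|\det\sC||\det\sE|\,\|\sC^{-1}\|^{-2s}>1$ is exactly what gives $\theta<1$. Because $T$ expands volume, the inverse branches of $T^n$ overlap with unbounded multiplicity, so this bound cannot be obtained branch by branch; the transversality condition is used to convert the overlap into genuine cancellation, through an $L^2$ almost-orthogonality (oscillatory-integral) estimate for the phases $\xi\cdot\sum_{j}\sC^{j}f(\sE^{j}x')$, after first checking that transversality is preserved --- indeed improved --- under iteration of $T$. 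The constraint $s<r-(\tfrac{u+d}{2}+1)$ enters here: of the $r$ derivatives of $f$, one is spent controlling the geometry of the almost stable manifolds and their iterates, and $\tfrac{u+d}{2}$ more are lost to Sobolev embedding when passing from $L^2$-type bounds on the $(u+d)$-dimensional space to the pointwise estimates needed to close the argument.

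Finally, by Hennion's theorem the Lasota--Yorke inequality together with compactness makes $\cL$ quasi-compact on $\cB$ with spectral radius $1$; since $T$ admits an absolutely continuous invariant probability for $f\in\cU$ by \cite{bocker.bortolotti}, the value $1$ is an eigenvalue, and a weak-$*$ approximation (applying the uniform bound to $\mathbf{1}$ on a trapping region) places $\phi_T$ in $\cB$. The construction of $\cB$ provides a continuous inclusion $\cB\hookrightarrow H^s(\torus\times\R^d)$, whence $\phi_T\in H^s$; the $C^k$ statement is then the Sobolev embedding $H^s\hookrightarrow C^k$ for $k<s-\tfrac{u+d}{2}$. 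I expect the Lasota--Yorke estimate --- specifically, turning the transversality hypothesis into a quantitative, frequency-localized control of how the branches of $T^n$ overlap --- to be the main obstacle; the remaining steps are either soft functional analysis or a perturbation argument.
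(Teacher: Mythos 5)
Your overall architecture (transfer operator, anisotropic norm, a Lasota--Yorke inequality whose contraction rate is governed by $|\det E|\,|\det C|\,\|C^{-1}\|^{-2s}$ together with a transversality-controlled count of overlapping inverse branches, with $s<r-(\tfrac{u+d}{2}+1)$ paying for the frequency decay) is indeed the paper's architecture, but two of your steps have genuine gaps. The first is the genericity step: the transversality condition of Definition~\ref{transversality.1} is the asymptotic condition $\limsup_q\frac1q\log\tau_f(q)=0$, and the paper explicitly warns (Remark~\ref{openness}) that this is \emph{not} an open condition --- it is not a strict inequality on finitely many jets of $f$, so your claim that it ``defines a $C^r$-open set'' fails as stated. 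What is open (by upper semicontinuity of $(C,f)\mapsto\tau_f(q)$ for fixed $q$) is the finite-$q$ inequality $B_1\tau_f(q)<(|\det E||\det C|\fm(C)^{2s})^q$, which is all the Lasota--Yorke argument needs; density is not obtained by an explicit finite perturbation but from the parametrized measure-theoretic argument of \cite{bocker.bortolotti} (Proposition~\ref{bb}), which yields a residual set $\mathcal{R}$ of $f$'s satisfying the asymptotic condition, and then $\mathcal{U}=\bigcup_{f\in\mathcal{R}}\mathcal{U}_f$ is open and dense.

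The second gap is the endgame. You produce the density via Hennion quasi-compactness plus the assertion that $1$ is an eigenvalue ``since $T$ admits an a.c.i.p.\ by \cite{bocker.bortolotti}''. This is circular in scope --- Theorem~\ref{teo.a} is strictly stronger than the absolute continuity theorem of \cite{bocker.bortolotti} (for $s=0$ the present hypothesis is only $|\det E||\det C|>1$), so you cannot import the a.c.i.p.\ from there over the whole claimed range --- and the quasi-compactness machinery is not even available for small $s$: the compactness input for Hennion's theorem rests on Sobolev compactness $H^s\hookrightarrow H^t$ plus the continuous embedding $H^t\hookrightarrow(\mathcal{B}',\|\cdot\|^\dagger_{\rho_1})$ of Lemma~\ref{continuous}, which needs $\rho_1+u/2<t<s$ and hence $s>u/2$; that is precisely why the paper reserves the spectral-gap route for Theorem~\ref{teo.2} and proves Theorem~\ref{teo.1} (hence Theorem~\ref{teo.a}) without it. The paper's softer argument is: the Third Lasota--Yorke inequality gives a uniform bound $\|\cL^n\psi_0\|\le K\|\psi_0\|$ for a fixed smooth $\psi_0\ge0$, so the Ces\`aro averages $\psi_n=\frac1n\sum_{j<n}\cL^j\psi_0$ are bounded in $H^s$; Banach--Alaoglu gives a weakly convergent subsequence $\psi_{n_k}\rightharpoonup\psi_\infty\in H^s$, and since Birkhoff averages of Lebesgue-a.e.\ point converge to $\mu_T$, the limit measures $\psi_{n_k}m$ converge to $\mu_T$, identifying $\mu_T=\psi_\infty m$. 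You should replace your eigenvalue argument by such a direct weak-compactness argument; as written, your proof does not cover the stated range of $s$.
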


The condition behind the subset $\mathcal{U}$ corresponds to a geometrical condition of transversal overlaps of the images (see definition \ref{transversality.1}). In \cite{bocker.bortolotti}, the authors proved that this condition is generic when $C$ is in $C_0(d;\sE)$.

Notice that if  $C\in \mathcal{C}_0(d;E)$ we obtain the absolute continuity of $\mu_T$ under the condition $|\det C| |\det E|>1$, which is more general than the hypothesis of \cite[Theorem A]{bocker.bortolotti}. Moreover, by continuity, if $C \in C_0(d;E)$ then $C \in \mathcal{C}_s(d;E)$ for some $s>0$.

\begin{maincorollary}\label{cor.b}
Given integers $u \geq  d $,  $\sE\in \mathcal{E}(u)$ and $C\in C_0(d;E)$, there exists an open and dense subset $\mathcal{U}$ of $C^r(\torus, \R^d)$ such that the corresponding SRB measure $\mu_{T}$ of every map $T=T(\sE,\sC,f)$ for $f\in\mathcal{U}$ is absolutely continuous with respect to the volume of $\torus\times\R^d$ and its density is in $H^s(\T^u\times \R^d)$ for some $s>0$.
\end{maincorollary}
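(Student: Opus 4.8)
The plan is to obtain Corollary~\ref{cor.b} as an immediate specialization of Theorem~\ref{teo.a}: it suffices to exhibit one value $s>0$ that is simultaneously admissible for that theorem and for which $\sC\in\mathcal{C}_s(d;\sE)$.

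First I would analyze the two inequalities defining $\mathcal{C}_s(d;\sE)$ as functions of $s$. The inequality $\|\sC\|<\|\sE^{-1}\|^{-1}/|\det\sE|^{1/(u-d+1)}$ does not depend on $s$, so it holds for every $s$ as soon as it holds for $s=0$, which is part of the hypothesis $\sC\in\mathcal{C}_0(d;\sE)$. For the first inequality, set $g(s):=|\det\sC|\,|\det\sE|\,\|\sC^{-1}\|^{-2s}=|\det\sC|\,|\det\sE|\,e^{-2s\log\|\sC^{-1}\|}$. Since $\sC$ is a contraction we have $\|\sC^{-1}\|\ge 1$, hence $g$ is continuous and non-increasing on $[0,\infty)$, and $g(0)=|\det\sC|\,|\det\sE|>1$ because $\sC\in\mathcal{C}_0(d;\sE)$. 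Therefore there is $s_0>0$ with $g(s)>1$ for every $s\in[0,s_0]$; equivalently $\sC\in\mathcal{C}_s(d;\sE)$ for every such $s$.

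Then I would pick $s:=\tfrac12\min\{\,s_0,\ r-(\tfrac{u+d}{2}+1)\,\}$, which is strictly positive once $r>\tfrac{u+d}{2}+1$ (the regime in which Theorem~\ref{teo.a} has content). By construction $0<s<r-(\tfrac{u+d}{2}+1)$ and $\sC\in\mathcal{C}_s(d;\sE)$, so Theorem~\ref{teo.a} applies with this $s$ and yields an open and dense subset $\mathcal{U}\subset C^r(\torus,\R^d)$ — depending on the now-fixed data $(\sE,\sC,s)$ — such that for every $f\in\mathcal{U}$ the SRB measure $\mu_T$ of $T=T(\sE,\sC,f)$ is absolutely continuous and has density in $H^s(\T^u\times\R^d)$. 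This is exactly the assertion of Corollary~\ref{cor.b}.

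Since the whole argument is a one-step reduction to Theorem~\ref{teo.a}, there is no genuine obstacle; the only points needing a word of care are the continuity (and monotonicity) of $s\mapsto g(s)$ at $s=0$, which lets the strict volume-expansion inequality $|\det\sC|\,|\det\sE|>1$ survive a small increase of the exponent, and the fact that the set $\mathcal{U}$ is permitted to depend on the particular $s$ we have selected.
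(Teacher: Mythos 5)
Your proposal is correct and follows essentially the same route as the paper: the paper's proof of Corollary~\ref{cor.b} is exactly the observation that the inequality $|\det \sE||\det \sC|\|\sC^{-1}\|^{-2s}>1$ is open (continuous) in $s$, so validity at $s=0$ gives validity for some small $s>0$, after which Theorem~\ref{teo.a} applies. Your extra remarks (monotonicity of $s\mapsto g(s)$, the $s$-independence of the second condition defining $\mathcal{C}_s(d;\sE)$, and the constraint $s<r-(\tfrac{u+d}{2}+1)$) only make the same argument more explicit.
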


In the situation where $s>\frac{u+d}{2}$, Sobolev's embedding theorem implies that any $\phi_T$ coincides almost everywhere with a $C^k$ function for every $k<s-\frac{u+d}{2}$. In particular $\phi_T$ is continuous almost everywhere, that implies that the attractor $\Lambda$ has non-empty interior.

\begin{maincorollary}\label{cor.c}
	Under the assumptions of Theorem \ref{teo.a}, if $r\geq u+d+2$ and $s>\frac{u+d}{2}$, then the corresponding attractor $\Lambda_T$ has non-empty interior.
\end{maincorollary}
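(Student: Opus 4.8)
The plan is to obtain this directly from Theorem~\ref{teo.a} via the Sobolev embedding theorem, combined with the elementary fact that an absolutely continuous invariant measure is carried by the attractor.

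First I would record that the exponent constraints are consistent: the condition $r\geq u+d+2$ makes the interval $\bigl(\tfrac{u+d}{2},\,r-(\tfrac{u+d}{2}+1)\bigr)$ nonempty, so under the assumptions of Theorem~\ref{teo.a} one may take $s$ in the admissible range with $s>\tfrac{u+d}{2}$, and the theorem yields $\phi_T\in H^s(\T^u\times\R^d)$. Since $s>\tfrac{u+d}{2}$, the Sobolev embedding $H^s\hookrightarrow C^0$ produces a continuous representative $\widetilde\phi_T\geq 0$ of $\phi_T$, which I fix from now on. The set $U:=\{\widetilde\phi_T>0\}$ is then open, and it is nonempty since $\int\widetilde\phi_T\,d\mathrm{vol}=\mu_T(\T^u\times\R^d)=1$.

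It remains to see that $U\subseteq\Lambda_T$. Because $\mu_T=\widetilde\phi_T\,d\mathrm{vol}$ with $\widetilde\phi_T$ continuous and the volume of full support, one has $\supp\mu_T=\overline{U}$, so it is enough to show $\supp\mu_T\subseteq\Lambda_T$. Choose a closed ball $B\subset\R^d$ large enough that $C(B)+f(\T^u)\subseteq\operatorname{int}(B)$; this is possible since $\|C\|<1$ and $f$ is bounded on $\T^u$. Then $T(\T^u\times B)\subseteq\T^u\times B$, and $\Lambda_T=\bigcap_{n\geq 0}T^n(\T^u\times B)$. From $A\subseteq T^{-1}(T(A))$ and the $T$-invariance of $\mu_T$ one gets $\mu_T(T(A))\geq\mu_T(A)$ for every Borel set $A$; combined with $\mu_T(\T^u\times B)=1$ (the SRB measure is carried by the attractor, which lies in $\T^u\times B$), an induction gives $\mu_T\bigl(T^n(\T^u\times B)\bigr)=1$ for all $n$. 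Since the sets $T^n(\T^u\times B)$ are compact and nested, $\mu_T(\Lambda_T)=1$, and as $\Lambda_T$ is closed this gives $\supp\mu_T\subseteq\Lambda_T$. Hence $\Lambda_T$ contains the nonempty open set $U$.

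I do not expect any genuine obstacle here: all the analytic difficulty is absorbed into Theorem~\ref{teo.a}, and what remains is bookkeeping of the exponent ranges plus the standard identification of the support of a continuous density with (the closure of) its positivity set, together with its containment in the attractor.
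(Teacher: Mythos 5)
Your proof is correct and takes essentially the same route as the paper: Theorem~\ref{teo.a} gives $\phi_T\in H^s$ with $s>\frac{u+d}{2}$ (the condition $r\geq u+d+2$ guaranteeing the admissible range is nonempty), Sobolev embedding gives a continuous representative, and its nonempty open positivity set lies in $\supp\mu_T\subseteq\Lambda_T$. The only difference is that you re-derive $\mu_T(\Lambda_T)=1$ via the trapping region, which is redundant since the paper already records that the SRB measure is supported on $\Lambda$.
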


Consider the Ruelle-Perron-Frobenius transfer operator (or simply transfer operator) $\cL:L^1 \to L^1$ defined by
\begin{equation}\label{eq. perron}
\cL \phi(x)= \sum_{T(y)=x}\frac{\phi(y)}{|\det DT(y)|}.
\end{equation}

The technical part of this paper corresponds a Lasota-Yorke inequality for the transfer operator in a Banach space $\mathcal{B}$ contained in $H^s$. This kind of approach also allows to conclude statistical properties as consequences of the spectral gap. Actually, for $s>u/2$ we prove the existence of a spectral gap for $\cL$ and, thus, exponential decay of correlations for $T$ in a Banach space containing smooth observables.

\begin{maintheorem}\label{teo.d} Suppose that $\sC \in \mathcal{C}_s(d;E) $ for $u/2 < s< r-(\frac{u+d}{2}+1)$ and 
	$$
	\zeta \in (\max\{\|E^{-1}\|^{\frac{1}{1+\log{(1+\frac{d}{2}+u)}}},(|\det E||\det C|\|C^{-1}\|^{2s})^{\frac{1}{2}}\},1).
	$$ 
%that $\gamma \in (0,1)$ satisfies
%$$ \gamma^{-u(u+d/2+2)} < |\det E| |\det C| \|C^{-1}\|^{-2s}.$$ 

Then, for any $f \in C^r(\torus,\R^d)$ in an open and dense set,  there exists a Banach space $\mathcal{B}$ contained in $H^s(\torus\times \R^d)$ and containing $C^{r-1}(D)$ such that the action of the operator $\cL$ in $\mathcal{B}$ has spectral gap with essential spectral radius at most $\zeta$. In particular, $T$ has exponential decay of correlations in some linear space $\mathcal{\tilde B}$ with exponential rate $\zeta$, where  $\mathcal{\tilde B}$ is contained in $\mathcal{B}$ and contains $C^{r-1}(D)$ .
\end{maintheorem}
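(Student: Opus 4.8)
\emph{Proof strategy.} The proof is an implementation, in the present skew‑product setting, of the transfer–operator method on anisotropic Banach spaces of \cite{gouezel.liverani}, following the scheme already carried out for $u=d=1$ in \cite{avila.gouezel.tsujii}. It breaks into three parts: (i) construct the Banach space $\mathcal{B}$ and prove the embeddings $C^{r-1}(D)\hookrightarrow\mathcal{B}\hookrightarrow H^s(\torus\times\R^d)$; (ii) establish a Lasota--Yorke inequality for $\cL$ on $\mathcal{B}$ with contraction rate $\zeta$; (iii) deduce the spectral gap from Hennion's theorem together with positivity and topological mixing, and read off the decay of correlations. The technical crux is part (ii); parts (i) and (iii) are adaptations of by‑now standard arguments.

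For part (i), I first fix the open and dense set $\mathcal{U}\subset C^r(\torus,\R^d)$ of maps $T=T(\sE,\sC,f)$ satisfying the transversality condition of Definition \ref{transversality.1}, which is the same set that occurs in Theorem \ref{teo.a}; in particular the SRB density $\phi_T$ already lies in $H^s(\torus\times\R^d)$. On distributions supported on the relevant domain $D$ I then introduce, as in Definition \ref{norm.dagger}, a finite scale of seminorms $\|\cdot\|_p$, $0\le p\le p_{\max}$ with $p_{\max}$ of order $r-1$, obtained by pairing against $C^{r-1}$ test functions supported on \emph{almost stable leaves} (submanifolds $C^1$‑close to the vertical fibres $\{x\}\times\R^d$), with up to $p$ derivatives moved onto the test function; the strong norm of $\mathcal{B}$ is $\max_p\zeta^{-p}\|\cdot\|_p$ up to the usual weights, and the weak norm is $\|\cdot\|_0$. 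A Littlewood--Paley computation, exactly as in the model case, gives the two continuous inclusions: the first is immediate for smooth functions, and the second uses $s<r-(\frac{u+d}{2}+1)$ to spend enough of the available $r-1$ derivatives when passing from the anisotropic pairing to the isotropic $H^s$ norm, while $s>u/2$ is needed so that $H^s$ functions admit traces on the $d$‑dimensional almost stable leaves (which have codimension $u$). Relative compactness of the $\mathcal{B}$‑unit ball in the weak norm follows from Arzel\`a--Ascoli on the test‑function side.

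Part (ii) is the heart of the matter: the inequality
\begin{equation}\label{eq.LY.sketch}
\|\cL^n\phi\|_{\mathcal{B}}\ \le\ C\,\zeta^{\,n}\,\|\phi\|_{\mathcal{B}}\ +\ C_n\,\|\phi\|_{w},\qquad n\ge 1.
\end{equation}
Expanding $\cL^n$ as a sum over the $|\det\sE|^n$ inverse branches of $T^n$, the pairing of $\cL^n\phi$ with a test function on an almost stable leaf $W$ becomes a sum of pairings of $\phi$ with the pullbacks of that test function, times Jacobian weights, supported on the preimage leaves $T^{-n}W$; these preimages are again almost stable because $\sE$ expands the $\torus$‑direction and $\sC$ contracts the fibre direction (here the second defining inequality of $\mathcal{C}_s(d;\sE)$, bounding $\|\sC\|$ by $\|\sE^{-1}\|^{-1}|\det\sE|^{-1/(u-d+1)}$, together with $u\ge d$, are used to keep the cone field of admissible leaves invariant). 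Combining the volume weight $(|\det\sE||\det\sC|)^{-n}$, the concentration of the pushed‑forward density along the $d$‑dimensional stable direction, the contraction of leafwise derivatives by $\sC^n$, the comparison with the isotropic $H^s$‑norm across the transversal directions (which brings in powers of $\|\sC^{-1}\|^s$), and the square root coming from the $L^2$‑based nature of the norm, one is led to the rate $\big(|\det\sE||\det\sC|\,\|\sC^{-1}\|^{2s}\big)^{1/2}$; the transversality condition is exactly what guarantees that only boundedly many preimage leaves meet a given transversality cone, so the sum over the $|\det\sE|^n$ branches behaves like an average rather than accumulating. The second rate, $\|\sE^{-1}\|^{\,1/(1+\log(1+\frac d2+u))}$, appears, as in \cite{avila.gouezel.tsujii}, from splitting test functions of intermediate regularity across $1+\log(1+\frac d2+u)$ dyadic frequency bands in the expanding direction, each band costing one power of $\|\sE^{-1}\|$. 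Since $\zeta$ dominates both rates, the principal terms are absorbed into $C\zeta^n\|\phi\|_{\mathcal{B}}$, and the finitely many lower‑order terms (fewer derivatives on the test function, or the $C^0$‑defect between $W$ and a genuine stable leaf) are each dominated by $C_n\|\phi\|_w$. Getting the \emph{sharp} powers of $\|\sC^{-1}\|$, of the Jacobian, and of $\|\sE^{-1}\|$ in \eqref{eq.LY.sketch} \emph{uniformly} over all admissible leaves and all test functions, while correctly handling the combinatorics behind the exponent $1+\log(1+\frac d2+u)$, is where essentially all the work lies.

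Part (iii) is then routine. Hennion's theorem applied to \eqref{eq.LY.sketch} and the compactness of part (i) yields that the essential spectral radius of $\cL$ on $\mathcal{B}$ is at most $\zeta$, so the spectrum outside $\{|z|\le\zeta\}$ consists of finitely many eigenvalues of finite multiplicity. Since $\cL$ is positive and preserves $\int\cdot\,dx$, and since $\sE$ being a mixing expanding endomorphism of $\T^u$ makes $T$ topologically mixing on $\Lambda_T$, the classical Perron--Frobenius argument shows that $1$ is a simple eigenvalue, with eigenfunction $\phi_T\in\mathcal{B}$, and is the only eigenvalue of modulus $1$: this is the spectral gap. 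Writing $\cL^n=\Pi+N^n$ with $\Pi\phi=\phi_T\int\phi\,dx$, and taking for $\tilde{\mathcal{B}}$ a suitable $\cL$‑invariant subspace of $\mathcal{B}$ containing $C^{r-1}(D)$ on which the subdominant spectrum of $\cL$ lies in $\{|z|\le\zeta\}$, one gets $\|N^n|_{\tilde{\mathcal{B}}}\|\le C\zeta^n$, hence $\big|\int(\psi\circ T^n)\,\varphi\,dx-\int\psi\,d\mu_T\int\varphi\,dx\big|=\big|\int\psi\,N^n\varphi\,dx\big|\le C\zeta^n\|\psi\|_{L^2}\|\varphi\|_{\mathcal{B}}$ for $\psi\in L^2$ and $\varphi\in\tilde{\mathcal{B}}$, which is exponential decay of correlations at rate $\zeta$.
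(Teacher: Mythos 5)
Your outline misidentifies where the Sobolev regularity comes from, and this is a genuine gap rather than a presentational difference. You build $\mathcal{B}$ as a purely anisotropic space (pairings against test functions on almost stable leaves, norm $\max_p\zeta^{-p}\|\cdot\|_p$) and claim in part (i) that a Littlewood--Paley computation gives a continuous inclusion of this space into $H^s(\torus\times\R^d)$. That inclusion is false: the anisotropic seminorms are of weak type along the leaves (a supremum of pairings against $C^k$ test functions), so a function oscillating at frequency $N$ in the fibre direction, e.g.\ $h_N(x,y)=\chi(x)\sin(Ny_1)g(y)$, has $\|h_N\|^{\dagger}_{\rho_0}=O(1)$ after integrating by parts against the test function, while $\|h_N\|_{H^s}\sim N^s\to\infty$. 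No amount of spare derivatives ($s<r-(\frac{u+d}{2}+1)$) repairs this, because the obstruction is in the direction along the leaves, not transverse to them. This is precisely why the paper does \emph{not} try to embed an anisotropic space into $H^s$: the strong norm is taken to be $\|\phi\|_{H^s}+\|\phi\|^{\dagger}_{\rho_0}$, so that $\mathcal{B}\subset H^s$ is built in, and the $H^s$ part is controlled dynamically through a separate Lasota--Yorke inequality for the Sobolev norm (Proposition \ref{l.norma.sobolev}), in which the transversality condition, a partition of unity over cylinders, Lemma \ref{estimar.transf.vs.l1} and Lemma \ref{transversality.lemma} produce exactly the rate $\big(\tau(q)/(|\det E||\det C|\fm(C)^{2s})^{q}\big)^{1/2}$. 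In your scheme the rate $(|\det E||\det C|\|C^{-1}\|^{2s})^{1/2}$ is attributed to an ``$L^2$-based'' structure that your norm does not have, so the key estimate of part (ii) is not actually derivable in the framework you set up.

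Two further points. First, the exponent $\frac{1}{1+\log(1+\frac d2+u)}$ does not come from splitting test functions into $1+\log(1+\frac d2+u)$ dyadic frequency bands; in the paper it arises from iterating the $\dagger$-norm inequality across derivative orders $\rho_1<\rho\le\rho_0$ (Claims \ref{l. norma cruz 2} and \ref{l. norma cruz 3}), with $\nu(\rho_0,\rho_1)=\sum_{j=\rho_1+1}^{\rho_0}\frac1j\le 1+\log(u+\frac d2+1)$; your proposed mechanism is unsubstantiated. Likewise, compactness of the strong unit ball in the weak norm is not an Arzel\`a--Ascoli statement on test functions: in the paper it follows from the compact embedding $H^s\hookrightarrow H^t$ (available only because $\|\cdot\|_{H^s}$ is part of the strong norm) together with the continuous embedding $H^t\hookrightarrow(\mathcal{B}',\|\cdot\|^{\dagger}_{\rho_1})$ proved via a trace theorem (Lemma \ref{continuous}), which is where $s>u/2$ is really used. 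Second, the open and dense set cannot be taken to be ``the set of $f$ satisfying the transversality condition'': that condition is only residual, not open (Remark \ref{openness}); openness is recovered by fixing $q$ and using upper semicontinuity of $\tau_f(q)$ in the quantitative condition $B_1\tau_f(q)<(|\det E||\det C|\fm(C)^{2s})^{q}$, and the dense open set is the union of the resulting neighborhoods over $f$ in the residual set given by Proposition \ref{bb}.
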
	

An interesting consequence of Theorem D is that the rate of exponential decay of correlations can be taken uniform when the rate of contraction tends to be weaker, for instance, through the family of dynamics $T_t=T(E,(1-t)C+tI,f)$, $0\leq t <1$. An interesting problem is to show that $T_1$ has exponential decay of correlations with the same rate $\zeta$ for an open an dense set of $f$'s.

The plan of the paper is as follows: Section 2 details the basic definitions (including the transversality condition) and statements of this work. In section 3 we introduce the norms, some properties that shall be used further and two Main Lasota-Yorke inequalities for the transfer operator. Section 4 is dedicated to the proof of the two Main Lasota-Yorke inequalities.
In section 5 we prove a third Lasota-Yorke Inequality and we prove Theorems 1 and 2. In Section 6 we conclude Theorems A, D and Corollaries B, C as consequence of the genericity of the transversality condition when $C\in \mathcal{C}(d;E)$.

\section{Definitions and statements}

Given integers $u$ and $d$, we consider the
dynamic $T=T(\sE,\sC,f):\torus\times \R^{d} \rightarrow \torus\times \R^{d}$ given by
\begin{equation}
T(x,y) = \big ( \sE(x) , \sC(y)  + f(x)  \big)\,,
\end{equation}
where $\sE\in \mathcal{E}(u)$ is a map whose lift $E:\R^u \rightarrow \R^u$ is a linear map with $\|E^{-1}\|^{-1}>1$ that preserves the lattice $\Z^u$, $\sC\in \mathcal{C}(d)$ is a linear invertible map with $\|\sC\|<1$  and $f \in C^r(\torus,\R^d)$, $r \geq 2$.

The attractor $\Lambda$ for $T$ is given by $\Lambda=\cap_{n\geq 0}{T^n(D)}$ for some $D= \torus \times [-K_0,K_0]^d$  satisfying $T(D) \subset D$. Since the  restriction of $T$ to $\Lambda$ is a transitive hyperbolic endomorphism, it admits a unique SRB measure $\mu_T$ supported on $\Lambda$ \cite{urbansky.wolf}.

We suppose in the whole text that $T$ is volume expanding and we consider $s>0$ such that $|\det E| |\det C| \|C^{-1}\|^{-2s}>1$.

\subsection{Codifying the dynamics}

Let us fix notation involving the partition of the base space $\torus$ that codify the action of the expanding map $\sE$. This is essentially the same notation used in \cite{bocker.bortolotti}.

Fix $\mathcal{R}=\{\mathcal{R}(1),\cdots,\mathcal{R}(r)\}$  a Markov partition for $\sE$, that is,
$\mathcal{R}(i)$ are disjoint open sets, the interior of each $\overline{R(i)}$ coincides with $R(i)$,
$\sE_{|_{\mathcal{R}(i)}}$ is one-to-one,
${\bigcup}_i \overline{\mathcal{R}(i)}= \torus$  and
$\sE({\mathcal{R}(i)}) \cap {\mathcal{R}(j)} \neq \emptyset$ implies that ${\mathcal{R}(j)}\subset E(\mathcal{R}(i))$. Each $\mathcal{R}(i)$ is called a rectangle of the Markov partition.
Markov partitions always exist for expanding maps with arbitrarily small diameter (see \cite{markov.partition}).

%{Given $z\in \T^{u}$ and $0<r<1/2$, we denote by $B(z,r)$ the ball centered in $z$ with radius $r$.}
Let us suppose that $\operatorname{diam}(\mathcal{R}) < \gamma$, where ${0<\gamma<1/2}$ is a constant such that for every $x\in \torus$ and $y\in \sE^{-1}(x)$ there exists a unique affine inverse branch
$g_{y,x}:B(x,\gamma) \to B(y,\gamma)$  such that
\begin{equation}\label{eq. ramo inverso}
g_{y,x}(x)=y \quad \text{and}\quad \sE(g_{y,x}(z))=z
\end{equation}
for every $z \in B(x,\gamma)$.

Consider the set $\overline{I}=\{ 1, \cdots, r\}$ and $\overline{I}^n$ the set of words of length $n$ with letters in $\overline{I}$, $1\leq n \leq \infty$. Denoting by $\ba =(a_{i})_{i=1}^{n}$ a word in $\overline{I}^{n}$, define $I^n$ the subset of admissible words $\ba =(a_{i})_{i=1}^{n}$, that is,  with the property that
\begin{equation}
\sE(\mathcal{R}(a_{i+1})) \cap \mathcal{R}(a_{i}) \neq \emptyset \text{ for every } 0\leq i \leq n-1 \,.
\end{equation}

Consider the partition $\mathcal{R}^{n}:= \vee_{i=0}^{n-1} \sE^{-i}(\mathcal{R})$ and, for every $\ba \in \overline{I}^n$,  the set $\mathcal{R}(\ba) = \cap_{i=0}^{n-1} \sE^{-i}(\mathcal{R}(a_{n-i}))$ in $ \mathcal{R}^n$, which is nonempty if and only if $\ba\in I^n$. The~truncation of $\ba=(a_j)_{j=1}^{n}$ to length $1\le p\le n$  is denoted by $[\ba]_p=(a_j)_{j=1}^{p}$.

For any $x\in\torus$, fix some $\pi(x) \in \overline{I}$ such that $x\in \overline{\mathcal{R}(\pi(x))}$.
For any $\bc\in I^p$, $1\leq p < \infty$, we consider $I^n(\bc)$ the set of words $\ba\in I^n$ such that $\sE^n(\mathcal{R}(\ba)) \cap \mathcal{R}(\bc) \neq \emptyset$.
Define $I^n(x):=I^n(\pi(x))$ and,
for $\ba\in I^{n}(x)$, denote by $\ba(x)$ the point $y\in\mathcal{R}(\ba)$ that satisfies $\sE^{n}(y)=x$.

For any $\ba\in I^n$ and $1\leq n <\infty$ we consider the set $\D(\ba):=\{x\in\torus | \ba \in I^n(x)\}=\sE^n(\cR(\ba))=\sE(\cR([\ba]_1))$, which is a union of rectangles of the Markov partition. The image of  $\mathcal{R}(\ba)\times\{ 0 \}$ by $T^{n}$ is the graph of the function $S(\cdot,\ba):\D(\ba) \to \R^d$ given by
\begin{equation}
S(x,\ba) := \sum_{i=1}^{n} \sC^{i-1} f (\sE^{n-i}(\ba(x))) = \sum_{i=1}^{n} \sC^{i-1} f([\ba]_{i} (x)).
\end{equation}

Consider the sets $I^\infty(x) = \{ \ba \in I^\infty$ such that $[\ba]_i \in I^i(x)$ for every $i\geq 1 \}$  and
$\D(\ba):=\{ x \in \torus | \ba\in I^\infty(x)\}=\cap_{n=1}^{+\infty}\sE^n(\cR([\ba]_n)) =\sE(\cR([\ba]_1))$ for $\ba\in I^\infty$.
If $\ba\in I^{\infty}(x)$, we define $S(x,\ba) = {\lim}_{n\to\infty} S(x,[\ba]_n)$.

For any $p\geq 1$ and $\bc\in I^p$,
%the restriction of $S(\cdot,\ba)$ to each atom $\mathcal{R}(\bc)$ of the partition $\mathcal{R}$ is uniformly bounded in the $C^r$-topology, so it can be extended to the closure as a $C^{r}$ function redefining it on the border, which we denote by $S_{\bc}(\cdot,\ba)$.
let us denote by $\mathcal{R}_*(\bc)$ the union of atoms $\mathcal{R}(\tilde{\bc})$, $\tilde{\bc}\in I^p$,
that are adjacent to $\mathcal{R}(\bc)$.
We suppose that the diameter of the partition $\mathcal{R}$ is small enough such that the diameter of $\mathcal{R}_*(\bc)$ is smaller than $\gamma$. For $\ba \in I^i$, let us denote by $E^{-i}_{\bc,\ba}$ the inverse branch of $E^i$ satisfying $E^{-i}_{\bc,\ba}(\mathcal{R}(\bc))\subset \mathcal{R}(\ba)$ (and so $E^{-i}_{\bc,\ba}(\mathcal{R}_{*}(\bc))\subset \mathcal{R}_{*}(\ba)$).    We can extend $S(x,\ba)$ to a ball $B_\bc$ of radius $\gamma$ containing  $\mathcal{R}_*(\bc)$ by
\begin{equation}
S_\bc(x,\ba) := \sum_{i=1}^{n} \sC^{i-1} f(E^{-i}_{\bc,\ba}(x)).
\end{equation}

Consider the constant $\alpha_0 := \frac{\|f\|_{C^r}}{1-\|C\|} $. Notice also that
$S_\bc ( \cdot, \ba)$ is of class $C^r$ and
\begin{equation}\label{alpha_0}
\|E\|^j \|\partial^\alpha S_\bc(x,\ba)\| \leq \alpha_0
\end{equation}
for every $x \in \mathcal{R}_*(\bc)$ and multi-index $|\alpha| = j$, $0 \leq j \leq r$.

%\begin{remark}
%	Note that, if $\ba\in I^q(x)$, then $\pi(\ba(x))=a_q$ and $\pi([\ba]_{q-j}(x))) = \pi(E^j(\ba(x)))=a_{q-j}$, $0\leq j \leq q-1$.
%\end{remark}
%
%\begin{remark}\label{number.of.elements}
%	Let ${N:=|\det \sE|}$ be the degree of $E$, we have that $\#I^{q}(\bc) = N^q$ for every $\bc \in I^p$. Actually, for {almost} every $x\in\torus$ and $q\geq 1$, there are exactly $N^q$ rectangles $\mathcal{R}(\ba)$, $\ba\in I^q$ such that $ \sE^q(\mathcal{R}(\ba)) \cap \mathcal{R}(\pi(x)) \neq \emptyset$, corresponding to those that contain the $N^q$ preimages of $x$.
%\end{remark}

\subsection{The transversality condition}
	
Given a linear map $A:\R^{u}\rightarrow \R^{d}$, denote by
\begin{equation}
\displaystyle \fm(A):= \sup_{\dim W = d} \inf_{\|v\|=1, v\in W} {\|A(v)\|}
\end{equation}
the smallest singular value of $A$. Denote the minimum and maximum rates of expansion and contraction by $\underline{\mu}= \|\sE^{-1}\|^{-1}$, $\overline{\mu}= \|\sE\|$, $\underline{\lambda}= {\|\sC^{-1}\|^{-1}}$, $\overline{\lambda}= \|\sC\|$. Consider also $N=|\det \sE|$ the degree of the expanding map and  $\theta=\overline{\lambda}\underline{\mu}^{-1}$.

\begin{definition}\label{transversality.1}
Given $T=T(\sE,\sC,f)$ as above, integers $1\leq p, q <\infty$, $\bc\in I^p$ and $\ba, \bb \in I^{q}(\bc)$, we say that $\ba$ and $\bb$  are \textbf{transversal} on $\bc$ if
\begin{equation}\label{transversality}
\fm(   D S_{\bc}(x,\ba) - D S_{\bc}(y,\bb)    )   > 3 \theta^q \alpha_{0}
\end{equation}
for every $x, y \in \overline{\mathcal{R}_*(\bc)}$.
%\end{definition}		
%\begin{definition}\label{transversality.2}
%Given $T$ as above,
Defining the integer $\tau(q)$ by
\begin{equation}
\tau(q) =  \min_{p\geq 1} \max_{\bc\in I^{p}} \max_{\ba\in I^{q}(\bc)} \#\{ \bb\in I^{q}(\bc) | \text{ } \ba \text{ is not transversal to } \bb \text{ on } \bc \},
\end{equation}
we say that it holds the \textbf{transversality condition} if
 \begin{equation}\label{transversality.condition}
 \limsup_{q\to\infty}\frac{1}{q}\log \tau(q) =0.
 \end{equation}
\end{definition}

When $E$ and $C$ are fixed, we denote $\tau_f(q)$ to denote its dependence on $f$. In \cite{bocker.bortolotti}, it was given a  condition which implies that, for every $\beta>0$, the set of $f$'s satisfying
 	$\limsup_{q\to\infty}\frac{\log \tau(q)}{q}>\beta$ is open and dense. % has zero Lebesgue measure.
 	More precisely, considering
 		$$
 		\displaystyle \mathcal{C}(d;\sE)=\left\{ \sC \in \mathcal{C}(d) , \|\sC\| < \frac{\|\sE^{-1}\|^{-1}}{|\det \sE|^{\frac{1}{{u-d+1}}} } \right\},
 		$$
 		it was proved that there exists a residual subset $\mathcal{R}\subset C^r(\torus, \R^d)$ such that if $\sC\in  \mathcal{C}(d;\sE)$, then $\limsup \frac{\log \tau_f(q)}{q}=0$ for every $f \in \mathcal{R}$ (see Proposition \ref{bb}).
 	
Theorems A and D are obtaining putting together their more explicit formulations evolving the transversality condition given below with the genericity of the transversality condition.

\begin{theorem}\label{teo.1} Given $0\leq s < r-(\frac{u+d}{2}+1)$, $E \in \mathcal{E}(u)$, $C \in \mathcal{C}(d)$ and $f \in C^r(\torus,\R^d)$ such that $|\det E| |\det C| \|C^{-1}\|^{-2s} >1$ and the transversality condition is valid, then there exists an open set $\mathcal{U} \subset \mathcal{C}(d) \times C^r(\torus, \R^d)$ containing $f$ such that for every $(\tilde{C},\tilde{f})\in\mathcal{U}$ the SRB measure $\mu_T$ for $T=T(E,C,\tilde{f})$ is absolutely continuous and its density is in $H^s(\torus \times \R^d)$.
\end{theorem}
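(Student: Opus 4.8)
I would run the transfer-operator argument of \cite{gouezel.liverani} in the anisotropic setting of \cite{avila.gouezel.tsujii}.

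\emph{Functional framework.} First I would fix the Ruelle operator $\cL$ of \eqref{eq. perron} and the anisotropic norm $\|\cdot\|_\dagger$ of Definition \ref{norm.dagger}, which measures a function on $D$ by its pairing with smooth compactly supported densities carried by the ``almost stable leaves'' $\{(x,S_\bc(x,\ba))\}$ --- an $H^s$-type test along each such leaf together with control of finitely many derivatives in the complementary directions --- and let $\mathcal{B}$ be the corresponding completion. From the fact that each $S_\bc(\cdot,\ba)$ is $C^r$ with the uniform bounds \eqref{alpha_0} one gets the continuous inclusions $C^{r-1}(D)\subset\mathcal{B}\subset H^s(\torus\times\R^d)$, the second one because the ambient $H^s$ norm is recovered by integrating the leafwise tests over the foliation by graphs. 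I would also fix a weaker norm $\|\cdot\|_\flat$, one Sobolev degree coarser, so that bounded subsets of $(\mathcal{B},\|\cdot\|_\dagger)$ are precompact for $\|\cdot\|_\flat$.

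\emph{Lasota--Yorke inequality.} The technical core is to produce $n_0\geq1$, $\lambda\in(0,1)$ and $C_0>0$ with
\[
\|\cL^{n_0}\phi\|_\dagger\leq\lambda\,\|\phi\|_\dagger+C_0\,\|\phi\|_\flat\quand\|\cL\phi\|_\flat\leq C_0\,\|\phi\|_\flat
\]
for all $\phi\in\mathcal{B}$. Expanding $\cL^q\phi$ over the inverse branches indexed by $\ba\in I^q(x)$, each branch contracts fibres at a rate governed by $\underline\lambda,\overline\lambda$ and carries the Jacobian factor $(|\det\sE||\det\sC|)^{-q}$; tested in the leafwise $H^s$ norm this yields a ``diagonal'' gain of order $\bigl(\|\sC^{-1}\|^{2s}(|\det\sE||\det\sC|)^{-1}\bigr)^{q/2}$ per pair of coincident leaves. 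For words $\ba\neq\bb$ that are transversal on $\bc$ in the sense of \eqref{transversality}, the separation $\fm(DS_\bc(x,\ba)-DS_\bc(y,\bb))>3\theta^q\alpha_0$ forces the two transported test densities to be almost mutually singular, so their cross term is absorbed into $C_0\|\phi\|_\flat$ at the cost of the $\tfrac{u+d}{2}$ Sobolev degrees lost in the stationary-phase/almost-orthogonality estimate, and a given $\ba$ has at most $\tau(q)$ non-transversal partners. Summing the double sum (with a Cauchy--Schwarz over branch pairs) the $\dagger$-part of $\|\cL^q\phi\|_\dagger$ is bounded by a constant times
\[
\tau(q)^{1/2}\,\bigl(\|\sC^{-1}\|^{2s}(|\det\sE||\det\sC|)^{-1}\bigr)^{q/2}\,\|\phi\|_\dagger
\]
plus a $q$-dependent multiple of $\|\phi\|_\flat$. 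Since $|\det\sE||\det\sC|\,\|\sC^{-1}\|^{-2s}>1$ the geometric factor decays, and since $\limsup_q\frac1q\log\tau(q)=0$ the combinatorial factor is subexponential, so the prefactor is $<1$ once $q=n_0$ is large; this is precisely where the hypotheses $|\det E||\det C|\|C^{-1}\|^{-2s}>1$ and the transversality condition \eqref{transversality.condition} enter, while $s<r-(\tfrac{u+d}{2}+1)$ leaves just enough of the $r$ derivatives of $f$ to pay simultaneously for the $s$ leafwise Sobolev degrees and the $\tfrac{u+d}{2}$ lost on the transversal cross terms.

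\emph{Conclusion and openness.} Given the Lasota--Yorke inequality and the compact inclusion, the standard quasi-compactness criterion (Hennion) shows that $\cL$ on $\mathcal{B}$ is quasi-compact of spectral radius $1$; since $\cL$ preserves positivity and integrals, $1$ is an eigenvalue with a nonnegative eigenfunction $\phi\in\mathcal{B}$, and, after normalizing $\int\phi=1$, $\phi$ is the density of an absolutely continuous invariant probability supported on $\Lambda$. As $T|_\Lambda$ is a transitive hyperbolic endomorphism with a unique SRB measure and any ACIP on $\torus\times\R^d$ has absolutely continuous conditionals along unstable leaves, this probability is $\mu_T$, whence $\phi_T\in\mathcal{B}\subset H^s(\torus\times\R^d)$. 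For the openness statement, observe that the inequality for $\cL^{n_0}$ involves only the finitely many inverse branches, admissible words and graphs $S_\bc(\cdot,\ba)$ of length $\leq n_0$ (and a fixed scale $p$ realizing $\tau(n_0)$), all depending continuously on $(C,f)\in\mathcal{C}(d)\times C^r(\torus,\R^d)$, and that the inequalities \eqref{transversality} are strict on the compact sets $\overline{\mathcal{R}_*(\bc)}$; hence the same $n_0,\lambda,C_0$ work --- with the same space $\mathcal{B}$, which depends only on $E$ and uniform bounds --- throughout a neighbourhood $\mathcal{U}$ of $(C,f)$, and the previous argument applies to every $(\tilde C,\tilde f)\in\mathcal{U}$. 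The main obstacle is the Lasota--Yorke step itself: making the cancellation on transversal pairs quantitatively sharp enough that the Sobolev budget $s<r-(\tfrac{u+d}{2}+1)$ is not exceeded.
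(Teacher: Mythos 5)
There is a genuine gap, and it sits at the heart of your functional framework. You take $\mathcal{B}$ to be the completion of $C^{r-1}(D)$ with respect to the anisotropic norm $\|\cdot\|^\dagger$ alone and assert $\mathcal{B}\subset H^s$ ``because the ambient $H^s$ norm is recovered by integrating the leafwise tests over the foliation''. That is not true and not how the paper proceeds: the $\dagger$-norm of Definition \ref{norm.dagger} is a supremum over leaves of weak (dual-$C^k$) pairings, and it does not control the $L^2$-in-all-variables structure of $H^s$; the correct continuous embedding goes the other way (Lemma \ref{continuous}: $H^t\subset \mathcal{B}'$ for $t>\rho_1+u/2$). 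For this reason the paper works with the composite norm $\|\phi\|=\|\phi\|_{H^s}+\|\phi\|^\dagger_{\rho_0}$, and the transversality mechanism is harvested \emph{in the Sobolev norm}, via the Fourier/cone estimate of Lemma \ref{estimar.transf.vs.l1} and Cauchy--Schwarz in frequency space (Proposition \ref{l.norma.sobolev}), yielding the coefficient $B_1\tau(q)/(|\det E||\det C|\fm(C)^{2s})^q$ in front of $\|\phi\|_{H^s}^2$. The $\dagger$-norm Lasota--Yorke (Proposition \ref{l.norma cruz}) contains no transversality at all; its contraction factor $\delta^{\rho n}$ comes purely from base expansion. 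Your proposed inequality, which puts the $\tau(q)^{1/2}\bigl(\|C^{-1}\|^{2s}(|\det E||\det C|)^{-1}\bigr)^{q/2}$ gain directly into the $\dagger$-norm, is unsubstantiated: the ``almost mutual singularity'' of transported test densities for transversal words is exploited in the paper through $L^2$ orthogonality of Fourier modes in the dual cones, a structure the leafwise sup-type $\dagger$-norm does not see. Without either (a) the inclusion $\mathcal{B}\subset H^s$ or (b) an $H^s$-type estimate, your conclusion that the density lies in $H^s$ does not follow.

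A second, independent problem is the route to existence. You invoke Hennion quasi-compactness, which needs a compact embedding of the strong space into the weak one; in the paper this is available only when $s>u/2$ (it is exactly the hypothesis of Theorem \ref{teo.2}, needed for Lemma \ref{continuous} plus the compact Sobolev embedding). Theorem \ref{teo.1} allows any $0\le s<r-(\frac{u+d}{2}+1)$, including $s<u/2$ and even $s=0$, so the spectral-gap machinery is not at your disposal there. The paper instead only uses the uniform bound $\|\cL^n\psi_0\|\le K\|\psi_0\|$ from the Third Lasota--Yorke (Proposition \ref{togheter}) applied to the Ces\`aro averages $\psi_n=\frac1n\sum_{j<n}\cL^j\psi_0$, identifies $\lim\int\phi\,d(\psi_n m)=\int\phi\,d\mu_T$ by the physical-measure property of $\mu_T$, and extracts a weak limit $\psi_\infty\in H^s$ by Banach--Alaoglu; no compactness of embeddings or quasi-compactness is required. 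Your openness argument (continuity in $(C,f)$ of the finitely many branch data at the fixed scale $q$, upper semicontinuity of $\tau(q)$, strictness of \eqref{transversality}) does match the paper's Remark \ref{openness} and is fine.
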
	

Notice that Theorem above is stronger than \cite[Theorem 2.9]{bocker.bortolotti} because for $s=0$ the condition is just $|\det E| |\det C|>1$.

Stronger properties related to the action of $\cL$ in a Banach space $\mathcal{B}\subset H^s$, such as spectral gap and exponential decay of correlations, are obtained when the transversality condition is valid and $s>u/2$. %Denote $S_{u,d}= \sum_{j=1}^{u+\lceil d/2\rceil +2} \frac{1}{j}$.

\begin{theorem}\label{teo.2} Suppose that $C \in \mathcal{C}_s(d;E)$ for $u/2 < s< r-(\frac{u+d}{2}+1)$ and 
$$
\zeta \in (\max\{\|E^{-1}\|^{\frac{1}{1+\log{(1+\frac{d}{2}+u)}}},(|\det E||\det C|\|C^{-1}\|^{2s})^{\frac{1}{2}}\},1).
$$ 

For any $f \in C^r(\torus,\R^d)$ such that $T$ satisfies the transversality condition, there exists an open set $\mathcal{U} \subset C^r(\torus, \R^d)$ containing $f$ such that for every $\tilde{f}\in\mathcal{U}$ there exists a Banach space $\mathcal{B}$ contained in $H^s(\torus\times \R^d)$ and containing $C^{r-1}(D)$ such that the action of the operator $\cL_{\tilde f}$ in $\mathcal{B}$ has spectral gap with essential spectral radius at most $\zeta$. In particular, $T_{\tilde f}$ has exponential decay of correlations in 
some linear space $\mathcal{\tilde B}$ with exponential rate $\zeta$, where  $\mathcal{\tilde B}$ is contained in $\mathcal{B}$ and contains $C^{r-1}(D)$ .
\end{theorem}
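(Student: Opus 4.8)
The plan is to upgrade the conclusion of Theorem \ref{teo.1} (absolute continuity with density in $H^s$) to a genuine spectral statement for $\cL$ acting on a suitable anisotropic Banach space $\cB$. The heart of the argument is a Lasota-Yorke inequality of the form
\begin{equation}\label{eq.ly.plan}
\|\cL^n \phi\|_{\cB} \leq A\, \zeta^n \|\phi\|_{\cB} + B_n \|\phi\|_{\cB_w},
\end{equation}
where $\|\cdot\|_{\cB}$ is the strong (anisotropic) norm built as in Definition \ref{norm.dagger} from the action of $\phi$ against regular test functions supported on ``almost stable manifolds'' (graphs of the $S_\bc(\cdot,\ba)$), and $\|\cdot\|_{\cB_w}$ is a weaker norm in which the inclusion $\cB \hookrightarrow \cB_w$ is compact (e.g. an $H^{s'}$-type norm with $s' < s$, or an $L^2$-based weak norm as in \cite{gouezel.liverani}). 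First I would fix the test-function classes and the weights: the rate $\zeta$ must dominate both $\|E^{-1}\|^{1/(1+\log(1+d/2+u))}$, which controls the loss coming from distortion of the unstable foliation over $n$ iterates and the combinatorial factor $\tau(q)$ (this is exactly where the transversality condition \eqref{transversality.condition} enters — it forces the number of non-transversal pairs to grow subexponentially, so its $q$-th root contributes nothing to the essential spectral radius), and $(|\det E||\det C|\|C^{-1}\|^{2s})^{1/2}$, which is the ``volume vs.\ contraction'' balance already appearing in the hypothesis $C\in\cC_s(d;E)$. Once \eqref{eq.ly.plan} is established, Hennion's theorem gives that the essential spectral radius of $\cL$ on $\cB$ is at most $\zeta < 1$.

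The next step is to identify the peripheral spectrum. Since $\cL$ preserves total integral and $T$ has a unique SRB measure $\mu_T$ (by \cite{urbansky.wolf}, using transitivity of $T|_\Lambda$), the eigenvalue $1$ is simple with eigenfunction $\phi_T$, and there are no other eigenvalues on the unit circle: this follows from the quasi-compactness together with mixing of $(T,\mu_T)$, which in turn follows from transitivity of the solenoidal attractor plus the fact that $E$ is a linear expanding map of the torus (no nontrivial Jordan blocks or roots of unity appear because the base map is topologically exact on $\torus$ after passing to a power, and the fiber contraction forces any finite-dimensional peripheral eigenspace to be trivial). Hence $\cL$ has a spectral gap on $\cB$: $\cL = \Pi + R$ with $\Pi$ the rank-one projection onto $\C\phi_T$, $\Pi R = R\Pi = 0$, and spectral radius of $R$ strictly below $1$, in fact $\leq \zeta$ after enlarging $\zeta$ infinitesimally if necessary. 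The containment $\cB \subset H^s$ comes from Theorem \ref{teo.1}'s construction (the strong norm dominates an $H^s$ norm by the same duality-with-test-functions estimate), and $C^{r-1}(D) \subset \cB$ because smooth functions pair boundedly against the test functions with the prescribed number of derivatives, using $s < r - (\frac{u+d}{2}+1)$.

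Finally, to get exponential decay of correlations: for observables $\psi_1 \in C^{r-1}(D)$ and $\psi_2 \in L^\infty$ (or in a dual space to $\cB$), write
\begin{equation}
\int (\psi_1\circ T^n)\,\psi_2 \, d\mu_T - \int \psi_1\,d\mu_T \int \psi_2\,d\mu_T = \int \psi_1 \cdot \big(\cL^n(\psi_2 \phi_T) - \phi_T \textstyle\int \psi_2\,d\mu_T\big)\, dx,
\end{equation}
and apply $\cL^n(\psi_2\phi_T) - \Pi(\psi_2\phi_T) = R^n(\psi_2\phi_T)$, whose $\cB$-norm (hence its pairing with $\psi_1$) decays like $\zeta^n$; one takes $\tilde\cB$ to be the set of $\psi$ with $\psi\phi_T \in \cB$, which contains $C^{r-1}(D)$ since $\phi_T \in \cB \cap H^s$ and $\cB$ is stable under multiplication by $C^{r-1}$ functions. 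The openness of the set of admissible $f$ is inherited from Theorem \ref{teo.1}: the transversality condition is an open condition (Definition \ref{transversality.1} uses strict inequalities on compact sets, and $\tau(q)$ is upper semicontinuous in $f$), and all constants $A$, $\zeta$ in \eqref{eq.ly.plan} can be chosen locally uniformly in $f$. The main obstacle I anticipate is proving \eqref{eq.ly.plan} with the sharp constant $\zeta$ as stated — in particular controlling the interaction between the $d$-dimensional transversal overlaps and the $u$-dimensional distortion, which is precisely the content of the ``Main Lasota-Yorke inequalities'' promised for Sections 3--5; the logarithmic term $1+\log(1+\frac{d}{2}+u)$ in the exponent strongly suggests an inductive/bootstrapping scheme over scales where each step loses a controlled polynomial-in-dimension factor, and getting that bookkeeping tight is the crux.
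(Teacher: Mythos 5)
Your overall architecture coincides with the paper's: a Lasota--Yorke (Doeblin--Fortet) inequality for a strong anisotropic norm against a weaker norm with compact embedding, Hennion's theorem for the essential spectral radius, mixing of $(T,\mu_T)$ to pin down the peripheral spectrum, the standard $\cL^n(\psi\phi_T)$ computation for decay of correlations with $\mathcal{\tilde B}=\{\psi:\psi\phi_T\in\cB\}$, and openness in $f$ via the fixed-$q$ quantitative transversality inequality (note, though, that the transversality condition \eqref{transversality.condition} itself is \emph{not} open --- see Remark \ref{openness}; only the finite-$q$ inequality is, which your semicontinuity remark implicitly uses). However, there is a genuine gap in how you build $\cB$ and obtain $\cB\subset H^s$. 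You take the strong norm to be the anisotropic norm of Definition \ref{norm.dagger} and assert that it ``dominates an $H^s$ norm by the same duality-with-test-functions estimate''. This is false: the dagger norm only tests $h$ against smooth functions supported on almost-stable graphs, and indeed $\|h\|^{\dagger}_{\rho_0}\le K\|h\|_{C^{r-1}}$, so it is weaker than, not stronger than, $\|\cdot\|_{H^s}$; its completion need not embed in $H^s$ at all. The paper resolves this by taking the strong norm to be the \emph{sum} $\|\phi\|=\|\phi\|_{H^s}+\|\phi\|^{\dagger}_{\rho_0}$ and proving a second, separate Lasota--Yorke inequality for the Sobolev norm (Proposition \ref{l.norma.sobolev}); that Sobolev inequality is precisely where transversality and $\tau(q)$ enter, paired with the factor $(|\det E||\det C|\fm(C)^{2s})^{-q}$, i.e.\ with the second term in the max defining $\zeta$. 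Your attribution of $\tau(q)$ to the term $\|E^{-1}\|^{1/(1+\log(1+\frac{d}{2}+u))}$ is therefore misplaced: that exponent comes from iterating the dagger-norm inequality across derivative orders $\rho_1+1,\dots,\rho_0$, producing the harmonic sum $\nu=\sum_{j=\rho_1+1}^{\rho_0}1/j\le 1+\log(u+\frac d2+1)$ (Claim \ref{l. norma cruz 3}), not from a multiscale bootstrapping.

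The second missing piece is the compactness step and, with it, the role of the hypothesis $s>u/2$, which you never use. With the paper's choice of weak norm $\|\cdot\|^{\dagger}_{\rho_1}$ (the weak term that the third Lasota--Yorke inequality, Proposition \ref{togheter}, actually produces), compactness of the inclusion $\cB\hookrightarrow\cB'$ is obtained through the chain: the unit ball of $\cB$ is bounded in $H^s$, $H^s\hookrightarrow H^t$ is compact for $t<s$, and $H^t\hookrightarrow(\,\cdot\,,\|\cdot\|^{\dagger}_{\rho_1})$ is continuous. This last embedding is a trace-type estimate (Lemma \ref{continuous}, via \cite[Theorem 7.58]{adams}) which requires $t>\rho_1+\tfrac u2$, and hence $s>\tfrac u2$ even for $\rho_1=0$; this is exactly where the hypothesis $u/2<s$ is consumed. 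Your proposed alternatives for the weak norm ($H^{s'}$ with $s'<s$, or an $L^2$-based norm) do not match the weak term your inequality \eqref{eq.ly.plan} would actually produce, and bounding a dagger norm of order $\rho_0$ by such a Sobolev norm would require more than $s$ derivatives, so the plan as written does not close. These two points --- the two-norm structure $\|\cdot\|_{H^s}+\|\cdot\|^{\dagger}_{\rho_0}$ with its separate Sobolev inequality, and the trace estimate giving compactness --- are the substance of the proof and need to be supplied.
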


\section{Description of the norms $\|\cdot\|^\dagger_\rho$ and $\|\cdot\|_{H^s}$}

In this Section, we define the two main norms that will be used in this work. The Main Inequalities of this paper (Propositions \ref{l.norma cruz}, \ref{l.norma.sobolev} and \ref{togheter}) are stated in terms of these norms.

\subsection{The norm $\|\cdot\|^\dagger_\rho$}\label{ss. set S}

Here we define a norm $\|\cdot\|^\dagger_\rho$ similar to the norms in   \cite{avila.gouezel.tsujii, gouezel.liverani}.

Let $\bc \in I^1$, we define $\mathcal{S}(\bc)$ as the set of $C^r$ transformations $\psi:U_\psi \to \T^u$ such that $U_\psi=\overline{V_\psi}$ for a bounded open set $V_\psi\subset \R^d$, $\psi(U_\psi)  \subset \mathcal{R}_*(\bc)$ %for some $\bc \in \red{I^{p}}$ with $p\ge 1$
 and $\|D^\nu\psi(x)\|\le k_\nu$ for $1\le \nu\le r$, for constants $k_1,\cdots,k_r$ that will be chosen appropriately. We define  $\mathcal{S} = \bigcup_{\bc\in I^{1}}\mathcal{S}(\bc)$

%\red{For $\bc\in I^{p}$ there exist $|\det \sE|^n$ words $\ba\in I^n(\bc)$ such that $\sE(\mathcal{R}(\ba))\supset \mathcal{\mathcal{R}}(\bc)$. So,}
Given $\psi\in \mathcal{S}(\bc)$, we denote by $G_\psi = \{(\psi(x),x)\, |\, x \in U_\psi\}$ the graph of $\psi$. %Since %$\sE^{-n}(\mathcal{R_*}(\bc))$ has exactly $|\det \sE|^n$ connected components,
%  $T^{-n}(G_\psi)$ has exactly $|\det \sE|^n$ connected component.s, %$\ba\in I^n(\bc)$ such that $\sE(\mathcal{R}(\ba))\supset \mathcal{\mathcal{R}}(\bc)$,
%there exist $|\det \sE|^n$ backward images $(\tilde{G}_\psi)_\ba$ of $G_\psi$, $\ba\in I^n(\bc)$. %, such that $G_\psi=T^n((\tilde{G}_\psi)_i)$.
 For each $\ba \in I^{n}(\bc)$, we denote $(\tilde{G}_\psi)_\ba$ the unique connected component of $T^{-n}(G_\psi)$ which is contained in $\mathcal{R_*}(\ba)\times \R^d$. Moreover, the constants $k_1,\cdots,k_r$ will be chosen such that each set $(\tilde{G}_\psi)_\ba$ is the graph of a transformation $\psi_\ba:U_{\psi_\ba} \to \T^u$ such that $\psi_\ba \in \mathcal{S}$.

Note that $T^n$ is locally written in the form
\begin{equation}
T^n(x,y)=(E^n x, C^n y + S_{\bc,\ba}^n(E^n x) ),
\end{equation}
where $S_{\bc,\ba}^n(z)=\sum_{j=0}^{n-1}C^j f(E_{\bc,\ba}^{-j-1}z)$ is a $C^r$ function with $\|D^j S_{\bc,\ba}^n\|\leq \alpha_0$, $1 \leq j \leq r$.
%\begin{claim}

Given $\psi \in \mathcal{S}(\bc)$ and $(\tilde{G}_\psi)_\ba$, $\ba \in I^n(\bc)$, %such that $(\tilde{G}_\psi)_i\subset \mathcal{R}_*(\ba)$ and $G_\psi\subset \mathcal{R}_*(\bc)$,
 the inverse branch $T_{\bc,\ba}^{-n}$ is written as
\begin{equation}
T_{\bc,\ba}^{-n} (x,y) = \left((E_{\bc,\ba})^{-n}(x), C^{-n}(y-S^n_{\bc,\ba}(x)) \right).
\end{equation}

%$satisfies $T_{\bc,\ba}^{-n}(G_\psi)=\tilde{(G_\psi))}_i.$

Consider the $C^r$ diffeomorphism $g_\ba:U_\psi \to U_{\psi_\ba}$ such that
$T^n(\psi_\ba \circ g_\ba(y),g_\ba(y))=(\psi(y),y)$
for all $y\in U_\psi$. We have $\psi_\ba(y)=E_{\bc,\ba}^{-n} \psi(g_\ba^{-1}(y) )$, $(\tilde{G}_\psi)_\ba=G_{\psi_\ba}$ and $\psi_\ba \in \mathcal{S}$, where the $g_\ba$'s are given by

\begin{equation}\label{g}
g_\ba(y)=C^{-n}(y - S^n_{\bc,\ba}(\psi(y)).
\end{equation}

A useful estimate for the map $g_\ba$ is given in the following.

\begin{claim}\label{claim Qa} The map $g_\ba$ is a $C^r$ diffeomorphism and there exists a $C^{r-1}$ map $Q_\ba: U_{\psi_\ba} \to L(\R^d,\R^d)$ such that $Dg_\ba^{-1}(z)=Q_\ba(z)C^{n}$. Moreover, $\|Q_\ba\|_{C^{r-1}}\le K$ for some constant $K$ depending only $\alpha_0$, $k_1, \dots, k_r$. In particular,
	$$
	\|D^j g_\ba^{-1}(z)\|\le K \|C\|^{n} \quad \text{and} \quad |D^{j-1}\det{Dg_\ba^{-1}(z)}|\le K |\det C|^{n}$$
	for every $z\in U_{\psi_\ba}$ and $1\le j \le r$.
\end{claim}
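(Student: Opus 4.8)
The plan is to exploit the factorization hidden in \eqref{g}. Writing $\phi_\ba:=\mathrm{id}_{\R^d}-S^n_{\bc,\ba}\circ\psi$, we have $g_\ba=C^{-n}\circ\phi_\ba$, hence $g_\ba^{-1}=\phi_\ba^{-1}\circ C^n$, and everything reduces to controlling $\phi_\ba$ with bounds uniform in $n$ (and in $\bc,\ba$). Since $D\phi_\ba(y)=I-DS^n_{\bc,\ba}(\psi(y))\,D\psi(y)$, while $\|DS^n_{\bc,\ba}\|_{C^0}\le\alpha_0$ and $\|D\psi\|_{C^0}\le k_1$, and since the constants $k_\nu$ defining $\mathcal S$ are chosen so that $\alpha_0 k_1<1$, the map $\phi_\ba$ is a perturbation of the identity with $\|[D\phi_\ba(y)]^{-1}\|\le(1-\alpha_0 k_1)^{-1}$ for every $y$; a quantitative inverse function argument (or simply the fact, already recorded above, that $g_\ba$ is a $C^r$ diffeomorphism) then shows that $\phi_\ba$, hence $g_\ba$, is a $C^r$ diffeomorphism onto its image, with $g_\ba^{-1}=\phi_\ba^{-1}\circ C^n$.

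First I would differentiate this factorization: the chain rule gives $Dg_\ba^{-1}(z)=D\phi_\ba^{-1}(C^n z)\,C^n$, so I set
\[
Q_\ba(z):=D\phi_\ba^{-1}(C^n z)=\bigl[D\phi_\ba\bigl(g_\ba^{-1}(z)\bigr)\bigr]^{-1},
\]
which yields $Dg_\ba^{-1}(z)=Q_\ba(z)\,C^n$ at once. Because $S^n_{\bc,\ba}\in C^r$ and $\psi\in C^r$, the matrix-valued map $y\mapsto D\phi_\ba(y)$ is of class $C^{r-1}$; as matrix inversion is smooth on $GL_d(\R)$ and $g_\ba^{-1}\in C^r$, the map $Q_\ba$ is $C^{r-1}$. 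This gives the first two assertions.

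The core is the bound $\|Q_\ba\|_{C^{r-1}}\le K$, which I would prove in two steps. \emph{Step 1:} the matrix-valued function $y\mapsto[D\phi_\ba(y)]^{-1}$ has $C^{r-1}$-norm bounded by a constant depending only on $\alpha_0,k_1,\dots,k_r$ (and $r,d$), uniformly in $n,\bc,\ba$. Indeed, the general Leibniz and higher chain rules, together with the uniform bounds $\|D^jS^n_{\bc,\ba}\|\le\alpha_0$ for $1\le j\le r$ and $\|D^\nu\psi\|\le k_\nu$ for $1\le\nu\le r$, give $\|D\phi_\ba\|_{C^{r-1}}\le K_1(\alpha_0,k_1,\dots,k_r)$; then the algebraic identity expressing $D^m(A^{-1})$ as a universal polynomial in $A^{-1},DA,\dots,D^mA$, combined with $\|[D\phi_\ba]^{-1}\|_{C^0}\le(1-\alpha_0 k_1)^{-1}$, yields $\|[D\phi_\ba]^{-1}\|_{C^{r-1}}\le K_2$. \emph{Step 2:} control the derivatives of $h:=g_\ba^{-1}$. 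From $Dh(z)=\bigl([D\phi_\ba]^{-1}\circ h\bigr)(z)\,C^n$ and Step 1 one has $\|Dh\|_{C^0}\le K_2\|C\|^n$; differentiating this identity repeatedly (the factor $C^n$ being a constant linear map) and using the higher chain rule, an induction on $1\le j\le r$ shows $\|D^jh(z)\|\le K_3\|C\|^n$ for every $z$ — the point being that each term produced by the differentiation carries at least one factor $\|C^n\|\le\|C\|^n<1$, so the estimates stay uniform in $n$. In particular $\|D^ih\|_{C^0}\le K_3$ for $i\le r-1$, and then the chain rule applied to $Q_\ba=([D\phi_\ba]^{-1})\circ h$, using Step 1, gives $\|Q_\ba\|_{C^{r-1}}\le K$.

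The two displayed inequalities then follow formally, and this is where I expect no difficulty: for $1\le j\le r$ we have $D^jg_\ba^{-1}(z)=D^{j-1}Q_\ba(z)\cdot C^n$ (again $C^n$ is constant), hence $\|D^jg_\ba^{-1}(z)\|\le\|Q_\ba\|_{C^{r-1}}\|C\|^n\le K\|C\|^n$; and $\det Dg_\ba^{-1}(z)=\det Q_\ba(z)\,(\det C)^n$, where $\det Q_\ba$ is a degree-$d$ polynomial in the entries of the $C^{r-1}$ map $Q_\ba$, hence $C^{r-1}$ with norm bounded in terms of $K$ and $d$, so $|D^{j-1}\det Dg_\ba^{-1}(z)|=|D^{j-1}\det Q_\ba(z)|\,|\det C|^n\le K'|\det C|^n$; enlarging $K$ concludes. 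The main obstacle is the bookkeeping in Steps 1 and 2 — keeping the constants from the Leibniz/chain-rule expansions and from the matrix-inverse formula under control independently of $n,\bc,\ba$, and verifying that in the end exactly one power of $\|C\|^n$ (respectively $|\det C|^n$) survives. Uniformity in $n$ rests entirely on the previously recorded bounds $\|D^jS^n_{\bc,\ba}\|\le\alpha_0$, while the appearance of $\|C\|^n$ is structural, stemming from the factorization $g_\ba^{-1}=\phi_\ba^{-1}\circ C^n$.
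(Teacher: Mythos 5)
Your proposal is correct and follows essentially the same route as the paper: your $Q_\ba(z)=[D\phi_\ba(g_\ba^{-1}(z))]^{-1}$ is exactly the paper's $Q_\ba$, which it writes as the Neumann series $\sum_{k\ge 0}\bigl(DS^n_{\bc,\ba}(\psi(g_\ba^{-1}(z)))D\psi(g_\ba^{-1}(z))\bigr)^k$, and the diffeomorphism argument via $\alpha_0 k_1<1$ is the same. You merely spell out the uniform $C^{r-1}$ bookkeeping (derivatives of the matrix inverse and of $g_\ba^{-1}$) that the paper leaves implicit.
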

\begin{proof} The map   $g_\ba$ is one-to-one because
	 $g_\ba(y)=g_\ba(z)$ implies $y-z=S^n_{\bc,\ba}(\psi(y))- S^n_{\bc,\ba}(\psi(z))$. But the estimates  $\|DS^n_{\bc,\ba}\|\le \alpha_0$
	and $\|D\psi\|\le c_1<\alpha_0^{-1}$ implies that $y=z$.
% $\|y-z\|\le\alpha_0 c_1\|y-z\|$, which implies $y=z$ since $\alpha_0 c_1<1$.
	The expression $Dg_\ba(y)=\sC^{-n}\big(I-DS_{\bc,\ba}^n(\psi(y))D\psi(y)\big)$ implies that $Dg_\ba(y)$ is invertible for every $y\in U_\psi$ due to  $\|DS^n_{\bc,\ba}(\psi(y))D\psi(y)\|\le\alpha_0c_1<1$. This proves that $g_\ba$ is a $C^r$ diffeomorphism.
	
	%Now, %using Chain Role we have, 
	For every $z\in U_{\psi_\ba}$ %and $y=g_\ba^{-1}(z)$ 
		we have:
	\begin{align*}
	Dg_{\ba}^{-1} (z) &= (I-DS_{\bc,\ba}^n(\psi(z))D\psi(z))^{-1} C^n    \\
	&=\sum_{k=0}^\infty (DS^n_{\bc,\ba}(\psi( z ))D\psi(  z  ))^k C^n.
	\end{align*}
	
	The result follows
	taking $Q_\ba(z)=\sum_{k=0}^\infty (DS^n_{\bc,\ba}(\psi((g_{\ba})^{-1}(z)))D\psi(g_\ba^{-1}(z)))^k$. % and noticing the uniform boundedness of the terms in $\alpha_0, \kappa_1,\cdots, \kappa_r$.

%	This implies that	
%	$$
%	D^j g_{\ba}^{-1}(z) = D^{j-1}\big(\sum_{k=0}^\infty (DS^n_{\bc,\ba}(\psi((g_{\ba})^{-1}(z)))D\psi(g_\ba^{-1}(z)))^k\big)C^n
%	$$
%	and
%	$$
%	D^{j-1}\det Dg_{\ba}^{-1}(z) = D^{j-1}\big(\sum_{k=0}^\infty [\det\big(DS^n_{\bc,\ba}(\psi((g_{\ba})^{-1}(z)))D\psi(g_\ba^{-1}(z))\big)]^k\big)(\det C)^n.
%	$$
%	
%	Further, as we have %the expression $g_{\ba}(y)= C^{-n}(y- S_{\bc,\ba}^n(\psi(y)) )$ and
%	the uniform bounds:
%	\begin{equation}
%	\|D^j S_{\bc,\ba}^n \| \leq \alpha_0 \quad \text{ and } \quad \|D^j \psi \|\leq c_j \quad \text{ for every } 1 \leq j \leq r
%	\end{equation}
%
%	we obtain
%	$$
%	\|D^j g_{\ba}^{-1} \| \leq K \|C\|^n
%	$$
%	and
%	$$
%	|D^{j-1} \det Dg_{\ba}^{-1} | \leq K |\det C|^n
%	$$
\end{proof}

%\end{claim}
%\begin{proof}
%\end{proof}

Let us fix the cone field
 \begin{equation}
 \mathcal{C}=\{(u,v) \in T_{(x,y)}\torus\times\R^d\,|\, \|u\|\le \alpha_0^{-1}\|v\|\},
 \end{equation}
which is invariant under $(DT^{-1})_{(x,y)}$ for every $(x,y) \in \torus\times \R^d$.

We suppose that $k_1 \leq \alpha_0^{-1}/2$ and, if necessary, we increase the constants $k_2, \cdots, k_r >0$ in order that the following is valid:
%\red{if $\psi$ is an affine map than all $\psi_\ba$ are in $\mathcal{S}(\bc)$. Equivalently,}
if $\sigma$ is a $u$-dimensional ball contained in a $u$-dimensional plane of $\T^u\times \R^d$ and $\Gamma$ is a connected component of $T^{-q}(\sigma)$ such that its tangent vectors are all in $\mathcal{C}$, then $\Gamma$ is the graphic of an element of $\mathcal{S}$.

For $h\in C^r(D)$ and multi-indexes $\alpha=(\alpha_1,\cdots,\alpha_u)$ and $\beta=(\beta_1,\cdots,\beta_d)$, $|\alpha|+ |\beta| \leq r$,  we denote
\begin{equation}
\partial^\alpha_x\partial^\beta_y h = \frac{ \partial^{|\alpha|+|\beta|} h }{\partial_{x_1}^{\alpha_1}\cdots\partial_{x_u}^{\alpha_u} \partial_{y_1}^{\beta_1} \cdots \partial_{y_d}^{\beta_d}}.
\end{equation}

\begin{definition}\label{norm.dagger}
For $h\in C^r(D)$ and an integer $0\le \rho \le r-1$, we define
\begin{equation}\label{norm}
\|h\|_\rho^{\dagger}=\max_{|\alpha|+|\beta|\le \rho}\sup_{\psi\in \cS}\sup_{\phi\in \cC^{|\alpha|+|\beta|}(U_\psi)}\int\phi(y).\partial_x^{\alpha}\partial_y^{\beta}h(\psi(y),y)\,dy
\end{equation}
where the first supremum is taken over functions $\phi$ with
$\supp(\phi)\subset \Int(U_\psi)$ and $\|\phi\|_{C^{|\alpha|+|\beta|}}\le 1.$
\end{definition}

Clearly,  $\|h\|^{\dagger}_\rho$ is a norm that satisfies:
\begin{equation}\label{eq. normal1normacruz0}
\|h\|_{L^1} \leq \|h\|^\dagger_0
\quad \text{ and } \quad
\|h\|^\dagger_{\rho-1} \leq \|h\|^\dagger_\rho.
\end{equation}

%\red{Also there exists a constant $K$ depending on $\rho$ such that}
%\begin{equation}\label{eq. banach}
%\red{\|h_1h_2\|^{\dagger}_{\rho}\le K \,\|h_1\|^{\dagger}_{\rho}\,\|h_2\|^{\dagger}_{\rho}}
%\end{equation}
%\red{for all $h_1,h_2 \in C^{r}(D)$.}

The first main Lasota-Yorke inequality is similar to the ones in \cite{avila.gouezel.tsujii, gouezel.liverani}:

\begin{proposition}[First Main Lasota-Yorke (for $\|\cdot\|^\dagger$)]\label{l.norma cruz}
For any $\delta\in (\|E^{-1}\|, 1)$, there exist constants $K$ and $K(n)$ such that
\begin{equation}\label{primeira.estimativa}
\|\cL^nh\|_{\rho}^{\dagger}\le K\delta^{\rho n}\|h\|_{\rho}^{\dagger}+K(n)\|h\|_{\rho-1}^{\dagger} \quad \text{for $1\le \rho \le r-1$},
\end{equation}
and
\begin{equation}\label{segunda.estimativa}
\|\cL^nh\|_0^{\dagger}\le K\|h\|_0^{\dagger}
\end{equation}
for $n\ge 0$ and $h\in C^r(D)$, where $K(n)$ depends on $n$ but not on $h$.
\end{proposition}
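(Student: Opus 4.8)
The plan is to estimate the action of $\cL^n$ on the $\dagger$-norm by unfolding the definition: for a multi-index with $|\alpha|+|\beta|=\rho$, a surface $\psi\in\cS(\bc)$ and a test function $\phi$ with $\|\phi\|_{C^\rho}\le1$, I must control $\int \phi(y)\,\partial_x^\alpha\partial_y^\beta(\cL^n h)(\psi(y),y)\,dy$. First I would write $\cL^n h$ as a sum over admissible inverse branches, $\cL^n h(x,y)=\sum_{\ba\in I^n(\bc)} |\det DT^n(T^{-n}_{\bc,\ba}(x,y))|^{-1} h(T^{-n}_{\bc,\ba}(x,y))$, using the block-triangular form $T^n_{\bc,\ba}(x,y)=(E^n x, C^n y + S^n_{\bc,\ba}(E^n x))$ so that $|\det DT^n|=N^n|\det C|^n$ is constant. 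Restricting to the graph of $\psi$ and performing the change of variables $y=g_\ba(z)$ (Claim \ref{claim Qa}) turns each term into an integral over $U_{\psi_\ba}$ against the pulled-back test function $\phi\circ g_\ba\cdot|\det Dg_\ba^{-1}|$ and the derivatives $\partial_x^\alpha\partial_y^\beta h$ evaluated on the graph of $\psi_\ba\in\cS$.

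The heart of the matter is the chain rule: $\partial_x^\alpha\partial_y^\beta\big(h(\psi(y),y)\big)$ expanded via Faà di Bruno produces, besides the ``leading'' term where all $\rho$ derivatives hit $h$ in the best possible directions (this is the term whose coefficient I want to be $\delta^{\rho n}$), a collection of ``lower-order'' terms in which at least one derivative is spent on the maps $E^{-n}_{\bc,\ba}$, $C^{-n}$, $S^n_{\bc,\ba}$ or on $\psi_\ba$. Because the $y$-derivatives of the inverse branch contract like $\|C^{-n}\|\le K\|C\|^n\to 0$ while $x$-derivatives contract like $\|E^{-n}\|^{|\alpha|}\le\delta^{|\alpha|n}$, and using \eqref{alpha_0} together with Claim \ref{claim Qa} to bound all derivatives of $g_\ba$, $g_\ba^{-1}$ and $\det Dg_\ba^{-1}$ by constants times $\|C\|^{jn}$ resp. $|\det C|^n$, each fully-leading term contributes at most $K\,\delta^{\rho n}\cdot N^{-n}|\det C|^{-n}\cdot|\det C|^n=K\delta^{\rho n}N^{-n}$ per branch; summing the $\le N^n$ branches gives the $K\delta^{\rho n}\|h\|_\rho^\dagger$ term (the test function $\phi\circ g_\ba\cdot|\det Dg_\ba^{-1}|$ being admissible up to a bounded constant absorbed into $K$). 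Every non-leading term has strictly fewer derivatives falling on $h$, hence is bounded by $\|h\|_{\rho-1}^\dagger$ times a constant $K(n)$ that may blow up in $n$ (through factors like $\|E^n\|$ coming from differentiating $S^n_{\bc,\ba}(E^n\cdot)$) but is independent of $h$; this yields the $K(n)\|h\|_{\rho-1}^\dagger$ term. For $\rho=0$ there is no derivative to distribute, the change of variables alone gives $\|\cL^n h\|_0^\dagger\le K\|h\|_0^\dagger$ with $K$ controlling the distortion of $\phi\mapsto\phi\circ g_\ba\cdot|\det Dg_\ba^{-1}|$, which is \eqref{segunda.estimativa}.

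The main obstacle I anticipate is bookkeeping the combinatorics of the chain rule so as to confirm that the coefficient of the single genuinely leading term is exactly a constant multiple of $\delta^{\rho n}$ — in particular verifying that for the $x$-derivatives the relevant contraction is $\|E^{-n}\|^{|\alpha|}$ and choosing $\delta\in(\|E^{-1}\|,1)$ to absorb the fixed-order polynomial corrections, and that the pulled-back test functions genuinely lie (after rescaling by a branch-independent constant) in the unit ball of $C^\rho(U_{\psi_\ba})$, so that their integrals against $\partial_x^\alpha\partial_y^\beta h$ on $G_{\psi_\ba}$ are each bounded by $\|h\|_\rho^\dagger$. A secondary technical point is ensuring the constants $k_1,\dots,k_r$ were fixed (as promised before Definition \ref{norm.dagger}) so that every $\psi_\ba$ indeed belongs to $\cS$ and the cone invariance is preserved under $T^{-n}$, which is what legitimizes writing $\cL^n h$ restricted to $G_\psi$ as a finite sum of graph-integrals of the required type.
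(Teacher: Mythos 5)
There is a genuine gap, and it sits exactly at the point you call the ``heart of the matter''. You assert that along an inverse branch the $y$-derivatives ``contract like $\|C^{-n}\|\le K\|C\|^n\to 0$''. This reverses the fiber dynamics: since $\|C\|<1$, the inverse branch $T^{-n}_{\bc,\ba}(x,y)=(E^{-n}_{\bc,\ba}x,\,C^{-n}(y-S^n_{\bc,\ba}(x)))$ \emph{expands} in the $y$-direction, with $\|C^{-n}\|\ge\|C\|^{-n}\to\infty$ (Claim \ref{claim Qa} bounds the derivatives of $g_\ba^{-1}$ by $K\|C\|^n$, not those of $g_\ba$, which are of order $\|C^{-n}\|$). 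Consequently, in the chain-rule expansion of $\partial_x^\alpha\partial_y^\beta(\cL^n h)$ the term in which $b\ge 1$ of the $\rho$ derivatives fall on $h$ in the $y$-variables carries a factor comparable to $\|E^{-n}\|^{|a|}\,\|C^{-n}\|^{|b|}$ (this is exactly the weight $\prod_l\|(C^{-n})^*\epsilon_l\|^{b_l}$ appearing in Claim \ref{derivative}), which blows up exponentially. Your ``leading term'' estimate $K\delta^{\rho n}N^{-n}$ per branch is therefore only valid when all $\rho$ derivatives on $h$ are $x$-derivatives; as written, the argument does not yield $K\delta^{\rho n}\|h\|_\rho^\dagger$ and the proof of \eqref{primeira.estimativa} collapses (your treatment of \eqref{segunda.estimativa}, where no derivatives occur, is fine).

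The missing idea is how the paper neutralizes these expanding fiber derivatives: after restricting to the graph of $\psi_{i_0}=\psi_\ba$ and pairing with the test function, each $(\partial_{y_l}h)(\psi_{i_0}(y'),y')$ is rewritten as a \emph{tangential} derivative $\partial_{y_l}[\,\cdots h(\psi_{i_0}(y'),y')\,]$ (which, after integrating by parts onto the test function, is absorbed into $K(n)\|h\|^\dagger_{\rho-1}$) plus $x$-derivative terms whose coefficients are entries of $D\psi_{i_0}=E^{-n}D\psi(g_\ba^{-1})Q_\ba C^n$; the factor $C^n$ there cancels the $C^{-n}$ coming from the transfer operator, leaving products $\|(C^{-n})^*\epsilon_l\|\,\|C^n\epsilon_l\|$ times $\|E^{-n}\|^{\rho}$. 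Even then these products are only subexponential, not bounded, which is why the paper first reduces to the case where $C$ is in Jordan canonical form (Claim \ref{jordan.form} and \eqref{log}): for a general $C$ with eigenvalues of different moduli, $\|C^{-n}\|\,\|C^n\|$ grows exponentially and the cancellation would fail. Neither the graph/integration-by-parts mechanism nor the Jordan-form reduction appears in your proposal, and without them the estimate \eqref{primeira.estimativa} cannot be reached along the route you describe.
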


Proposition \ref{l.norma cruz} is proved in Section 4.1.

\subsection{The Sobolev norm $\|\cdot\|_{H^s}$}

Let us remind some facts about the Fourier transform and the  Sobolev norm that shall be used further.

Given $\phi \in C^r(D)$, we define $\hat{\phi}: \Z^u\times \R^d \to \mathbb{C}$ by
\begin{equation}
\hat{\phi}(\xi,\eta) = \int_{\torus\times\R^d} \phi(x,y) e^{- 2\pi i (\langle \xi, x \rangle + \langle \eta, y \rangle )} dxdy
\end{equation}

The Sobolev norm of is defined by $\|\phi\|_{H^s} = \sqrt{\langle \phi, \phi \rangle_{H^s}}$, where
\begin{equation}
\langle \phi_1, \phi_2 \rangle_{H^s} := \sum_{\eta \in \Z^u} \int_{R^d} \hat{\phi}_1(\eta,\xi) \overline{\hat{\phi}_2(\eta,\xi)} (1+|\xi|^2+|\eta|^2)^s d\eta,
\end{equation}
and the Sobolev space $H^s$ is the completion of $C^r(D)$ with respect to this norm. This norm comes from the inner product

%The Fourier transform has the property:
%$$\hat{\partial_{x_j}\phi}(\xi,\eta)= 2\pi i \xi_j \hat{\phi}(\xi,\eta) \quad \text{ and } \quad \Fc(\partial_{y_k}\phi)(\xi,\eta)= 2\pi   i \eta_k \Fc \phi(\xi,\eta) .$$

An equivalent definition is given by the $L^2$ norm of the derivatives.
For multi-indexes $\alpha=(\alpha_1,\cdots,\alpha_u)$ and $\beta=(\beta_1,\cdots,\beta_d)$, we denote
%$\partial^\alpha_x h$ the  derivative of $h$ of order $|\alpha|=\alpha_1+\cdots+\cdots+\alpha_u$ differentiating $\alpha_j$ times with respect to $x_j$, and $\partial^\beta_y$ analogously. For
 $\sigma=(\alpha,\beta)$ and $\partial^\sigma_z h = \partial^\alpha_x\partial^\beta_y h$.
If $s$ is a non-negative integer with $r\geq s$ and $\phi_1,\phi_2 \in C^r(D)$, we define the inner product
\begin{equation}\label{def.sobolev1}
\langle\phi_1, \phi_2 \rangle_{\tilde H^s} = \sum_{|\sigma|\leq s} \langle \partial_z^\gamma \phi_1 , \partial^\gamma_z \phi_2 \rangle_{L^2}.
\end{equation}

If $s$ is not integer, we define $\delta = s - \lfloor s \rfloor \in(0,1)$ and
\begin{equation}\label{def.sobolev2}
\langle\phi_1, \phi_2 \rangle_{\tilde H^s} = \sum_{|\sigma|\leq \lfloor s \rfloor} \langle \partial_z^\gamma \phi_1 , \partial^\gamma_z \phi_2 \rangle_{L^2} +\sum_{|\sigma|=\lfloor s \rfloor} \underset{\R^u\times \R^d}\int \,\,\underset{\R^u\times \R^d}{\int}
\Phi^\sigma(x,y,v,w)dv dw \,\,
dxdy,
\end{equation}
where
\begin{equation*}
\Phi^\sigma(x,y,v,w)
=\frac{ (\partial^{\sigma}\phi_1(x+v,y+w)-\partial^{\sigma}\phi_1(x,y))(\overline{\partial^{\sigma}\phi_2(x+v,y+w)-\partial^{\sigma}\phi_2(x,y)})}{ (|v|^2+|w|^2)^{\frac{u+d}{2} +\delta}} 
\end{equation*} 
is defined considering the extension of $\phi_j$ to $\R^u \times \R^d$ as zero if $(x,y) \notin [0,1]^u \times \R^d \sim \torus \times \R^d$.
This inner product induces the norm $
\|\phi\|^2_{\tilde H^s} = \langle \phi, \phi\rangle_{\tilde H^s}.$
It is a standard fact that these norms are equivalent (see \cite[page 241]{hormander}), that is,
	there exists a constant $K>0$ such that
	\begin{equation}
	\frac{1}{K}\|\phi \|^2_{\tilde H^s} \leq
   \|\phi\|^2_{H^s} 
   \leq K \|\phi\|^2_{\tilde H^s}.
	\end{equation}

\begin{remark}
Through this paper we will introduce several constants $K>0$ depending only on the objects that were fixed before, for simplicity we will keep denoting them as $K$. In the cases that the constant depends on other objects that are not fixed, we will emphasize this dependence. %The constant that will be enumerated $K_1$ is one that will be refered along the continuation of the text.

\end{remark}

\begin{claim}\label{ineq.sobolev.l1}
	For $0 \leq t < s \leq r$ and $\epsilon>0$, there is a constant $K(\epsilon, t, s)$ such that
	\begin{equation}
	\|\phi\|^2_{H^t} \leq \epsilon \|\phi\|^2_{H^s} + K(\epsilon, t, s) \|\phi\|^2_{L^1}
	\end{equation}
	for every $\phi \in C^r(D)$.
\end{claim}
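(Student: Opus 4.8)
\textbf{Proof proposal for Claim \ref{ineq.sobolev.l1}.}

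The plan is to work on the Fourier side and exploit the fact that low frequencies are controlled by the $L^1$ norm while high frequencies are absorbed by a small multiple of $\|\phi\|_{H^s}^2$. Concretely, write $\|\phi\|_{H^t}^2 = \sum_{\eta\in\Z^u}\int_{\R^d} |\hat\phi(\eta,\xi)|^2 (1+|\xi|^2+|\eta|^2)^t\,d\xi$ and split the region of integration into $\{(\eta,\xi): 1+|\xi|^2+|\eta|^2 \le R\}$ and its complement, for a threshold $R=R(\epsilon,t,s)$ to be chosen at the end. On the high-frequency part, $1+|\xi|^2+|\eta|^2 > R$, we use $(1+|\xi|^2+|\eta|^2)^t = (1+|\xi|^2+|\eta|^2)^{t-s}(1+|\xi|^2+|\eta|^2)^s \le R^{t-s}(1+|\xi|^2+|\eta|^2)^s$ since $t-s<0$; summing/integrating gives a contribution bounded by $R^{t-s}\|\phi\|_{H^s}^2$, which is $\le \epsilon\|\phi\|_{H^s}^2$ once $R$ is large enough that $R^{s-t}\ge 1/\epsilon$.

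For the low-frequency part, the key observation is that $\hat\phi$ is bounded pointwise by $\|\phi\|_{L^1}$: indeed $|\hat\phi(\eta,\xi)| \le \int_{\torus\times\R^d}|\phi(x,y)|\,dxdy = \|\phi\|_{L^1}$ directly from the definition of the Fourier transform, since $|e^{-2\pi i(\langle\eta,x\rangle+\langle\xi,y\rangle)}|=1$. Hence the low-frequency contribution is at most $\|\phi\|_{L^1}^2$ times $\sum_{\eta}\int_{\R^d} \mathbf{1}_{\{1+|\xi|^2+|\eta|^2\le R\}}(1+|\xi|^2+|\eta|^2)^t\,d\xi$, and this last quantity is a finite constant depending only on $R$, $t$, $u$, $d$ — the sum over $\eta$ has finitely many nonzero terms (those with $|\eta|^2\le R-1$) and for each the $\xi$-integral is over the bounded set $|\xi|^2\le R-1$ of a continuous function. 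Call this constant $K(\epsilon,t,s)$ (it depends on $\epsilon$ through $R$). Adding the two pieces yields the claimed inequality.

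There is essentially no serious obstacle here; the only point requiring a little care is bookkeeping with the \emph{equivalence} of the two Sobolev norms stated in the excerpt, so that the estimate proved for $\|\cdot\|_{H^s}$ (the Fourier definition) and the one we want for $\|\cdot\|_{H^t}$ are consistent — but since the statement is phrased in terms of $\|\cdot\|_{H^s}$ and $\|\cdot\|_{H^t}$, working entirely with the Fourier-transform definition is cleanest and avoids this issue altogether. One should also note that the argument as written uses $s\le r$ only to ensure $\hat\phi$ decays fast enough for all the integrals to converge for $\phi\in C^r(D)$, which holds since $\phi$ is compactly supported and $C^r$; the inequality then extends to the completion $H^s$ by density if desired, though the statement only asks for $\phi\in C^r(D)$.
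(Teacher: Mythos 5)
Your proof is correct and follows essentially the same route as the paper: both work entirely on the Fourier side and hinge on the pointwise bound $|\hat\phi(\xi,\eta)|\le\|\phi\|_{L^1}$, with the high frequencies absorbed into $\epsilon\|\phi\|_{H^s}^2$ and the low frequencies controlled by $\|\phi\|_{L^1}^2$. The only cosmetic difference is that you split the weight by a sharp cutoff at level $R(\epsilon)$, whereas the paper performs the split pointwise via Young's inequality, producing the integrable weight $(1+|\xi|^2+|\eta|^2)^{-u-d}$ instead of a bounded frequency region; both give the claim with the same structure of constants.
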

\begin{proof}
Choose $1<p<+\infty$ such that $(t-\frac{s}{p})(\frac{p}{p-1})\le -(u+d)$ and use the Young's inequality to obtain (putting $t=\frac{s}{p}+t-\frac{s}{p}$ and recall $1/p+1/q=1$ with $q=p/(p-1)$)
\begin{align*}
(1+|\xi|^2+|\eta|^2)^t&=  (1+|\xi|^2+|\eta|^2)^{\frac{s}{p}}(1+|\xi|^2+|\eta|^2)^{t-\frac{s}{p}}\\
                      &\le \epsilon (1+|\xi|^2+|\eta|^2)^s + \tilde K(\epsilon,t,s)(1+|\xi|^2+|\eta|^2)^{(t-\frac{s}{p})(\frac{p}{p-1})}\\
                      &\le \epsilon (1+|\xi|^2+|\eta|^2)^s + \tilde K(\epsilon,t,s)(1+|\xi|^2+|\eta|^2)^{-u-d}.
\end{align*}

So we have
	\begin{align*}
	&\|\phi\|^2_{H^t}=\sum_{\xi\in\Z^u}\int_{\R^d}|\hat{\phi}(\xi,\eta)|^2(1+|\xi|^2+|\eta|^2)^t \,d\eta\\
                    &\le\sum_{\xi\in\Z^u}\int_{\R^d}\epsilon|\hat{\phi}(\xi,\eta)|^2(1+|\xi|^2+|\eta|^2)^s +|\hat{\phi}(\xi,\eta)|^2\tilde K(\epsilon,t,s)(1+|\xi|^2+|\eta|^2)^{-u-d}\,d\eta\\
                    &\leq \epsilon \|\phi\|^2_{H^s} + K(\epsilon,t,s) \|\hat{\phi}\|^2_{L^\infty}
                    \leq \epsilon \|\phi\|^2_{H^s} + K(\epsilon,t,s) \|\phi\|^2_{L^1}.
	\end{align*}

\end{proof}

\begin{remark}\label{composition}
	Given a multi-index $\sigma$, %there exist constants $\theta_{\alpha',\gamma}$ such that
	for every $f:D\to \R$ and $g:\torus\times \R^d \to \torus \times \R^d$ infinitely many times  differentiable, we have
	\begin{equation}\label{eq.cadeia}
	\partial^\sigma(f\circ g)(x) = \sum_{1\le|\sigma'|\leq |\sigma|} \partial^{\sigma'}f(g(x)) \cdot Q_{\sigma,\sigma'}(g;x),
	\end{equation}
	where $Q_{\sigma,\sigma'}(g;\cdot)$ is a homogeneous polynomial of degree $|\sigma'|$ in the  derivatives of $g_1, \dots, g_{u+d}$ until order $|\sigma|-|\sigma'|+1$.

As a consequence, given $F:U\subset\torus\times \R^d \to F(U) \subset\torus\times \R^d$ of class $C^r$ and $u: U\subset\torus\times \R^d\to \R$ a function in $H^s$ for some $s\le r$  supported in $F(U)$,
	 there exists a constant $K=K(F)$ depending on $F$ and its derivatives up to order $\lfloor s \rfloor$ such that
	 \begin{equation}\label{eq.normacomp}
	 \|u\circ F \|_{H^s} \leq K(F) \|u\|_{H^s}
	 \end{equation}
%\end{remark}
\begin{proof}
	The formula for the  derivative of the  composition in \eqref{eq.cadeia} can be seen in \cite{fraenkel} % is known as Faa di Bruno's formula 
and the estimate in \eqref{eq.normacomp} is an immediate consequence     using the expressions for $\|\cdot\|_{\tilde H^s}$.
\end{proof}
\end{remark}

When $\phi_1$ and $\phi_2$ have disjoint supports, then we have an  estimative for $\langle \phi_1, \phi_2 \rangle$.

\begin{claim}\label{support}
	For $\epsilon>0$, there exists a constant $K(\epsilon, s)$ such that
	\begin{equation}
	|\langle \phi_1,\phi_2 \rangle_{\tilde H^s} | \leq K(\epsilon,s) \|\phi_1\|_{L^1} \|\phi_2\|_{L^1}
	\end{equation}
	for every $\phi_1, \phi_2 \in C^r(D)$ whose support are disjoints and the distance between them is greater than $\epsilon$.
\end{claim}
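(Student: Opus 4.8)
The plan is to split the Sobolev inner product into the finitely many terms appearing in \eqref{def.sobolev1} or \eqref{def.sobolev2} and estimate each one using the separation of supports. First consider the integer part: for each multi-index $\sigma$ with $|\sigma| \le \lfloor s \rfloor$, the term $\langle \partial_z^\sigma \phi_1, \partial_z^\sigma \phi_2\rangle_{L^2}$ vanishes identically, since $\partial_z^\sigma \phi_1$ is supported where $\phi_1$ is and $\partial_z^\sigma \phi_2$ where $\phi_2$ is, and these supports are disjoint. Hence the only contribution comes from the fractional (Gagliardo) part when $s$ is not an integer, namely the double integral of $\Phi^\sigma(x,y,v,w)$ over $\R^u\times\R^d\times\R^u\times\R^d$ for $|\sigma| = \lfloor s\rfloor$.

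For the fractional part, expand the numerator of $\Phi^\sigma$ into four products; the two ``diagonal'' products $\partial^\sigma\phi_1(x+v,y+w)\overline{\partial^\sigma\phi_2(x+v,y+w)}$ and $\partial^\sigma\phi_1(x,y)\overline{\partial^\sigma\phi_2(x,y)}$ again vanish because of disjoint supports evaluated at the same point. We are left with the two cross terms, for which one argument lands in $\supp\phi_1$ and the other in $\supp\phi_2$; since these sets are at distance greater than $\epsilon$, whenever both factors are nonzero we must have $(|v|^2+|w|^2)^{1/2} > \epsilon$, so the denominator $(|v|^2+|w|^2)^{(u+d)/2+\delta}$ is bounded below by a positive constant depending only on $\epsilon$ and $s$ on the relevant region, \emph{but} the region of integration in $(v,w)$ is not compact, so we cannot simply pull the denominator out. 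Instead, for the cross term involving $\partial^\sigma\phi_1(x+v,y+w)\overline{\partial^\sigma\phi_2(x,y)}$, change variables $(x',y') = (x+v,y+w)$ and integrate in $(x,y,x',y')$; the kernel becomes $(|x'-x|^2+|y'-y|^2)^{-(u+d)/2-\delta}$, which over the set $\{|x'-x|^2+|y'-y|^2 > \epsilon^2\}$ is integrable in, say, the $(x,y)$ variable uniformly (the tail $\int_{|z|>\epsilon} |z|^{-(u+d)-2\delta}\,dz$ converges since $2\delta>0$), so by Fubini and the Cauchy--Schwarz/H\"older argument one bounds the cross term by $K(\epsilon,s)\,\|\partial^\sigma\phi_1\|_{L^1}\|\partial^\sigma\phi_2\|_{L^1}$.

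The remaining point is to replace $\|\partial^\sigma\phi_j\|_{L^1}$ by $\|\phi_j\|_{L^1}$: this is \emph{not} true in general for $C^r$ functions, so the statement as written must be read as allowing the constant to depend on more, or — more likely in the paper's intended usage — the claim is applied to functions $\phi_j$ that are iterates $\cL^n h$ localized on pieces where a bound of the form $\|\partial^\sigma\phi_j\|_{L^1}\le K\|\phi_j\|_{L^1}$ is available; alternatively one integrates by parts to move all $2\lfloor s\rfloor$ derivatives off one factor onto the (smooth, compactly supported, bounded-derivative) kernel and the other factor, at the cost of a constant $K(\epsilon,s)$, reducing everything to $\|\phi_1\|_{L^1}\|\phi_2\|_{L^1}$. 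I would carry out the integration-by-parts version: in the cross term, integrate by parts $|\sigma|$ times in the $(x',y')$ variables to remove the derivative from $\phi_1$, picking up derivatives of the kernel (still bounded on $|z|>\epsilon$ with all derivatives bounded by $K(\epsilon,s)$) and then do the same in $(x,y)$ for $\phi_2$. The main obstacle is precisely this last reduction — controlling the boundary terms in the integration by parts and verifying the kernel derivatives remain integrable at infinity — everything else is bookkeeping with disjoint supports.
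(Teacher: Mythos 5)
Your final integration-by-parts route is essentially the paper's proof: after the $L^2$ terms and the diagonal products are discarded by disjointness of supports, the paper integrates by parts $2\lfloor s\rfloor$ times to move all derivatives onto the kernel, producing $B(v,w)\,(|v|^2+|w|^2)^{-\frac{u+d}{2}-\delta-2\lfloor s\rfloor}$ with $B$ a polynomial of degree $2\lfloor s\rfloor$, and concludes because the integrand vanishes when $|v|^2+|w|^2\le\epsilon^2$. The obstacles you flag are non-issues: the boundary terms vanish since $\phi_1$ is compactly supported and the integrand vanishes near the kernel's singularity, and one only needs boundedness (not integrability) of the differentiated kernel on $\{|v|^2+|w|^2>\epsilon^2\}$, because $\int\!\!\int |\phi_1(x+v,y+w)|\,|\phi_2(x,y)|\,dv\,dw\,dx\,dy=\|\phi_1\|_{L^1}\|\phi_2\|_{L^1}$.
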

\begin{proof}
	If $s$ is integer, by \eqref{def.sobolev1} the inner product is $0$.
	
	If $s$ is not integer then we use \eqref{def.sobolev2}, the disjointness of the  supports and change of variables to obtain  
%	noticing that integrands vanish when $|w|\leq \epsilon/2$ and write:
	\begin{align*}
	\langle \phi_1,\phi_2\rangle_{\tilde H^s} =
	-2 \sum_{|\sigma|\le \lfloor s \rfloor}\int_{\R^u \times \R^d}\int_{\R^u \times \R^d} \Theta^\sigma(x,y,v,w)dv dw \,\,
	dxdy,
	\end{align*}
where
$$
\Theta^\sigma(x,y,v,w)
=\frac{ \partial^{\sigma}\phi_1(x+v,y+w)\overline{\partial^{\sigma}\phi_2(x,y)}}{ (|v|^2+|w|^2)^{\frac{u+d}{2} +\delta}}. 
$$

Integrating by parts $\lfloor s \rfloor$ times in $(v,w)$ according to each index in $\sigma$, changing variables and integrating by parts again $\lfloor s \rfloor$ times, we obtain:
	\begin{equation*}
	\langle \partial^\sigma\phi_1,\partial^\sigma\phi_2\rangle_{\tilde H^s} =\int_{\R^u \times \R^d} \int_{\R^u \times \R^d} 
	\frac{ \phi_1(x+v,y+w)\overline{\phi_2(x,y)}B(v,w)}{ (|v|^2+|w|^2)^{\frac{u+d}{2} +\delta+2\lfloor s \rfloor}} dvdw \,\,dxdy
%	 \frac{\phi_1(z+w)\overline{\phi_2(z)} B(w)}{|w|^{n+2\delta+2|\sigma|}} 
,
	\end{equation*}
where $B(v,w)$ is a polynomial of order $2\lfloor s \rfloor$.
	The proof follows noticing that the integrand vanish if $|v|^2+|w|^2\le \epsilon^2.$

\end{proof}

\begin{claim}\label{interpolacao}
Given   $0\leq s_0 \leq s_1$, a linear operator $L:H^{s_0}  \to H^{s_0}$ such that  $L(H^{s_1}) \subset H^{s_1}$     and constants  $A_0, A_1$  such that:
$$\| L(u) \|_{H^{s_0}} \leq A_0 \|u \|_{H^{s_0}} \quad \text{ and } \quad \| L(u) \|_{H^{s_1}} \leq A_1 \|u \|_{H^{s_1}}.$$

Then $L(H^{s_\theta} )  \subset H^{s_\theta}$ for $s_\theta=(1-\theta)s_0+\theta s_1$, $\theta \in [0,1]$, and
\begin{equation}\label{interpolation}
\| L(u) \|_{H^{s_\theta}} \leq A_0^{1-\theta}A_1^\theta \|u \|_{H^{s_\theta}}.
\end{equation}
\end{claim}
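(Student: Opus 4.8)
The statement is the complex‑interpolation theorem for the Hilbert scale $\{H^t\}_{t\in\R}$, and I would prove it by applying Hadamard's three‑lines theorem to a suitable analytic family built from the Fourier description of the norms. The cases $\theta\in\{0,1\}$ are trivial, so fix $\theta\in(0,1)$ and $u\in H^{s_\theta}$; since $s_\theta\ge s_0$ we have $H^{s_\theta}\subset H^{s_0}$, so $L(u)\in H^{s_0}$ is defined, and it is enough to establish the norm bound in \eqref{interpolation}, which in particular forces $L(u)\in H^{s_\theta}$. Write $w_t(\xi,\eta)=(1+|\xi|^2+|\eta|^2)^t$, so that $\|\phi\|_{H^t}^2=\sum_{\xi\in\Z^u}\int_{\R^d}w_t\,|\hat\phi|^2$ (we use this formula to define $H^t$ also for $t<0$); thus the Fourier transform identifies $H^t$ isometrically with a weighted $L^2$ space on $\Z^u\times\R^d$, and Parseval holds for the $L^2$ pairing.

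For $z$ in the closed strip $S=\{0\le\Re z\le 1\}$ set $s_z=(1-z)s_0+zs_1$ and let $u_z$ be the function with $\widehat{u_z}=w_{(\theta-z)(s_1-s_0)/2}\,\hat u$. Bookkeeping with the weights gives: $u_\theta=u$; for every $z\in S$ one has $u_z\in H^{s_0}$ with $\|u_z\|_{H^{s_0}}\le\|u\|_{H^{s_\theta}}$ (so $L(u_z)$ is defined), with equality on the line $\Re z=0$; and on the line $\Re z=1$ one has $u_z\in H^{s_1}$ with $\|u_z\|_{H^{s_1}}=\|u\|_{H^{s_\theta}}$. Fix $R>0$, put $N_R^2=\sum_{\xi}\int_{|(\xi,\eta)|\le R}w_{s_\theta}\,|\widehat{L(u)}|^2$, let $g=w_{s_\theta}\,\overline{\widehat{L(u)}}\,\mathbf 1_{|(\xi,\eta)|\le R}$ and $g_z=w_{(z-\theta)(s_1-s_0)/2}\,g$, and define
\[
F(z)=\sum_{\xi\in\Z^u}\int_{\R^d}\widehat{L(u_z)}(\xi,\eta)\,g_z(\xi,\eta)\,d\eta .
\]
Here one uses the bilinear pairing $(f,h)\mapsto\sum_\xi\int fh$, not the $L^2$ inner product, precisely so that $F$ is holomorphic (not merely real‑analytic) in $z$; the two agree at $z=\theta$ because $g$ equals $w_{s_\theta}\overline{\widehat{L(u)}}$ on the truncation.

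Since $g$ is supported in a bounded set, $z\mapsto g_z\in L^2$ is entire with $\|g_z\|_{L^2}$ bounded on $S$; since $z\mapsto u_z\in H^{s_0}$ is holomorphic on the open strip and continuous on $S$ — the difference quotients converge in $H^{s_0}$ by dominated convergence, using $u\in H^{s_\theta}$ to absorb the extra logarithmic factor in the interior and plain continuity of $z\mapsto w_{(\theta-z)(s_1-s_0)/2}$ on the boundary — the same holds for $z\mapsto\widehat{L(u_z)}\in L^2$ after composing with the bounded operator $L$ and the Fourier transform. Hence $F$ is holomorphic on the open strip, continuous and bounded on $S$, and $F(\theta)=\langle L(u),v_R\rangle_{L^2}=N_R^2$, where $\widehat{v_R}=w_{s_\theta}\widehat{L(u)}\mathbf 1$. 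On $\Re z=0$, the weighted Cauchy–Schwarz inequality together with $\|L(u_z)\|_{H^{s_0}}\le A_0\|u_z\|_{H^{s_0}}$ gives $|F(z)|\le A_0\,\|u_z\|_{H^{s_0}}\,\big(\sum_\xi\int w_{-s_0}|g_z|^2\big)^{1/2}=A_0\,\|u\|_{H^{s_\theta}}\,N_R$, using the two weight identities valid on that line; on $\Re z=1$ the analogous estimate with $\|L(u_z)\|_{H^{s_1}}\le A_1\|u_z\|_{H^{s_1}}$ gives $|F(z)|\le A_1\,\|u\|_{H^{s_\theta}}\,N_R$. Hadamard's three‑lines theorem then yields $N_R^2=|F(\theta)|\le A_0^{1-\theta}A_1^{\theta}\,\|u\|_{H^{s_\theta}}\,N_R$, hence $N_R\le A_0^{1-\theta}A_1^{\theta}\|u\|_{H^{s_\theta}}$; letting $R\to\infty$ and using monotone convergence gives \eqref{interpolation} together with $L(u)\in H^{s_\theta}$.

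The only points requiring genuine care are the two technical ones: (i) verifying that $z\mapsto u_z$ is a holomorphic $H^{s_0}$‑valued map on the open strip and continuous up to its closure — this is exactly where the strict inequality of weight‑exponents in the open strip combines with $u\in H^{s_\theta}$, the boundary lines yielding only continuity; and (ii) arranging the weights so that the correct endpoint hypothesis applies on each boundary line, which is why the family $u_z$ is designed to land in $H^{s_0}$ on $\Re z=0$ and in $H^{s_1}$ on $\Re z=1$. The truncation $R$ plays no essential role beyond ensuring that $g$ lies in every Sobolev space and that $F$ is finite. Alternatively, one may simply invoke the identification $[H^{s_0},H^{s_1}]_\theta=H^{s_\theta}$ of complex interpolation spaces, with equality of norms, together with the interpolation theorem for operators bounded at both endpoints of a complex interpolation couple.
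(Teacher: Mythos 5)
Your proof is correct, but it takes a different route from the paper: the paper disposes of this claim in one line by citing Theorem 22.3 of Tartar's book (i.e.\ it invokes the known identification of the Sobolev scale as an exact interpolation scale, which is exactly the alternative you mention in your closing sentence), whereas you give a self-contained proof via Hadamard's three-lines theorem applied to the analytic family $u_z$ with $\widehat{u_z}=w_{(\theta-z)(s_1-s_0)/2}\,\hat u$ — the classical Stein/Riesz–Thorin-type argument specialized to the Hilbert scale $H^t$. Your weight bookkeeping is right: the exponent $s_0+(\theta-\Re z)(s_1-s_0)$ is $\le s_\theta$ on the closed strip with equality giving $\|u_z\|_{H^{s_0}}=\|u\|_{H^{s_\theta}}$ on $\Re z=0$ and $\|u_z\|_{H^{s_1}}=\|u\|_{H^{s_\theta}}$ on $\Re z=1$, the dual weights in the truncated pairing reproduce exactly the factor $N_R$ on both lines, the bilinear (non-conjugated) pairing and the compact truncation make $F$ holomorphic, bounded and finite, and the logarithmic factor in the difference quotients is absorbed precisely because the exponent is strictly below $s_\theta$ in the open strip (the degenerate case $s_0=s_1$ and the case $N_R=0$ being trivial). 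What each approach buys: the citation is the economical choice and is all the paper needs, since only the qualitative bound with constant $A_0^{1-\theta}A_1^{\theta}$ enters Lemma \ref{change.of.variables}; your argument is longer but self-contained, produces the sharp multiplicative constant directly, and makes transparent why the Fourier-weight definition of $\|\cdot\|_{H^s}$ used in the paper is exactly what makes $\{H^s\}$ an interpolation scale.
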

\begin{proof}
%	See \cite[Theorem 22.3]{tartar}.
This corresponds to Theorem 22.3 in \cite{tartar}.
\end{proof}

The second main Lasota-Yorke Inequality of this work corresponds to the following.

\begin{proposition}[Second Main Lasota-Yorke (for Sobolev norm)]\label{l.norma.sobolev} There exist a constant $B_1$, independent of $q$, and $K(q)$ such that for every $\phi\in C^r(D)$ and every integer $\rho_0$ with $s+\frac{u+d}{2} < \rho_0 \leq r-1$, we have
\begin{equation}
\|\mathcal{L}^q \phi\|_{H^s}^2 \leq B_1 \frac{\tau(q)}{|\det E|^q |\det C|^{q}\fm(C)^{2sq}} \|\phi\|^2_{H^s} + K(q) \| \phi \|_{H^s} \|\phi\|^\dagger_{\rho_0}.
\end{equation}
\end{proposition}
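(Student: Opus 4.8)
The plan is to unfold $\mathcal{L}^q$ as a sum over inverse branches and then estimate the resulting Sobolev norm by splitting the square of the sum into ``diagonal'' and ``off-diagonal'' contributions, with the latter controlled by the transversality count $\tau(q)$. First I would recall that, using the parametrization of the iterated inverse branches introduced in Section~2, one has
\begin{equation}
\mathcal{L}^q\phi(x,y) = \sum_{\ba\in I^q(\pi(x))} \frac{\phi\big(T^{-q}_{\bc,\ba}(x,y)\big)}{|\det DT^q(T^{-q}_{\bc,\ba}(x,y))|},
\end{equation}
where $|\det DT^q| = |\det E|^q|\det C|^q$ is constant. Each summand is (up to the constant Jacobian) a function of the form $\phi\circ T^{-q}_{\bc,\ba}$, supported in a neighbourhood of the graph $(\tilde G_\psi)_\ba$, and I would estimate its $H^s$ norm using Remark \ref{composition}: the key gain is that $T^{-q}_{\bc,\ba}$ contracts the $x$-directions by $\sim E^{-q}$ and the $y$-directions by $C^{-q}$, so the change-of-variables constant $K(T^{-q}_{\bc,\ba})$ in \eqref{eq.normacomp}, when combined with the Jacobian factor, produces a factor comparable to $|\det E|^{-q}|\det C|^{-q}\fm(C)^{-2sq}$ for the \emph{diagonal} terms $\langle \phi\circ T^{-q}_{\bc,\ba},\ \phi\circ T^{-q}_{\bc,\ba}\rangle_{H^s}$ (the $\fm(C)^{-2sq}$ coming from the $s$ derivatives along the strongly contracted stable directions). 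This is where the exponent $\fm(C)^{2sq}=\|C^{-1}\|^{-2sq}$ in the statement comes from.

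Next I would handle the cross terms $\langle \phi\circ T^{-q}_{\bc,\ba},\ \phi\circ T^{-q}_{\bc,\bb}\rangle_{H^s}$ with $\ba\neq\bb$. These split into two cases. If $\ba$ is transversal to $\bb$ on $\bc$ in the sense of Definition \ref{transversality.1}, then the two graphs $(\tilde G_\psi)_\ba$ and $(\tilde G_\psi)_\bb$ cross with a definite angle, so on the region where both summands are nonzero one can change coordinates to make one graph flat and exploit the transversality to integrate by parts / use the oscillatory decay of the Sobolev pairing of transversally-supported pieces; this yields a contribution that is summably small (bounded by $K(q)$ times a lower-order $\dagger$-norm, using that $\phi$ restricted to these almost-stable graphs is controlled by $\|\phi\|^\dagger_{\rho_0}$, which is why the hypothesis $\rho_0 > s+\frac{u+d}{2}$ is needed — it buys enough regularity of the trace on $d$-dimensional graphs to dominate the $H^s$ trace via a Sobolev trace/embedding estimate). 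If instead $\ba$ is \emph{not} transversal to $\bb$, I estimate the cross term trivially by the geometric mean of the two diagonal terms (Cauchy--Schwarz), each of which carries the factor $|\det E|^{-q}|\det C|^{-q}\fm(C)^{-2sq}$; the number of such bad pairs $\bb$ for a fixed $\ba$ is at most $\tau(q)$ by definition, so summing over $\ba$ and the $\le\tau(q)$ bad partners gives the main term $B_1\,\tau(q)\,|\det E|^{-q}|\det C|^{-q}\fm(C)^{-2sq}\|\phi\|_{H^s}^2$.

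Putting the two pieces together, the diagonal plus the transversal off-diagonal terms are absorbed into $B_1\tau(q)(\cdots)\|\phi\|_{H^s}^2 + K(q)\|\phi\|_{H^s}\|\phi\|^\dagger_{\rho_0}$ (the $K(q)$ term also swallowing the finitely many ``nearby but not-yet-separated'' branches and the error from localizing with a partition of unity subordinate to the rectangles $\mathcal{R}_*(\bc)$), and the non-transversal terms give the stated main term. The main obstacle I expect is the transversal cross-term estimate: making the ``transversality implies smallness of the Sobolev pairing'' step precise requires a careful local model — flattening one graph, writing the other as a graph over it with uniformly non-degenerate relative derivative $\ge 3\theta^q\alpha_0$, and then bounding a singular integral of the form $\int\int \partial^\sigma(\phi\circ T^{-q}_{\bc,\ba})(\cdots)\,\overline{\partial^\sigma(\phi\circ T^{-q}_{\bc,\bb})}(\cdots)/(|v|^2+|w|^2)^{(u+d)/2+\delta}$ — and controlling how the constant in that bound depends on $q$ (it may grow, but only polynomially or at worst like a fixed power, so that it can be folded into $K(q)$ without spoiling the $\tau(q)$ main term). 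Keeping the constant $B_1$ genuinely independent of $q$, as claimed, is the delicate point and forces one to isolate all $q$-dependence into the $K(q)\|\phi\|_{H^s}\|\phi\|^\dagger_{\rho_0}$ term via the lower-order $\dagger$-norm.
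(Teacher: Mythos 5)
Your global architecture matches the paper's: localize with a partition of unity subordinate to $\{\mathcal{R}_*(\bc\ba)\}$ (Lemma \ref{partition.unity}), expand $\|\cL^q\phi\|_{H^s}^2$ into pairwise terms $\langle\cL^q(\chi_{\bc\ba}\phi),\cL^q(\chi_{\bc\bb}\phi)\rangle_{H^s}$, treat non-transversal pairs by Cauchy--Schwarz plus the change-of-variables bound $\|\cL^q(\chi_{\bc\ba}\phi)\|^2_{H^s}\le K\,(|\det E||\det C|\fm(C)^{2s})^{-q}\|\chi_{\bc\ba}\phi\|^2_{H^s}$ (Lemma \ref{change.of.variables}), and count the bad partners by $\tau(q)$; your accounting of where the factor $\fm(C)^{2sq}$ comes from is correct, and the diagonal terms are indeed absorbed since $\ba$ is never transversal to itself.

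The gap is in the transversal cross terms, precisely the step you flag as the unresolved ``main obstacle''. Smallness of $\langle\cL^q(\chi_{\bc\ba}\phi),\cL^q(\chi_{\bc\bb}\phi)\rangle_{H^s}$ cannot come from the geometry of the supports alone: both pushforwards are supported on full-dimensional slabs over $\mathcal{R}_*(\bc)$ which genuinely overlap, so a real-space singular-integral estimate after ``flattening one graph'' has no evident source of decay (two characteristic functions of crossing slabs already have a pairing of definite size); the decay must come from directional regularity of each pushforward, quantified frequency by frequency. The paper's mechanism is exactly this: Lemma \ref{transversality.lemma} shows that transversality of $\ba,\bb$ on $\bc$ forces, for every $(\xi,\eta)\neq 0$, at least one of the two branches to satisfy $(DT^q_x)^*(\xi,\eta)\in\cC_1^*$ on its whole rectangle, and Lemma \ref{estimar.transf.vs.l1} converts this dual-cone condition into the pointwise bound $|\cF(\cL^q(\chi_{\bc\ba}\phi))(\xi,\eta)|\le K(\chi,q)(1+|\xi|^2+|\eta|^2)^{-\rho_0/2}\|\phi\|^\dagger_{\rho_0}$, obtained by disintegrating along the $d$-dimensional planes orthogonal to a $u$-subspace $W\subset\cC_1^*$, whose $T^{-q}$-preimages are graphs of elements of $\mathcal{S}$ --- this is exactly where the $\dagger$-norm enters, rather than through a trace theorem. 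Splitting frequency space into the two corresponding regions and applying Cauchy--Schwarz in each then yields the bound $K(q)\|\phi\|^\dagger_{\rho_0}\|\phi\|_{H^s}$, the hypothesis $\rho_0>s+\frac{u+d}{2}$ being used only to make $\sum_{\xi}\int(1+|\xi|^2+|\eta|^2)^{s-\rho_0}\,d\eta$ finite. Without this frequency dichotomy, or an equivalent quantitative substitute, your treatment of the transversal pairs does not close, so the proposal as written is incomplete at its central point.
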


Proposition \ref{l.norma.sobolev} is proved in Section 4.2.

\section{Proof of the Main Inequalities}

\subsection{First Lasota-Yorke (for $\|\cdot\|^\dagger$)}

\begin{proof}[Proof of Proposition \ref{l.norma cruz}] We will prove supposing that $C$ is in the Jordan canonical form. In particular $\R^{d}=E_1\oplus E_2 \oplus \dots \oplus E_k$, where the $E_j$'s are subspaces generated by vectors of the canonical basis, each $E_j$ is invariant by $C$ and $C_{|E_j}$ has all eigenvalues with the same absolute value $\lambda_j>0$.

\begin{claim}\label{jordan.form}
	It is enough to prove Lemma \ref{l.norma cruz} supposing that $C$ is in the Jordan canonical form.
\end{claim}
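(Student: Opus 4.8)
The plan is to reduce the general case to the Jordan-form case by means of a linear change of coordinates in the fiber $\R^d$. Concretely, write $C = P J P^{-1}$ with $J$ in Jordan canonical form, and let $H:\torus\times\R^d\to\torus\times\R^d$ be the linear fiber-conjugacy $H(x,y)=(x,P^{-1}y)$. Then $\tilde T := H\circ T\circ H^{-1}$ has the form $\tilde T(x,y)=(\sE(x),Jy+\tilde f(x))$ with $\tilde f = P^{-1}f\in C^r(\torus,\R^d)$, so $\tilde T = T(\sE,J,\tilde f)$ is again a map in our class, now with contracting part in Jordan form. The attractor, SRB measure, and transfer operator all transform naturally under $H$: one has $\cL_{\tilde T} = (H^{-1})^*\circ\cL_T\circ H^*$ up to the constant Jacobian factor $|\det P|$, so $\cL_{\tilde T}^n\psi = (\cL_T^n(\psi\circ H))\circ H^{-1}$ for the appropriately normalized operators (the constant Jacobian of the linear map $H$ cancels in the ratio defining $\cL$, since $|\det DT| $ and $|\det D\tilde T|$ differ only by the constant $|\det P|^{?}$ which is absorbed — here one checks the bookkeeping carefully).

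The key point is that the norm $\|\cdot\|^\dagger_\rho$ is comparable under the conjugacy $H$. First I would verify that $H$ maps the admissible graph class $\cS$ (for $T$) into a comparable class for $\tilde T$: if $\psi\in\cS(\bc)$ is a $C^r$ graph over a domain in $\R^d$ with $\|D^\nu\psi\|\le k_\nu$, then the corresponding graph for $\tilde T$ is $x\mapsto \psi(P\,\cdot)$ composed with the identity in the base, whose derivatives are bounded by $\|P\|^\nu k_\nu$; enlarging the constants $k_1,\dots,k_r$ (which the construction permits) absorbs this. Likewise the cone field $\mathcal C$ is mapped to a comparable cone, and the test functions $\phi$ with $\|\phi\|_{C^{|\alpha|+|\beta|}}\le1$ are mapped to test functions with a comparable bound. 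Since in \eqref{norm} one takes a supremum over $\psi\in\cS$ and over such $\phi$, and a change of variables $y\mapsto Py$ in the defining integral only introduces the constant Jacobian $|\det P|$ and rescales derivatives by powers of $\|P\|$ and $\|P^{-1}\|$, one obtains
$$
\tfrac{1}{K}\,\|h\|^\dagger_\rho \;\le\; \|h\circ H^{-1}\|^\dagger_{\rho,\tilde T}\;\le\; K\,\|h\|^\dagger_\rho
$$
for a constant $K=K(P,\rho)$, where $\|\cdot\|^\dagger_{\rho,\tilde T}$ denotes the dagger norm built from $\tilde T$'s data. Finally, the constants $\delta\in(\|E^{-1}\|,1)$ and the expansion data $\|E^{-1}\|$ are unchanged by $H$ since $H$ acts as the identity in the base; and $\alpha_0$ changes only by a factor controlled by $\|P\|,\|P^{-1}\|$.

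Granting the Jordan-form case of Proposition \ref{l.norma cruz} applied to $\tilde T$, one then transports the inequality back: for $h\in C^r(D)$,
$$
\|\cL^n_T h\|^\dagger_\rho \le K\,\|\cL^n_{\tilde T}(h\circ H^{-1})\|^\dagger_{\rho,\tilde T}
\le K\big(\tilde K\delta^{\rho n}\|h\circ H^{-1}\|^\dagger_{\rho,\tilde T}+\tilde K(n)\|h\circ H^{-1}\|^\dagger_{\rho-1,\tilde T}\big)
\le K'\delta^{\rho n}\|h\|^\dagger_\rho+K'(n)\|h\|^\dagger_{\rho-1},
$$
and similarly for the $\rho=0$ estimate, which is exactly \eqref{primeira.estimativa}–\eqref{segunda.estimativa} for $T$. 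The main obstacle I anticipate is purely bookkeeping: making sure the constant Jacobian of $H$ is correctly tracked through the definition of $\cL$ (so that it genuinely cancels and does not leave a spurious $|\det P|^n$ factor), and checking that the enlargement of $k_1,\dots,k_r$ needed to keep $\cS$ invariant is still compatible with the later-imposed requirement $k_1\le\alpha_0^{-1}/2$ and with the cone-invariance condition — but since those constants are at our disposal throughout the construction, this causes no real difficulty.
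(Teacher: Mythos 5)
Your proposal is correct and follows essentially the same route as the paper: conjugating by the fiber-linear map $(x,y)\mapsto(x,Py)$, using the exact intertwining of the transfer operators, and transporting the Lasota--Yorke inequality through a two-sided comparison of the anisotropic norms (the paper phrases this by defining a transported norm $\|\cdot\|^{\dagger\dagger}_\rho$ and showing that $h\mapsto h\circ\mathcal{P}^{-1}$ is a bounded isomorphism, whereas you compare directly with the dagger norm built from the conjugated map's enlarged graph class -- a cosmetic difference). The bookkeeping you flag does work out: $\det(PCP^{-1})=\det C$, so $|\det D\tilde T|=|\det DT|$ and no factor of $|\det P|$ survives in the relation $\mathcal{L}_{\tilde T}(h\circ H^{-1})=(\mathcal{L}_T h)\circ H^{-1}$.
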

\begin{proof}
	Consider $P: \R^d \to \R^d$ be an invertible linear operator and consider the transformation $\tilde{T}: \T^u \times \R^d \to \T^u \times \R^d$ given
	by $\tilde{T}(x,y)=(Ex, PCP^{-1}y+Pf(x))$ and the associated Perron-Frobenius operator
	$$
	\mathcal{\tilde L}\tilde h(x,y)=(|\det E||\det C|)^{-1}\sum_{\tilde{T}(x',y')=(x,y)}\tilde{h}(x',y').
	$$
	
	Notice that the transformations $T$ and $\tilde T$ are linear conjugated by $\mathcal{P}: \T^u \times \R^d \to \T^u \times \R^d$, $\mathcal{P}(x,y)=(x,Py)$, that is $\mathcal{P}\circ T=\tilde{T}\circ \mathcal{P}$. Moreover, defining $\tilde{D}=\mathcal{P}(D)$ and, for $\tilde{h}\in C^{r}_0(\tilde{D})$, the norm
	\begin{equation}
	\|\tilde h\|_\rho^{\dagger\dagger}=\max_{0\le |\alpha|+|\beta|\le \rho}\sup_{\psi\in \mathcal{S}}\sup_{\phi\in C^{|\alpha|+|\beta|}(\psi)}\int (\phi\circ P^{-1})(y)\partial^{\alpha}_x\partial^{\beta}_y\tilde{h}(\psi\circ P^{-1}(y),y)\,dy,
	\end{equation}
	then the operator $\mathcal{U}:(C^{r}_0(D),\|.\|^{\dagger}_\rho) \to (C^{r}_0(\tilde D),\|.\|^{\dagger\dagger}_\rho)$ given by
	$\mathcal{U}(h)=h\circ \mathcal{P}^{-1}$ is a bounded isomorphism, that is, there is a constant $B>0$ such that $\|\mathcal{U}(h)\|_{\rho}^{\dagger\dagger}\le B\|h\|_{\rho}^{\dagger}$ and $\|\mathcal{U}^{-1}(\tilde h)\|_{\rho}^{\dagger}\le B\|\tilde h\|_{\rho}^{\dagger\dagger}$.	
	Clearly, it is valid that
$	\mathcal{U}\circ \mathcal{L}=\mathcal{\tilde L}\circ \mathcal{U}$.
	
	So if for some constants $a\ge 0$ and $b\ge 0$ we have
	\begin{equation}
	\|\tilde{\cL}^n\tilde h\|_{\rho}^{\dagger\dagger}\le a\|\tilde h\|_{\rho}^{\dagger\dagger}+b\|\tilde h\|_{\rho-1}^{\dagger\dagger},
	\end{equation}
	then
	\begin{equation}
	\|\cL^n h\|_{\rho}^{\dagger}\le aB^2\| h\|_{\rho}^{\dagger}+bB^2\|h\|_{\rho-1}^{\dagger}.
	\end{equation}

\end{proof}
	In the rest of this proof, we suppose  that $C:\R^d \to \R^d$ is in the Jordan form. Notice that $E_i \perp E_j$ for $i\neq j$ and therefore all $E_j$ are invariants by $C^*$ and $C^{*}_{|E_j}$ has all eigenvalues with the same absolute value $\lambda_j>0$.
	In these conditions for any canonical vector $\epsilon_l$ we have
\begin{equation}
	\lim_{n \to \pm\infty}\frac{1}{n}\log\|C^{n}\epsilon_l\|=\lim_{n \to \pm\infty}\frac{1}{n}\log\|(C^*)^{n}\epsilon_l\|=\log\lambda_l
\end{equation}
	and, in particular,
\begin{equation}\label{log}
	\lim_{n \to \pm\infty}\frac{1}{n}\log(\|C^{-n}\epsilon_l\|\|(C^*)^{n}\epsilon_l\|)=0.
\end{equation}

Denote $\{e_1, \dots, e_u\}$ the canonical basis of $\R^u$ and $\{\epsilon_1, \dots, \epsilon_d\}$ the canonical basis of $\R^d$. 
 We have the following formula for the derivatives of  $\mathcal{L}^nh(x,y)$.% = \frac{1}{|\det E|^n |\det C|^n} \sum_{(x',y') \in T^{-n}(x,y)} h(x',y')$.

\begin{claim}\label{derivative} If $1\le |\alpha|+|\beta|=\rho\le r-1$, then
\begin{equation}
\partial_x^{\alpha} \partial_{y}^{\beta}(\mathcal{L}^n h)(x,y) =  \sum_{(x',y') \in T^{-n}(x,y)} \sum_{|a|+|b| \leq \rho} \frac{ \partial_x^{a} \partial_{y_1}^{b_1}\cdots \partial_{y_d}^{b_d} h (x',y') \cdot Q_{\alpha,\beta,a,b,n}(x)}{(|\det E| |\det C|)^{n}  \cdot \|E^{-n}\|^{-|a|} \prod_{l=1}^{d}\|(C^{-n})^* \epsilon_{l}\|^{-b_l} },
\end{equation}
where $\beta=(b_1,\dots, b_l)$ and the functions $Q_{\alpha,\beta,a,b,n}$ are of class $C^{r-1-|\alpha|-|\beta|+|a|+|b|}$ %(in particular of class $C^{|a|+|b|}$)
 and there exists a constant $K$ such that $\|Q_{\alpha,\beta,a,b,n}\|_{C^{|a|+|b|}} \leq K$ for every $n \geq 0$, $\alpha$, $\beta$, $a$ and $b$ with $|a|+|b| \le |\alpha|+|\beta| \leq \rho \leq r-1$.
\end{claim}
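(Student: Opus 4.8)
The plan is to induct on $n$, reducing the general case to the one-step formula $n=1$ and then composing. First I would establish the base case $n=1$: since $T$ is a local diffeomorphism with finitely many inverse branches $T^{-1}_{y,x}$, one has $\cL h(x,y)=\sum_{T(x',y')=(x,y)} h(x',y')/|\det DT(x',y')|$ with $|\det DT|=|\det E||\det C|$ constant. Differentiating through the finite sum, applying the Faà di Bruno / chain-rule formula from Remark \ref{composition} (equation \eqref{eq.cadeia}) to each term $h\circ T^{-1}_{y,x}$, and recording that the inverse branch has the block-triangular form $T^{-1}_{\bc,\ba}(x,y)=(E^{-1}_{\bc,\ba}x,\,C^{-1}(y-S(x)))$ with $S$ of class $C^r$ and bounded derivatives (estimate \eqref{alpha_0}), one gets exactly the stated shape: the pure $x$-derivatives pull out a factor $\|E^{-1}\|^{|a|}$ up to a bounded $C^{r-1}$ multiplier, and the $y$-derivatives pull out $C^{-1}$ acting coordinatewise, which one rewrites in the $\prod_l\|(C^{-1})^*\epsilon_l\|$ normalization using that $C$ is in Jordan form (so the $E_j$ are orthogonal and $C$-, $C^*$-invariant). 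The auxiliary polynomials $Q_{\alpha,\beta,a,b}$ collect the derivatives of the (affine, hence constant-derivative) inverse branches together with the entries of $C^{-1}$, renormalized; affineness of the base inverse branches is what keeps their $C^{\rho}$ norms bounded independently of anything.

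Next I would run the induction step via $\cL^{n} = \cL\circ\cL^{n-1}$. Writing the composition of inverse branches, $T^{-n}_{\bc,\ba}$ again has the block-triangular affine-in-the-base form with base contraction $\|E^{-n}\|$ and fiber contraction governed by $C^{-n}$; one applies the $n=1$ computation with $T^{-1}$ replaced by $T^{-n}_{\bc,\ba}$, using \eqref{eq.cadeia} once more and the uniform bounds $\|D^jS^n_{\bc,\ba}\|\le\alpha_0$ recorded after \eqref{g}. The key point for the uniform bound $\|Q_{\alpha,\beta,a,b,n}\|_{C^{|a|+|b|}}\le K$ independent of $n$ is that every $n$-dependence has been extracted into the explicit prefactor $(|\det E||\det C|)^{-n}\|E^{-n}\|^{|a|}\prod_l\|(C^{-n})^*\epsilon_l\|^{b_l}$: what remains inside $Q$ involves only the derivatives of $S^n_{\bc,\ba}$ (bounded by $\alpha_0$), the derivatives of the $\psi$'s and $g_{\ba}$'s (bounded by $k_1,\dots,k_r$ via Claim \ref{claim Qa}), and the \emph{normalized} matrix $\|E^{-n}\|^{-1}E^{-n}$ together with the normalized fiber blocks, which have norm $\le 1$; the regularity count $C^{r-1-|\alpha|-|\beta|+|a|+|b|}$ follows by bookkeeping the orders in \eqref{eq.cadeia} since $h\in C^r$.

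The main obstacle I anticipate is the normalization of the fiber factor: one must pass from the literal bound $\|C^{-n}\|^{b_l}$-type estimates to the sharp coordinatewise weights $\prod_{l}\|(C^{-n})^*\epsilon_l\|^{b_l}$, so that no spurious loss occurs when these weights are later compared against $\|E^{-n}\|$ and $\fm(C)$. This is exactly where putting $C$ in Jordan form and using \eqref{log} pays off: along each canonical direction $\epsilon_l$, $\|C^{-n}\epsilon_l\|$ and $\|(C^*)^n\epsilon_l\|$ are, up to subexponential factors, $\lambda_l^{\mp n}$, so the mixed derivative of order $\beta=(b_1,\dots,b_d)$ contributes precisely $\prod_l\lambda_l^{-nb_l}$, matching $\prod_l\|(C^{-n})^*\epsilon_l\|^{b_l}$ up to a factor absorbed into $Q_{\alpha,\beta,a,b,n}$ while keeping $\|Q_{\alpha,\beta,a,b,n}\|_{C^{|a|+|b|}}\le K$ uniform; the remaining verifications (disjointness/locality of the inverse branches so the sum over $T^{-n}(x,y)$ is finite and the differentiation commutes with it) are routine.
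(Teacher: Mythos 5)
Your proposal is correct and in substance coincides with the paper's argument: the paper proves the claim by differentiating $\cL^n h$ directly through the explicit inverse branches $T^{-n}_{\bc,\ba}(x,y)=\big(E^{-n}_{\bc,\ba}x,\,C^{-n}(y-S^n_{\bc,\ba}(x))\big)$ with $\|D^jS^n_{\bc,\ba}\|\le\alpha_0$ (formally by induction on the order $\rho$ rather than on $n$, but your ``induction step'' likewise just reapplies the one-step chain-rule computation of Remark \ref{composition} to $T^{-n}_{\bc,\ba}$, so the content is the same). One caution: the uniformity in $n$ of $\|Q_{\alpha,\beta,a,b,n}\|_{C^{|a|+|b|}}$ comes from normalizing by the exact row norms $\|(C^{-n})^*\epsilon_l\|$, so that the normalized coefficients are bounded by $1$, as in your second paragraph --- not from the ``up to subexponential factors'' comparison with $\lambda_l^{-n}$ in your closing paragraph, which by itself would lose a factor growing polynomially in $n$ when $C$ has nontrivial Jordan blocks.
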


\begin{proof}
By induction in $\rho$, noticing that the inverse branch $T_{\bc,\ba}^{-n}$ is locally written as 
\begin{equation}\label{inverse.branch}
T_{\bc,\ba}^{-n}(x,y)=(E_{\bc, \ba}^{-n}x,C^{-n}(y-S_{\bc,\ba}(x)) ),
\end{equation}
where $S_{\bc,\ba}^n(x)=\sum_{j=0}^{n-1} C^j f(E_{\bc,\ba}^{-j-1}y)$ is a $C^r$ function with $\|D^j S_{\bc,\ba}^n\|\leq \alpha_0$, $1 \leq j \leq r$.
\end{proof}

Using this formular, for $\psi \in \Omega$, $\phi \in C^r(\psi)$ with $\|\phi\|_{C^\rho}\leq 1$, and considering  $\psi_1,\cdots,\psi_N$, $g_1,\cdots,g_N$ such that $T^n(\psi_i(g_i(y),y)=(\psi(y),y)$, we have:

 \begin{align*}
 &\int \phi(y)\cdot  \partial_x^{\alpha} \partial_{y}^{\beta} (\mathcal{L}^n h)(\psi(y),y) dy
 \\&=  \sum_{1\leq i \leq N} \int \sum_{|a|+|b|\leq \rho} \frac{\phi(y)  \partial_x^{a} \partial_{y_1}^{b_1}\cdots \partial_{y_d}^{b_d}   h(\psi_i(g_i(y)),g_i(y) )\cdot Q_{\alpha,\beta,a,b,n}(\psi(y))}{(|\det E| \cdot |\det C|)^{n} \|E^{-n}\|^{-|a|}  \prod_{l=1}^{d}\|(C^{-n})^* \epsilon_l\|^{-b_l} } dy \\
 &= \sum_{1\leq i \leq N}\sum_{|a|+|b|\leq \rho}  \int \frac{\Psi_{\alpha,\beta,a,b,n;i}(y') \cdot   \partial_x^{a} \partial_{y_1}^{b_1}\cdots \partial_{y_d}^{b_d}  h(\psi_i(y'),y')}{(|\det E| |\det C|)^n \|E^{-n}\|^{-|a|} \prod_{l=1}^{d}\|(C^{-n})^* \epsilon_{l}\|^{-\beta_l} } dy'
 \end{align*}
 where $\Psi_{\alpha,\beta,a,b,n;i}(y')=\phi(g_i^{-1}(y')) \cdot Q_{\alpha,\beta,a,b,n}((\psi_i\circ g_i^{-1})(y')) \cdot |\det Dg_{i}^{-1}(y')|.$

 Note that $\Psi_{\alpha,\beta,a,b,n;i}$ has $C^{|a|+|b|}$-norm uniformly bounded by some constant $K$, depending on the constants $k_1,k_2,\cdots,k_r$ on the definition of $\Omega$ but not on $h$. In particular, we have
 \begin{equation}\label{eq.estimate1}
\sum_{1\leq i \leq N}\sum_{|a|+|b|< \rho}  \int \frac{\Psi_{\alpha,\beta,a,b,n;i}(y') \cdot   \partial_x^{a} \partial_{y_1}^{b_1}\cdots \partial_{y_d}^{b_d}  h(\psi_i(y'),y')}{(|\det E| |\det C|)^n \|E^{-n}\|^{-|a|} \prod_{l=1}^{d}\|(C^{-n})^* \epsilon_{l}\|^{-\beta_l} } dy' \leq K(n) \|h\|^\dagger_{\rho-1}.
 \end{equation}

We will estimate the sum
\begin{equation}\label{eq.estimate2}
\sum_{1\leq i \leq N}\sum_{|a|+|b|= \rho}  \int \frac{\Psi_{\alpha,\beta,a,b,n;i}(y') \cdot   \partial_x^{a} \partial_{y_1}^{b_1}\cdots \partial_{y_d}^{b_d}  h(\psi_i(y'),y')}{(|\det E| |\det C|)^n \|E^{-n}\|^{-|a|} \prod_{l=1}^{d}\|(C^{-n})^* \epsilon_{l}\|^{-\beta_l} } dy'.
\end{equation}

To integrate by parts, fix $i_0\in \{1,\dots,N\}$ and multi-index $(a,b)$ such that $|a|+|b|=\rho$ and note that for $b_1\ge 1$
 \begin{align*}
 \partial_x^a \partial_{y_1}^{b_1}\cdots  &\partial_{y_d}^{b_d}  h(\psi_{i_0}(y'),y')=\partial_{y_{1}}[\partial_x^a \partial_{y_1}^{b_1-1} \partial_{y_2}^{b_2} \cdots \partial_{y_d}^{b_d}  h(\psi_{i_0}(y'),y')]
 \\
 &-\sum_{j=1}^u \partial_{x_j} \big( \partial_x^a \partial_{y_1}^{b_1-1}\partial_{y_2}^{b_2} \cdots \partial_{y_d}^{b_d} h(\psi_{i_0}(y'),y') \big) \langle D\psi_{i_0}(y')\epsilon_k,e_j\rangle,
 \end{align*}

 If $b_1=1$ then the partial derivative with respect to $y_1$ disappear, otherwise we repeat the process until the partial derivative with respect to $y_1$ disappear. So:
 \begin{align*}
 \partial_x^a \partial_{y_1}^{b_1}\cdots  &\partial_{y_d}^{b_d}  h(\psi_{i_0}(y'),y')=\partial_{y_{1}}[\partial_x^a \partial_{y_1}^{b_1-1} \partial_{y_2}^{b_2} \cdots \partial_{y_d}^{b_d}  h(\psi_{i_0}(y'),y')]+\\
 &\sum_{m=1}^{b_1-1}(-1)^{m}\sum_{|a'|=m} \partial_{y_1}\big( \partial_x^{a+a'} \partial_{y_1}^{b_1-m-1}\partial_{y_2}^{b_2} \cdots \partial_{y_d}^{b_d} h(\psi_{i_0}(y'),y') \big) \prod_{j=1}^u(\langle D\psi_{i_0}(y')\epsilon_1,e_j\rangle)^{a'_j}\\
 &+ (-1)^{b_1}\sum_{|a'|=b_1} \big( \partial_x^{a+a'} \partial_{y_2}^{b_2}\cdots  \partial_{y_d}^{b_d} h(\psi_{i_0}(y'),y') \big) \prod_{j=1}^u(\langle D\psi_{i_0}(y')\epsilon_1,e_j\rangle)^{a'_j},
 \end{align*}
which may rewritten as
\begin{align*}
 \partial_x^a \partial_{y_1}^{b_1}\cdots  &\partial_{y_d}^{b_d}  h(\psi_{i_0}(y'),y')=\\
 &\sum_{|a'|=0}^{b_1-1}(-1)^{|a'|} \partial_{y_1}\big( \partial_x^{a+a'} \partial_{y_1}^{b_1-|a'|-1}\partial_{y_2}^{b_2} \cdots \partial_{y_d}^{b_d} h(\psi_{i_0}(y'),y') \big) \prod_{j=1}^u(\langle D\psi_{i_0}(y')\epsilon_1,e_j\rangle)^{a'_j}\\
 &+\sum_{|a'|=b_1}(-1)^{|a'|} \big( \partial_x^{a+a'} \partial_{y_2}^{b_2}\cdots  \partial_{y_d}^{b_d} h(\psi_{i_0}(y'),y') \big) \prod_{j=1}^u(\langle D\psi_{i_0}(y')\epsilon_1,e_j\rangle)^{a'_j},
 \end{align*}

Applying repeatedly the same process to the last sum, but considering  derivatives with respect to $y_2, \dots, y_d$ successively, we obtain that:

$$\partial_x^a \partial_{y_1}^{b_1}\cdots  \partial_{y_d}^{b_d}  h(\psi_{i_0}(y'),y')=\sum_{k=1}^{d}F_k(y')+F(y'),$$
where
 \begin{align*}
& F_k(y')=\sum_{|a^{(1)}|=b_1} \dots \sum_{|a^{(k-1)}|=b_{k-1}}\sum_{|a^{(k)}|=0}^{b_k-1} (-1)^{\sum_{j=1}^k|a^{(j)}|}(\partial_{y_k}H_{a^{(1)}, \dots, a^{(k)}}(y'))G_{a^{(1)}, \dots, a^{(k)}}(y'),\\
&H_{a^{(1)}, \dots, a^{(k)}}(y')=\partial_x^{a+a^{(1)}+\dots +a^{(k)}} \partial_{y_k}^{b_k-|a^{(k)}|-1}\partial_{y_{k+1}}^{b_{k+1}} \cdots \partial_{y_d}^{b_d} h(\psi_{i_0}(y'),y').\\
&G_{a^{(1)}, \dots, a^{(k)}}(y')=\prod_{l=1}^{k}\prod_{j=1}^u(\langle D\psi_{i_0}(y')\epsilon_l,e_j\rangle)^{a^{(l)}_j}
\end{align*}
and
\begin{align*}
 &F(y')=\sum_{|a^{(1)}|=b_1} \dots \sum_{|a^{(d)}|=b_{d}} (-1)^{|b|}\tilde{H}_{a^{(1)}, \dots, a^{(d)}}(y'))\tilde{G}_{a^{(1)}, \dots, a^{(d)}}(y'),\\
 &\tilde{H}_{a^{(1)}, \dots, a^{(d)}}(y')=\partial_x^{a+a^{(1)}+\dots +a^{(d)}}h(\psi_{i_0}(y'),y'),\\
 &\tilde{G}_{a^{(1)}, \dots, a^{(d)}}(y')=\prod_{l=1}^{d}\prod_{j=1}^u(\langle D\psi_{i_0}(y')\epsilon_l,e_j\rangle)^{a^{(l)}_j}.
\end{align*}

Integrating by parts it is easy to note that
\begin{equation}\label{eq.estimate2.1}
\int \frac{\Psi_{\alpha,\beta,a,b,n;i_0}(y') \cdot   \sum_{k=1}^{d}F_k(y')}{(|\det E| |\det C|)^n \|E^{-n}\|^{-|a|} \prod_{l=1}^{d}\|(C^{-n})^* \epsilon_{l}\|^{-\beta_l} } dy'\le K(n) \|h\|^{\dagger}_{\rho-1}.
\end{equation}

%\blue{Considering the $C^{r-1}$ map $Q_\ba: U_{\psi_{\ba}} \to L(\R^d,\R^d)$ given by $Q_\ba(z)=Dg_{\ba}^{-1}(z) C^{-n}$, we have that there exists a constant $K=K\red{(n)}>0$ such that $\|D^{j-1} Q_\ba (z)\| \leq K$ for every $z\in U_{\psi_\ba}$ and $1\le j \le r$. 
By Claim \ref{claim Qa}, the derivatives 
$ D^j g_\ba^{-1}(z)=D^{j-1}Q_\ba(z)C^{n}$, $D^{j-1} \det{Dg_\ba^{-1}(z)} = D^{j-1}(\det$ $Q_\ba(z)) \det C^{n}$ and 
$  D\psi_\ba(z)=E^{-n}D\psi(g_\ba^{-1}(z))Q_\ba(z)C^n$ are uniformly bounded by some constant $K$, since $Q_\ba$ is $C^{r-1}$ uniformly bounded.

So we have
\begin{equation}
\|\Psi_{\alpha,\beta,a,b,n;i_0}(y')\|_{C^{\rho}}\le K|\det C|^{n}
\end{equation}
 and 
\begin{equation}
\|\tilde{G}_{a^{(1)}, \dots, a^{(d)}}(y')\|_{C^{\rho}}\le K \|E^{-n}\|^{|b|}\prod_{=1}^d\|C^{n}\epsilon_l\|^{b_l},
\end{equation}
 hence
\begin{equation}\label{eq.estimate2.2}
\int \frac{\Psi_{\alpha,\beta,a,b,n;i_0}(y') \cdot   F(y')}{(|\det E| |\det C|)^n \|E^{-n}\|^{-|a|} \prod_{l=1}^{d}\|(C^{-n})^* \epsilon_{l}\|^{-\beta_l} } dy'\le K(n)\|h\|^{\dagger}_{\rho},
\end{equation}
where
\begin{equation}
K(n)=\frac{K}{(|\det E|)^n \|E^{-n}\|^{-\rho} \prod_{l=1}^{d}(\|(C^{-n})^* \epsilon_{l}\|\|C^{n}\epsilon_l\|)^{-\beta_l} }.
\end{equation}

Therefore, by \eqref{eq.estimate2.1} and \eqref{eq.estimate2.2}, we conclude that \eqref{eq.estimate2} is bounded by
$$
K(n)\|h\|_{\rho-1}^{\dagger}+ K\|E^{-n}\|^{\rho} \prod_{l=1}^{d}(\|(C^{-n})^* \epsilon_{l}\|\|C^{n}\epsilon_l\|)^{\beta_l}\|h\|_{\rho}^{\dagger}.
$$

From \eqref{log}  we have that  $\ds\frac{\log(\|(C^{-n})^* \epsilon_{l}\|\|C^{n}\epsilon_l\|)}{n}$ converges to zero, which implies \eqref{primeira.estimativa}. The estimate in \eqref{segunda.estimativa} is analogous and easier.
\end{proof}

\subsection{Second Lasota-Yorke (for Sobolev norm)}

Through this Section we fix an integer $q$ and fix $p$ such that $\tau(q,\tilde{p})=\tau(q)$ for every $\tilde{p} \geq p$.

Since $\cL \phi (x) = |\det DT|^{-1} \sum \phi \circ T_{\bc,\ba}^{-1}(x)$, Remark \ref{composition} and \eqref{inverse.branch} imply that $\cL$ is a bounded operator in $H^s$, that is
$$
	\|\cL (\phi)\|_{H^s} \leq  K \|\phi\|_{H^s}.
$$

Let us consider the dual cone fields
\begin{equation}
\cC^*=\{(u,v)\in \R^u\times\R^d\,|\, \|v\|\le \alpha_0^{-1} \|u\|\}
\end{equation}
and
\begin{equation}
\cC^*_1 = \{ (u,v)\in \R^u\times\R^d\,|\, \|v\|\le \frac{9}{10} \alpha_0^{-1} \|u\| \}.
\end{equation}

Notice that for all $(\xi_0,\eta_0)\neq 0$ in $\cC_1^*$ there is a $u$-dimensional subspace $W_0$ contained in $\cC_1^*$ such that $(\xi_0,\eta_0)\in W_0$. Indeed, it is enough to take
$$W_0=[\{(\xi_0,\eta_0), (\xi_1,0), \dots, (\xi_{u-1},0)\}],$$ where $\{\frac{\xi_0}{\|\xi_0\|},\xi_1,\dots,\xi_{u-1}\}$ is an orthonormal base of $\R^u$.

By continuity of $(x,y) \mapsto (DT^q_{(x,y)})^*$
and noticing that this map does not depend on $y$, it follows that if $(DT^q_{(x_0,y_0)})^*(\xi,\eta) \in \cC^*_1$ then there exists a $u$-dimensional subspace $W$ such that $(\xi,\eta)\in W$ and a constant $R=R(q)>0$ such that
$
(DT^q_{(x,y)})^*W \subset \cC^*
$
 for every $x\in B(x_0,R)$ and $y\in \R^d$. More precisely, we conclude that
\begin{equation}
 (DT^q_{(x,y)})^*((DT^q_{(x_0,y_0)})^*)^{-1} \cC_1^* \subset \cC^*.
 \end{equation}

Consider $p$ sufficiently large such that $\cR_*(\bc\ba)\subset B(x,R)$ for all $x\in \cR(\bc\ba)$, where $R=R(q)$ is given as above.

The following lemma gives a comparision between   $\|\phi\|^\dagger_{\rho}$ 
and the  Fourier transform of iterates of $\cL^q h(\xi,\eta)$ when $(DT^q)^*(\xi,\eta)$ is in $\mathcal{C}^*$.
The main point behind this comparison between is that the condition $(DT^q)^*(\xi,\eta) \in \mathcal{C}^*$ {allows to consider $\sigma$ with $(\xi,\eta)\in \sigma^\perp$ such that $\sigma^\perp = T^q(\tilde{\sigma})$ with $\tilde{\sigma}\in\Omega$.}

\begin{lemma}\label{estimar.transf.vs.l1}
Let $\rho_0$ be an integer with $s+1<\rho_0\le r-1$. Let $\ba\in I^{q}$ and $\bc\in I^{p}$, and $\chi:\T^u\times \R^d \to \R$ a $C^\infty$ function supported on $\cR(\bc\ba)\times \R^d$.
If $0\neq(\xi,\eta)\in \Z^u\times \R^d$ satisfies $(DT^q_{x_0})^*(\xi,\eta)\in \cC_1^*$ for some $x_0\in \cR(\bc\ba)\times \R^d$. Then, for any $\phi\in C^r(D)$,
%Take $0\neq(\xi,\eta)\in \Z^u\times \R^d$ such that $(DT^q_x)^*(\xi,\eta)\in \cC_1^*$ for every $x\in \cR(\bc\ba)\times \R^d$. Then, for any $\phi\in C^r(D)$,
\begin{equation}
  (1+\|\xi\|^2+\|\eta\|^2)^{\frac{\rho_0}{2}}|\cF(\cL^q(\chi . \phi))(\xi,\eta)|\le K(\chi,q)\|\phi\|^{\dagger}_{\rho_0}
\end{equation}
where $K(\chi,q)$ depends only on $\chi$ and $q$.
\end{lemma}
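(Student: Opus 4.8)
The plan is to express the Fourier coefficient $\cF(\cL^q(\chi\cdot\phi))(\xi,\eta)$ as an integral of $\phi$ against an exponential test function restricted to a graph over a stable-type disc, so that the hypothesis $(DT^q_{x_0})^*(\xi,\eta)\in\cC_1^*$ lets me realize the relevant linear form as tangent to the image of a manifold in $\mathcal{S}$, and then gain the factor $(1+\|\xi\|^2+\|\eta\|^2)^{\rho_0/2}$ by integrating by parts $\rho_0$ times along that manifold. First I would write, using $\cL^q\psi = |\det DT^q|^{-1}\sum_{\ba'} \psi\circ T^{-q}_{\bc,\ba'}$ together with the fact that $\chi\cdot\phi$ is supported on $\cR(\bc\ba)\times\R^d$, that only the branch indexed by $\ba$ contributes, so that
\begin{equation*}
\cF(\cL^q(\chi\phi))(\xi,\eta)=\int_{\torus\times\R^d}\frac{(\chi\phi)(T^{-q}_{\bc,\ba}(x,y))}{|\det DT^q|}e^{-2\pi i(\langle\xi,x\rangle+\langle\eta,y\rangle)}\,dx\,dy,
\end{equation*}
and then change variables by $(x,y)=T^q(x',y')$ to turn this into an integral of $\chi(x',y')\phi(x',y')\,e^{-2\pi i\langle(\xi,\eta),T^q(x',y')\rangle}\,dx'\,dy'$ over $\cR_*(\bc\ba)\times\R^d$.

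Next I would analyze the phase. Since $T^q(x',y')=(E^q x', C^q y'+S^q_{\bc,\ba}(E^q x'))$ is affine in $y'$, the phase is $\langle(\xi,\eta),T^q(x',y')\rangle = \langle(DT^q)^*(\xi,\eta),(x',y')\rangle$ plus a term depending only on $x'$; and the $(x',y')$-derivative of the phase is the constant covector $(DT^q_{(x',y')})^*(\xi,\eta)$, which does not depend on $y'$. The hypothesis says this covector lies in $\cC_1^*$ at $x_0$, hence (by the choice of $p$ making $\cR_*(\bc\ba)\subset B(x_0,R(q))$ and by the cone inclusion $(DT^q_{(x,y)})^*((DT^q_{(x_0,y_0)})^*)^{-1}\cC_1^*\subset\cC^*$ established just before the lemma) the covector lies in $\cC^*$ for every point of the support. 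Therefore the $u$-dimensional annihilator hyperplane-complement: more precisely, the orthogonal complement of $(\xi,\eta)$ contains a $d$-dimensional subspace transverse to the $\cC$-cone direction, and the kernel foliation of the phase is a foliation of $\cR_*(\bc\ba)\times\R^d$ by $(u+d-1)$-dimensional affine subspaces whose tangent spaces meet $\cC$; I would instead argue along the $d$-dimensional family on which the phase varies, i.e. split coordinates so that $(\xi,\eta)/\|(\xi,\eta)\|$ is dual to a direction lying in $\cC$, and integrate by parts $\rho_0$ times in that direction. Concretely, there is a unit covector $w$ in (a fixed neighborhood of) the $\cC^*$-cone and a directional derivative $\partial_w$ with $\partial_w e^{-2\pi i\langle(\xi,\eta),T^q(\cdot)\rangle} = -2\pi i\,m\, e^{-2\pi i\langle\cdots\rangle}$ where $|m|$ is comparable, uniformly in the support, to $\|(DT^q)^*(\xi,\eta)\|\asymp (1+\|\xi\|^2+\|\eta\|^2)^{1/2}$ (the norm equivalence uses that $\xi$ ranges over the lattice, so $\|(\xi,\eta)\|\ge$ const, and $(DT^q)^*$ is a fixed invertible linear map). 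Integrating by parts $\rho_0$ times transfers $\rho_0$ derivatives onto $\chi(x',y')\phi(x',y')$ and produces the gain $(1+\|\xi\|^2+\|\eta\|^2)^{\rho_0/2}$ in the denominator; the boundary terms vanish because $\chi$ has compact support.

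Finally I would bound the resulting integral by $\|\phi\|^\dagger_{\rho_0}$. After the integrations by parts I have a sum, over multi-indices with $|\sigma|\le\rho_0$, of integrals of the form $\int \Phi_\sigma(x',y')\,\partial^\sigma(\chi\phi)(x',y')\,dx'\,dy'$ (times constants of size at most $K(\chi,q)$), where each $\partial^\sigma$ involves only the fixed direction $w\in\cC^*$; rewriting $\partial^\sigma(\chi\phi)$ by Leibniz as $\chi$-weighted combinations of $\partial^{\sigma'}\phi$ with $|\sigma'|\le\rho_0$, I recognize each such integral as an integral of $\phi$'s derivatives against a $C^{|\sigma'|}$-bounded test density over the region $\cR_*(\bc\ba)\times\R^d$, which — once I check that this region, sliced appropriately, is covered by (finitely many, with number depending only on $q$) graphs $G_\psi$ of elements $\psi\in\mathcal{S}$ with the transversality of $w$ to $\cC$ guaranteeing the slicing directions are graphical over $\R^d$-discs — is exactly what $\|\phi\|^\dagger_{\rho_0}$ controls, after normalizing the test densities to have $C^{|\sigma'|}$-norm $\le 1$ at the cost of a constant $K(\chi,q)$. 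The main obstacle I anticipate is the bookkeeping in the previous sentence: making precise that "$(DT^q_{x_0})^*(\xi,\eta)\in\cC_1^*$" forces the phase to be non-stationary in a direction along which $\cR_*(\bc\ba)\times\R^d$ decomposes into graphs in $\mathcal{S}$, so that the integration by parts and the definition of $\|\cdot\|^\dagger_{\rho_0}$ line up — in particular choosing the $u$-dimensional plane $W_0$ (as in the discussion preceding the lemma) containing $(\xi,\eta)$ and inside $\cC_1^*$, so that its orthogonal complement, pulled back by $T^{-q}$, is the graph of a $\psi\in\mathcal{S}$, and carrying the uniform $C^{\rho_0}$-bounds on all the Jacobian and phase factors through the change of variables via Claim \ref{claim Qa} and \eqref{alpha_0}.
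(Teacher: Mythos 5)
Your route (change variables to the preimage and run a non-stationary-phase integration by parts against the phase $\langle(\xi,\eta),T^q(x',y')\rangle$) is genuinely different from the paper's, and it could in principle be completed, but as written it has two real gaps at exactly the points where the work happens. First, the integration-by-parts step needs a single fixed direction $w$ with $|\langle (DT^q_{x'})^*(\xi,\eta),w\rangle|$ bounded below by $c(q)(1+\|\xi\|^2+\|\eta\|^2)^{1/2}$ \emph{uniformly} over the support. The fact you actually derive from the displayed cone inclusion --- that the covector lies in $\cC^*$ at every point of $\cR_*(\bc\ba)$ --- does not give this: $\cC^*$ is a wide cone (it contains all of $\R^u\times\{0\}$), so covectors in $\cC^*$ at different points can be mutually orthogonal and a fixed $w$ can be nearly annihilated somewhere on the support. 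What you need is the quantitative statement behind that inclusion, namely that $(DT^q_{x'})^*\big((DT^q_{x_0})^*\big)^{-1}$ is close to the identity on $\cR_*(\bc\ba)$ once $p$ is large relative to $q$, so the direction of the phase gradient oscillates little and $w=(DT^q_{x_0})^*(\xi,\eta)/\|(DT^q_{x_0})^*(\xi,\eta)\|$ works; you assert the uniform comparability but do not justify it. Two smaller slips in the same step: the lower bound ``$\|(\xi,\eta)\|\ge$ const because $\xi$ is a lattice point'' fails for $\xi=0$ and $|\eta|$ small, so you must treat bounded frequencies separately via the trivial bound $|\cF(\cL^q(\chi\phi))|\le K\|\phi\|_{L^1}\le K\|\phi\|^\dagger_{\rho_0}$ (as the paper does); and the boundary terms do not vanish ``because $\chi$ has compact support'' --- $\chi$ is supported on $\cR(\bc\ba)\times\R^d$, unbounded in $y$, so you must invoke the support/vanishing of $\phi$ in the $y$-direction instead.

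Second, and more seriously, the final reduction to $\|\phi\|^\dagger_{\rho_0}$ is not done. The region $\cR(\bc\ba)\times\R^d$ is $(u+d)$-dimensional while the graphs of elements of $\cS$ are $d$-dimensional, so it cannot be ``covered by finitely many graphs $G_\psi$''; what is required is a $u$-parameter disintegration (Fubini/Rokhlin) of Lebesgue measure along the family of $d$-dimensional affine slices orthogonal to the subspace $W$ containing $(\xi,\eta)$, the verification that the $T^{-q}$-preimage of \emph{each} slice is the graph of an element of $\cS$ (this is where the cone hypothesis is really used), and uniform $C^{\rho_0}$ control of the disintegration factors and test densities. You flag this as ``bookkeeping,'' but it is the crux of the lemma, and it is precisely what the paper's proof carries out. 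For comparison, the paper avoids your oscillatory-integral analysis altogether: it gains the factor exactly from the identity relating $\cF(\partial^{\rho_0}_{x_j}\cL^q(\chi\phi))(\xi,\eta)$ to the largest frequency coordinate, bounds this by the $L^1$-norm of $\partial^{\rho_0}_{x_j}\cL^q(\chi\phi)$, disintegrates that $L^1$-norm along the affine slices orthogonal to $W$ in the \emph{image}, and only then pulls each slice back through the single inverse branch, using the chain rule and the bounds of Claim \ref{claim Qa} to land on integrals of $|\partial^\beta\phi|$ over graphs in $\cS$, i.e.\ on $\|\phi\|^\dagger_{\rho_0}$. If you complete your version, you must supply the uniform non-stationarity estimate and this leafwise disintegration explicitly; neither follows from what you cite.
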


\begin{proof} We will consider a $u$-dimensional subspace $W$ as described above satisfying $(DT_x^q)^*W\subset \cC^*$, for all $x\in B(x_0,R)\supset \cR(\bc\ba)$ and $(\xi,\eta)\in W$.

%%Firstly take a $d$-dimensional subspace $W_0$ contained in $\cC^*$ such that $(DT^q_x)^*(\xi,\eta)\in W_0$ and consider
%$W=[(DT^q_x)^*]^{-1}W_0$. Of course, $W$ is also a $u$-dimensional subspace and $(DT^q_x)^*W=W_0$, because $(DT^q_x)^*$ is invertible.

Let $0\neq(\xi,\eta)\in W\cap \Z^u\times \R^d$, then the standard property of Fourier transform $\Fc(\partial_{x_k}u)=i \xi_k \Fc u$ gives:
\begin{equation}
|\Fc(\cL^q(\chi \phi))(\xi,\eta)| (\|\xi\|^{\rho_0} + \|\eta\|^{\rho_0}) \leq K |\Fc(\partial^{\rho_0}_{x_j}\cL^q(\chi\phi))(\xi,\eta)|,
\end{equation}
where the $\rho$ derivatives are taken with respect to the variable $x_j$ ($\xi_j$ or $\eta_j$) that has greatest absolute value ($|x_j|=\max \{|\xi_j|, |\eta_j|\}$). %($C=(2(u+d))^{\frac{\rho}{2}}$)

Define the partition $\Gamma$ of $D\cap(\mathcal{R}(\bc)\times\R^d)$  formed by the intersections $\sigma$ of $D\cap(\mathcal{R}(\bc)\times\R^d)$ with the $d$-dimensional affine manifolds orthogonal to $W$.

Since the support of $\cL^q(\chi \phi)$ is contained in $D\cap (\cR(\bc)\times \R^d)$, Rokhlin's disintegration theorem gives:
%Since the support of $\cL^q(\chi \phi)$ is contained in $D\cap (\cR(\bc)\times \R^d)$, changing variables $(x,y) \in \R^u\times \R^d$ to $(\tilde{x},\tilde{y})\in W\times W^\perp$, we have
\begin{align*}
|\Fc(\partial^{\rho_0}_{x_j}\cL^q(\chi\phi))(\xi,\eta)| &\leq \int_{\cR(\bc)\times \R^d} | \partial^{\rho_0}_{x_j}\cL^q(\chi\phi))(x,y) |  dm \\
&\leq \int_\Gamma \int_{\sigma } | \partial^{\rho_0}_{x_j}\cL^q(\chi\phi))(x,y) | dm_\sigma (x,y) d\hat{m}(\sigma)\\
&\leq \hat{m}(\Gamma) \sup_{\sigma \in \Gamma}\int_{\sigma} |\partial^{\rho_0}_{x_j}\cL^q(\chi\phi)(x,y)|dm_\sigma.
%\leq \sup m_\sigma(\sigma) \sup_{\sigma \in \Gamma}\int_\sigma \partial^{\rho_0}_{x_j}\cL^q(\chi\phi))(\xi,\eta)
\end{align*}

Each $m_\sigma$ above is the $d$-dimensional Lebesgue measure on $\sigma$ and  
%Above we used that  $(x,y) \to (\sigma, (\tilde{x},\tilde{y})\in\tilde{\sigma} )$ have Jacobian $1$ due to orthogonality.
%Note that $\hat m(\Gamma)$ clearly bounded, because 
$\hat m$ is identified with the $u$-dimensional Lebesgue measure on the set of the points $w\in W$ such that $(w+W^\perp)\cap \sigma \neq \emptyset$ for some $\sigma\in\Gamma$. In particular, $\hat{m}(\Gamma)$ is finite, because the set of points $w\in W$ such $(w+W^\perp)\cap \sigma\neq \emptyset$ for some $\sigma\in\Gamma$ is bounded.

% and the diameter of this set is bounded by the diameter of $D$.

For each $\sigma \in \Gamma$, there is a unique $\widetilde{\sigma}$ contained in $\cR(\bc\ba)\times \R^d$ such that $T^q(\widetilde{\sigma})=\sigma$.
For $x\in \widetilde{\sigma}$ and $(u,v)$ tangent to $\sigma$ at $T^q(x)$, we have
$$
0=\langle (u,v) ,(w_1,w_2) \rangle=\langle (DT^q_x)^{-1}(u,v) ,(DT^q_x)^*(w_1,w_2) \rangle
$$
for all $(w_1,w_2)\in W$.

Since $(DT^q_x)^*W$ is a $u$-dimensional subspace contained in $\cC^*$, we have $(DT^q_x)^{-1}(u,v)\in\cC$. So, we conclude that
 $\tilde{\sigma}=T^{-q} \sigma \cap (\cR(\bc\ba)\times \R^d)$ is the graph of some $\tilde{\psi}$ in $\cS$.

Since $\chi$ is supported in $\mathcal{R}(\bc\ba)\times \R^d$, 
we have that  $\cL^q(\chi\phi) = \frac{(\chi\phi)\circ b}{|\det DT^q|}$ 
for the inverse branch $g:\cR(\bc)\times \R^d \to \cR(\bc\ba)\times \R^d$ of the restriction of $T^{q}$  to $\R(\bc\ba)\times \R^d$. Then
\begin{align*}
|\det DT^{q}||\partial^{\rho_0}_{x_j}\cL^q(\chi\phi)(x,y)|&=|\partial^{\rho_0}_{x_j}((\chi\phi)\circ g)(x,y)| \\ &= |\sum k_{\alpha,\beta,\gamma} \partial^\alpha \chi(g(x,y)) \partial^\beta \phi(g(x,y))\partial^{\gamma} g (x,y)|\\
&\leq K(\chi)K(b) \sum_{\beta}|\partial^{\beta}\phi(g(x,y))|
\end{align*}

 Integrating and changing variables, we obtain:

\begin{align*}
\int_{\sigma} |\partial^{\rho_0} \cL^q(\chi \phi)(x,y)|dm_{\sigma} &\leq |\det DT^q|^{-1}K(\chi,q) \sum_\beta \int_\sigma |\partial^\beta \phi(b(x,y))|dm_{\sigma} \\
&\leq K(\chi,q) \int_ {\tilde{\sigma}}|\partial^\beta \phi(\tilde{x},\tilde{y})|dm_{\tilde{\sigma}} \leq K(\chi,q) \|\phi\|^\dagger_{\rho_0}
\end{align*}

Putting it together, we have that
$$|\Fc(\cL^q(\chi \phi))(\xi,\eta)|( \|\xi\|^{\rho_0} + \|\eta\|^{\rho_0}) \leq K  \hat{m}(\Gamma) K(\chi,q) \|\phi\|^\dagger_{\rho_0}.$$

Finally, the result follows noticing 
 that $|\cF(\cL^q(\chi \phi))(\xi,\eta)|\le \|\phi\|_{L^1}\le K\|\phi\|^{\dagger}_{\rho_0}$ and  $(1+|\xi|^2+|\eta|^2)^{\frac{\rho_0}{2}}\le K (1+|\xi|^{\rho_0}+|\eta|^{\rho_0})$.

\end{proof}

One Lemma concerning the transversality that shall be used in the proof of the Lasota-Yorke inequality is the following:

\begin{lemma}\label{transversality.lemma}
	Let $(\xi,\eta)\in \Z^u\times \R^d\setminus \{0\}$. If $\ba$ is transversal to $\bb$ on $\cR_*(\bc)$ then either $(DT_x^q)^*(\xi,\eta)\in \cC_1^*$ for all
%$x\in \cR_*(\bc\ba)$
$x\in \sE^{-q}_{\bc,\ba}(\cR_*(\bc))$ or $(DT^q_x)^*(\xi,\eta)\in \cC_1^*$ for all
%$x\in\cR_*(\bc\bb)$
 $x\in \sE^{-q}_{\bc,\bb}(\cR_*(\bc))$.
\end{lemma}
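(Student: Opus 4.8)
The plan is a short proof by contradiction: once the block‑triangular form of $DT^q$ is written down, the statement reduces to a single linear‑algebra estimate. The key observation is that the off‑diagonal block of $DT^q$ is, up to composition with $E^q$, precisely $DS_\bc(\cdot,\ba)$, so the quantitative transversality of $\ba$ and $\bb$ passes directly to a statement about $(DT^q)^*$. Concretely, on $\sE^{-q}_{\bc,\ba}(\cR_*(\bc))\times\R^d$ the iterate $T^q$ has the form $(x,y)\mapsto(E^qx,\,C^qy+S^q_{\bc,\ba}(E^qx))$, and the reindexing $i=j+1$ identifies $S^q_{\bc,\ba}$ with $S_\bc(\cdot,\ba)$ (a term independent of $y$), whence
\[
(DT^q_x)^*(\xi,\eta)=\bigl((E^q)^*\bigl(\xi+DS_\bc(E^qx,\ba)^*\eta\bigr),\,(C^q)^*\eta\bigr),
\]
and likewise with $\bb$. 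If $\eta=0$ this vector equals $((E^q)^*\xi,0)$ with $(E^q)^*\xi\neq 0$ (since $\xi\neq 0$), which lies in $\cC_1^*$ for every $x$, so both conclusions hold; hence we may assume $\eta\neq 0$.

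Next I would argue by contradiction: suppose there exist $x\in\sE^{-q}_{\bc,\ba}(\cR_*(\bc))$ and $y\in\sE^{-q}_{\bc,\bb}(\cR_*(\bc))$ with $(DT^q_x)^*(\xi,\eta)\notin\cC_1^*$ and $(DT^q_y)^*(\xi,\eta)\notin\cC_1^*$. Writing $\bar x=E^qx$ and $\bar y=E^qy$ (both in $\cR_*(\bc)$, since $E^q$ is applied to points of an inverse branch over $\cR_*(\bc)$), the failure of the cone condition reads $\|(E^q)^*(\xi+DS_\bc(\bar x,\ba)^*\eta)\|<\tfrac{10}{9}\alpha_0\|(C^q)^*\eta\|$ and the same inequality for $(\bar y,\bb)$. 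Subtracting and using the triangle inequality, the terms in $\xi$ cancel and one gets
\[
\bigl\|(E^q)^*\bigl(DS_\bc(\bar x,\ba)-DS_\bc(\bar y,\bb)\bigr)^*\eta\bigr\|<\tfrac{20}{9}\,\alpha_0\,\|(C^q)^*\eta\|.
\]
I would then bound the two sides: on the left $\|(E^q)^*w\|\ge\|(E^q)^{-1}\|^{-1}\|w\|\ge\underline{\mu}^q\|w\|$, while $\fm(A^*)=\fm(A)$ equals the smallest singular value, so the transversality of $\ba$ and $\bb$ on $\cR_*(\bc)$ (applicable since $\bar x,\bar y\in\overline{\cR_*(\bc)}$) yields $\|(DS_\bc(\bar x,\ba)-DS_\bc(\bar y,\bb))^*\eta\|\ge\fm\bigl(DS_\bc(\bar x,\ba)-DS_\bc(\bar y,\bb)\bigr)\|\eta\|>3\theta^q\alpha_0\|\eta\|$; on the right $\|(C^q)^*\eta\|\le\|C\|^q\|\eta\|=\overline{\lambda}^q\|\eta\|$. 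Since $\theta=\overline{\lambda}\,\underline{\mu}^{-1}$ gives $\underline{\mu}^q\theta^q=\overline{\lambda}^q$, the displayed inequality forces $3\,\overline{\lambda}^q\alpha_0\|\eta\|<\tfrac{20}{9}\,\overline{\lambda}^q\alpha_0\|\eta\|$, i.e.\ $3<\tfrac{20}{9}$, which is absurd; this contradiction proves the lemma.

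I expect no genuine obstacle here: the difficulty is purely bookkeeping — verifying the identification $S^q_{\bc,\ba}=S_\bc(\cdot,\ba)$, checking that the evaluation points $\bar x,\bar y$ indeed land in $\overline{\cR_*(\bc)}$, and confirming that the numerical slack points the right way. It does: the factor $3$ in the transversality condition \eqref{transversality} and the factor $\tfrac{9}{10}$ in the definition of $\cC_1^*$ are chosen precisely so that $3>\tfrac{20}{9}$, which is exactly what closes the argument.
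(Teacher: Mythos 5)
Your argument is correct and is essentially the paper's own proof: the same block upper-triangular form of $(DT^q)^*$, the same contradiction via the triangle inequality, and the same bounds $\|(E^q)^*w\|\ge\|E^{-q}\|^{-1}\|w\|\ge\underline{\mu}^q\|w\|$, $\|(C^q)^*\eta\|\le\overline{\lambda}^q\|\eta\|$ together with the transversality lower bound, leading to the numerical contradiction ($3>\tfrac{20}{9}$, equivalently the paper's $2<\tfrac{27}{10}$). Your explicit treatment of the $\eta=0$ case is a minor (and harmless) addition beyond what the paper writes.
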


\begin{proof} %First, some comments:\\
%
%
%	In the proof of this lemma, we will use some facts about linear algebra.
%	%
%	\textbf{Fact 1}: Let $A:\R^u \to \R^d$ be a linear transformation such that there is a $d$-dimensional subspace $W\subset \R^u$ with
%	$\|Aw\|\ge c\|w\|$, for all $w\in W$, then $\|A^*v\|\ge c\|v\|$, for all $v\in\R^d$. Moreover, $\|A^*\|=\|A\|$. (Use Singular Values Decomposition, $A$ and $A^*$ have the same singular values.)
%	%
%	\textbf{Fact 2} If $T:\R^u \times \R^d \to \R^u \times \R^d$ is a linear transformation $T(x,y)=(Ex+Uy,Vx+Cy)$, where $E:\R^u \to \R^u$, $U:\R^d \to \R^u$, $V:\R^u \to \R^d$ and $C:\R^d \to \R^d$ are linear transformations, then $T^*(x,y)=(E^*x+V^*y, U^*x + C^*y)$.\\
%

	Note that if $E^q(x_\ba)=x$ for some $x_\ba \in \sE^{-q}_{\bc,\ba}(\cR_*(\bc))$%\in\cR(\bc\ba)$,
then
%	$$
%	DT^q(x_\ba,y)=\left(
%	\begin{array}{cc}
%	E^q & 0 \\
%	DS_\bc(x,\ba)E^q & C^q\\
%	\end{array}
%	\right).
%	$$
%	Hence,
	$$
	(DT^q(x_\ba,y))^*=\left(
	\begin{array}{cc}
	(E^q)^* & (E^q)^*(DS_\bc(x,\ba))^* \\
	0 & (C^q)^*\\
	\end{array}
	\right).
	$$
	
	Supposing that  $(DT^q(x_\ba,y))^*(\xi,\eta)\not\in\cC_1^*$ for some $x\in \sE^{-q}_{\bc,\ba}(\cR_*(\bc))$%$x_\ba\in\cR(\bc\ba)$
, then we claim that  $(DT^q(x_\bb,y))^*(\xi,\eta)\in\cC_1^*$ for all
	$x_\bb\in\sE^{-q}_{\bc,\bb}(\cR_*(\bc))$. %$\in\cR(\bc\bb)$.
	
	In fact, if both vectors are not in $\cC_1^*$, then
	$\|(\sC^q)^*\eta\|> 9/10 \alpha_0^{-1}\|(\sE^q)^*\xi+(\sE^q)^*(DS_\bc(x,\ba))^*\eta\|$ and $\|(\sC^q)^*\eta\|> 9/10 \alpha_0^{-1}\|(E^q)^*\xi+(E^q)^*(DS_\bc(\tilde{x},\bb))^*\eta\|$. Then, summing and using triangular inequality, we have that 
	$$
	2\|(\sC^q)^*\eta\|>9/10 \alpha_0^{-1}\|(\sE^q)^*(DS_\bc(x,\ba)-DS_\bc(\tilde x,\bb))^*\eta\|.
	$$

	On the other hand, the transversality implies that $\|(DS_\bc(x,\ba)-DS_\bc(\tilde x,\bb))^*\eta\|\ge 3\alpha_0\|\sC\|^{q}\|\sE^{-1}\|^q\|\eta\|$. So, by the last inequality,
	$$
	2\|\sC^q\|\|\eta\|> \frac{27}{10}\|\sE^{-q}\|^{-1}\|\sC\|^q\|E^{-1}\|^q\|\eta\|.
	$$

	Since $\|\sE^{-q}\|\le \|\sE^{-1}\|^q$, it follows $\|\sE^{-q}\|\|\sE^{-1}\|^q\ge 1$, and therefore
	$
	2\|\sC^q\|\|\eta\|> \frac{27}{10}\|\sC\|^q\|\eta\|,
	$
	which is a contradiction.
\end{proof}

To make the local argument we will consider a fixed partition of  unity.  For this purpose,  consider $\{\chi_\bc:\torus \to \R\}_{\bc\in\mathcal{A}^p}$ a family of $C^\infty$ functions that form a partition of unity subordinated to the covering $\{\mathcal{R}_*(\bc)\}$.

We define  $\{\chi_{\bc,\ba}:\torus \to \R\}_{\bc\in\mathcal{A}^p}$ by
\begin{equation}
\chi_{\bc,\ba}(E^{-q}_{\bc,\ba} (x) ) = \chi_\bc(x)
\end{equation}
if $x\in\mathcal{R}_*(\bc)$ and $0$ elsewhere. Notice that $\{\chi_{\bc,\ba}\}$ is another partition of unity  subordinated to $\{\mathcal{R}_*(\bc\ba)\}$.

The following lemma compares the $H^s$ norm of $\phi$ with the sums of $H^s$ norm of $\chi_\bc\phi$, defined by  $\chi_\bc\phi(x,y):=\chi_\bc(x)\phi(x,y)$.

\begin{lemma}\label{partition.unity}
	There exists a constant $K$ such that, for any $\phi\in C^r(D)$, it holds
	\begin{equation}\label{partition.1}
	\sum_{(\ba,\bc)\in A^q\times A^p}\|\chi_{\bc,\ba}\phi\|^2_{H^s} \leq  2 \|\phi\|^2_{H^s} + K \|\phi\|^2_{L^1}
	\end{equation}
	and
	\begin{equation}\label{partition.2}
	\|\phi\|^2_{H^s} \leq K \sum_{\bc \in A^p} \|\chi_\bc \phi\|^2_{H^s} + K \|\phi\|^2_{L^1}.
	\end{equation}
\end{lemma}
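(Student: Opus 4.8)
The plan is to deduce both inequalities from the single structural fact that $\{\chi_\bc\}_{\bc\in A^p}$ and $\{\chi_{\bc,\ba}\}_{(\bc,\ba)\in A^p\times A^q}$ are \emph{finite} partitions of unity, subordinate respectively to the bounded-overlap covers $\{\mathcal{R}_*(\bc)\}$ and $\{\mathcal{R}_*(\bc\ba)\}$, whose members have $C^k$-norms bounded by a constant $K(q)$ for $k\le\lceil s\rceil+1$ (since $p,q$ are fixed and $\chi_{\bc,\ba}$ locally equals $\chi_\bc\circ E^q$ with $E$ linear, so its $k$-th derivative is $\le\|\chi_\bc\|_{C^k}\|E\|^{kq}$). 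Throughout I would work with the norm $\|\cdot\|_{\tilde H^s}$ of \eqref{def.sobolev1} and \eqref{def.sobolev2}, which by the norm equivalence we may use interchangeably with $\|\cdot\|_{H^s}$ and for which the Leibniz rule is transparent, and use Claim \ref{ineq.sobolev.l1} to absorb lower-order terms into $\|\phi\|_{L^1}^2$. Inequality \eqref{partition.2} is then immediate: since $\sum_\bc\chi_\bc\equiv1$ on $\torus$, the triangle inequality and Cauchy--Schwarz over the finite set $A^p$ give $\|\phi\|_{H^s}=\big\|\sum_\bc\chi_\bc\phi\big\|_{H^s}\le\sum_\bc\|\chi_\bc\phi\|_{H^s}\le(\#A^p)^{1/2}\big(\sum_\bc\|\chi_\bc\phi\|_{H^s}^2\big)^{1/2}$, i.e.\ \eqref{partition.2} with $K=\#A^p$ (the $L^1$-term being unnecessary here).

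For \eqref{partition.1}, fix a multi-index $\sigma$ with $|\sigma|\le\lfloor s\rfloor$ and write, by the Leibniz rule, $\partial^\sigma(\chi_{\bc,\ba}\phi)=\chi_{\bc,\ba}\,\partial^\sigma\phi+R^\sigma_{\bc,\ba}$, where $R^\sigma_{\bc,\ba}$ collects the terms $\partial^{\sigma'}\chi_{\bc,\ba}\cdot\partial^{\sigma-\sigma'}\phi$ with $\sigma'\neq 0$; in the fractional seminorm of \eqref{def.sobolev2} one splits the increment $\partial^\sigma(\chi_{\bc,\ba}\phi)(z+h)-\partial^\sigma(\chi_{\bc,\ba}\phi)(z)$ analogously, isolating the leading term $\chi_{\bc,\ba}(z+h)\big(\partial^\sigma\phi(z+h)-\partial^\sigma\phi(z)\big)$. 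The leading terms are handled by the elementary pointwise bound $\sum_{\bc,\ba}\chi_{\bc,\ba}(x)^2\le\big(\sum_{\bc,\ba}\chi_{\bc,\ba}(x)\big)^2=1$, which gives $\sum_{\bc,\ba}\|\chi_{\bc,\ba}\partial^\sigma\phi\|_{L^2}^2\le\|\partial^\sigma\phi\|_{L^2}^2$ and the analogous inequality for the Gagliardo seminorm, so that their total contribution is at most $\|\phi\|_{\tilde H^s}^2$. Every remainder term carries a derivative of $\phi$ of order strictly less than $s$; bounding $\|\partial^{\sigma'}\chi_{\bc,\ba}\|_{C^0}\le K(q)$, using the bounded overlap of the supports to pull $\sum_{\bc,\ba}$ inside the integral, and --- for the fractional part --- using that a difference $\chi_{\bc,\ba}(z+h)-\chi_{\bc,\ba}(z)$ supplies the missing fractional smoothness, one obtains $\sum_{\bc,\ba}(\text{remainder contributions})\le K(q)\,\|\phi\|_{H^t}^2$ for some $t<s$.

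Combining these two bounds through a crude splitting $|a+b|^2\le\tfrac32|a|^2+3|b|^2$ yields $\sum_{\bc,\ba}\|\chi_{\bc,\ba}\phi\|_{\tilde H^s}^2\le\tfrac32\|\phi\|_{\tilde H^s}^2+K(q)\|\phi\|_{H^t}^2$; applying Claim \ref{ineq.sobolev.l1} to replace $\|\phi\|_{H^t}^2$ by $\epsilon\|\phi\|_{H^s}^2+K(\epsilon)\|\phi\|_{L^1}^2$ with $\epsilon$ chosen small enough that the $K(q)\epsilon$-term is $\le\tfrac12\|\phi\|_{\tilde H^s}^2$ gives exactly \eqref{partition.1}.

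The step I expect to be the real obstacle is the estimate of the remainder terms of the non-integer fractional seminorm, i.e.\ checking that double integrals such as
\[
\int\!\!\int\frac{|\chi_{\bc,\ba}(x+v)-\chi_{\bc,\ba}(x)|^2\,|\partial^{\lfloor s\rfloor}\phi(x,y)|^2}{(|v|^2+|w|^2)^{\frac{u+d}{2}+\delta}}\,dv\,dw\,dx\,dy
\]
are finite and dominated by $K(q)\,\|\phi\|_{H^{\lfloor s\rfloor}}^2$: this forces one to integrate out the transverse variable $w$, split the $(v,w)$-integral at a fixed radius, and use Lipschitz control of $\chi_{\bc,\ba}$ near the diagonal together with $L^\infty$ control away from it, while keeping the overlap multiplicity (hence the sum over $(\bc,\ba)$) under control. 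The rest is routine bookkeeping; the one genuinely important point is that every constant produced along the way ($K(q)$, the overlap multiplicity, the interpolation constants) is independent of $\phi$, which is exactly what makes the factor in front of $\|\phi\|^2_{H^s}$ in \eqref{partition.1} an absolute constant.
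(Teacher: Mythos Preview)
Your argument for \eqref{partition.1} is essentially the paper's own: both use the Leibniz expansion to isolate the leading term $\chi_{\bc,\ba}\partial^\sigma\phi$, bound its contribution via the pointwise inequality $\sum_{\bc,\ba}\chi_{\bc,\ba}^2\le 1$, control the remainders by a lower-order Sobolev norm (the paper's $R_2,R_3,R_4$ correspond exactly to the cross terms you single out, and its treatment of $R_2$ is the Lipschitz-near/$L^\infty$-far split you anticipate for the fractional seminorm), and finish with the interpolation of Claim~\ref{ineq.sobolev.l1}. Your explicit $|a+b|^2\le\tfrac32|a|^2+3|b|^2$ followed by a small-$\epsilon$ interpolation is just a cleaner way of writing what the paper sketches under the phrase ``Young's inequality''.

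For \eqref{partition.2} the approaches genuinely diverge. You use the one-line argument $\|\phi\|_{H^s}=\|\sum_\bc\chi_\bc\phi\|_{H^s}\le(\#A^p)^{1/2}\bigl(\sum_\bc\|\chi_\bc\phi\|_{H^s}^2\bigr)^{1/2}$, which is correct and makes the $L^1$ term superfluous; the resulting constant is $\#A^p$. The paper instead expands $\|\phi\|_{H^s}^2=\sum_{\bc,\bc'}\langle\chi_\bc\phi,\chi_{\bc'}\phi\rangle_{H^s}$, bounds the overlapping pairs by Cauchy--Schwarz and the bounded-overlap multiplicity of the cover, and invokes Claim~\ref{support} to control the pairs with separated supports by $K\|\phi\|_{L^1}^2$. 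The paper's route is more elaborate but yields a constant governed by the overlap multiplicity of $\{\mathcal{R}_*(\bc)\}$ rather than the cardinality $\#A^p$; since $p$ is fixed throughout Section~4.2 this distinction is immaterial for the use made of the lemma, so your simpler argument is perfectly adequate here.
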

\begin{proof} First consider the case $s\in \N$. Then
\begin{align*}
\sum_{(\ba,\bc)\in A^q\times A^p}\|\chi_{\bc,\ba}\phi\|_{H^s}^{2}&=\sum_{(\ba,\bc)\in A^q\times A^p}\sum_{|\sigma|\le s}\|\partial^{\sigma}(\chi_{\bc,\ba}\phi)\|_{L^2}^{2}   \le K(s)(I_1 +I_2), 
\end{align*}
where
$$
I_1=\sum_{(\ba,\bc)\in A^q\times A^p}\sum_{|\sigma|\le s} \int_{\T^u \times \R^d}|\chi_{\bc,\ba}(x,y)|^2|\partial^{\sigma}\phi(x,y)|^2dxdy
$$
and
$$
I_2=\sum_{(\ba,\bc)\in A^q\times A^p}\sum_{|\sigma|\leq s}\sum_{\sigma'<\sigma}{\sigma \choose \sigma'}\int_{\T^u \times \R^d}|\partial^{\sigma-\sigma'}\chi_{\bc,\ba}(x,y)|^2|\partial^{\sigma'}\phi(x,y)|^2\,dxdy.
$$

Since $\chi_{\bc,\ba}$ is a partition of unity,  $I_1$ is bounded by 
$$
\sum_{|\sigma|\le s} \int_{\T^u \times \R^d}|\partial^{\sigma}\phi(x,y)|^2dxdy=\|\phi\|_{H^s}^2
$$
and $I_2$ is bounded by $K(s,p,q)\|\phi\|_{H^{s-1}}^2$. 
Using Young's inequality, it follows \eqref{partition.1}.

For the case $s\not \in \N$, let $t$ be the largest integer that is less than $s$ and $\delta=s-t \in (0,1)$. Then
\begin{align*}
\sum_{\ba,\bc} \|\chi_{\bc,\ba}\phi\|^2_{H^s} &\le \sum_{\ba,\bc} \sum_{|\sigma|\le t} \|\partial^{\sigma} (\chi_{\bc,\ba}\phi) \|^2_{L^2} + \sum_{(\ba,\bc)\in A^q\times A^p}\sum_{|\sigma|= t} R(\ba,\bc,\sigma) = \mathcal{S}_1 + \mathcal{S}_2,
%\sum_{\ba,\bc}\sum_{|\sigma|=t} b_{\sigma} \|\chi_{\bc,\ba}\partial^{\sigma}\phi +  \sum_{\sigma'<\sigma} {{\sigma}\choose{\sigma'}} \partial^{\sigma-\sigma'}\chi_{\bc,\ba}\partial^{\sigma'} \phi \|^2_{H^\delta} +  \sum_{|\sigma|\leq t} b_\sigma \| \partial^\sigma (\chi_{\bc,\ba} \phi) \|^2_{L^2} \\
%&\leq  \tilde{C}\sum_{\ba,\bc}\sum_{|\sigma|=t} b_{\sigma} \|\chi_{\bc,\ba}\partial^{\sigma}\phi\|^2_{H^\delta} + \tilde{C}   \sum_{\sigma'<\sigma} \| \partial^{\sigma-\sigma'}\chi_{\bc,\ba}\partial^{\sigma'} \phi \|^2_{H^\delta} +  \sum_{|\sigma|\leq t} b_\sigma \| \partial^\sigma (\chi_{\bc,\ba} \phi) \|^2_{L^2}
\end{align*}
where, considering $X=(x,u)$ and $V=(v,w)$,
$$
R(\ba,\bc,\sigma)=\int_{\T^u \times \R^d}\int_{\R^u \times \R^d}\frac{|\partial^{\sigma}(\chi_{\bc,\ba}\phi)(X+V)-\partial^{\sigma}(\chi_{\bc,\ba}\phi)(X)|^2}{(|V|^2)^{u+d+2\delta}}\,dXdV.
$$

As in the previous case, $\mathcal{S}_1$ is bounded by $\|\phi\|_{H^t}^2$. To estimate $\mathcal{S}_2$, let us write
% We estimate $\mathcal{S}_2$ using the formula of the derivatice of the product  and elementary inequalities we have
$$
R(\ba,\bc,\sigma)=R_1(\ba,\bc,\sigma) + R_2(\ba,\bc,\sigma) + R_3(\ba,\bc,\sigma) + R_4(\ba,\bc,\sigma)
$$ 
with
$$
R_1=\int_{\T^u \times \R^d}\int_{\R^u \times \R^d}|\chi_{\bc,\ba}(X)|^2\frac{|\partial^{\sigma}\phi(X+W)-\partial^{\sigma}\phi(X)|^2}{(|W|^2)^{u+d+2\delta}}\,dXdW
$$

$$
R_2=\int_{\T^u \times \R^d}\int_{\R^u \times \R^d}\frac{|\chi_{\bc,\ba}(X+W)-\chi_{\bc,\ba}(X)|^2}{(|W|^2)^{u+d+2\delta}}|\partial^{\sigma}\phi(X+W)|^2\,dXdW
$$

$$
R_3=\sum_{\sigma'<\sigma}\int_{\T^u \times \R^d}\int_{\R^u \times \R^d}|\partial^{\sigma-\sigma'}\chi_{\bc,\ba}(X)|^2\frac{|\partial^{\sigma'}\phi(X+U)-\partial^{\sigma'}\phi(X)|^2}{(|W|^2)^{u+d+2\delta}}\,dXdW
$$

\begin{align*}
R_4=\sum_{\sigma'<\sigma}\int_{\T^u \times \R^d}\int_{\R^u \times \R^d}\frac{|\partial^{\sigma-\sigma'}\chi_{\bc,\ba}(X+U)-\partial^{\sigma-\sigma'}\chi_{\bc,\ba}(X)|^2}{(|W|^2)^{u+d+2\delta}}&|\partial^{\sigma'}\phi(X+W)|^2\,dXdW
\end{align*}

So, $\mathcal{S}_1+\sum_{(\ba,\bc)\in A^q\times A^p}\sum_{|\sigma|=t}R_1(\ba,\bc,\sigma)$ is bounded by $\|\phi\|_{H^s}^2$ and
$R_2+R_3+R_4$ is bounded by $K(s)\|\phi\|_{H^{t}}^2$. Using Young's inequality again, we have \eqref{partition.1}.

For inequality \eqref{partition.2}, note that the closure of each $\mathcal{R}_*(\bc)$ intersects at most $r$ closures of the sets $\overline{\mathcal{R}_*(\tilde{\bc})}$, since the Markov partition is formed by $r$ sets. Then:
\begin{align*}
\|\phi\|^2_{H^s} = \sum_{c,c'} \langle \chi_c \phi, \chi_{c'} \phi \rangle_{H^s} = \sum_{\cR_*(c)\cap\cR_*(c')=\emptyset} \langle \chi_c \phi, \chi_{c'} \phi \rangle_{H^s} + \sum_{\cR_*(c)\cap\cR_*(c')\neq \emptyset} \langle \chi_c \phi, \chi_{c'} \phi \rangle_{H^s}.
\end{align*}

If $\cR_*(c)\cap\cR_*(c')=\emptyset$, then Remark \ref{support} implies that
$\langle \chi_c\phi, \chi_{c'}\phi \rangle_{H^s} \leq K \| \chi_c\phi \|_{L^1} \|\chi_{c'}\phi\|_{L^1}$,
which gives:
$$\sum_{\cR_*(c)\cap\cR_*(c')=\emptyset} \langle \chi_c \phi, \chi_{c'} \phi \rangle_{H^s}\leq K \|\phi\|^2_{L^1}.$$

If $\cR_*(c)\cap\cR_*(c')\neq\emptyset$, then $\langle \chi_c\phi, \chi_{c'}\phi \rangle_{H^s}  \leq \frac{\|\chi_c\phi\|^2_{H^s} + \| \chi_{c'}\phi\|^2_{H^s}}{2}$. So, we have
$$
\sum_{\cR_*(c)\cap\cR_*(c')\neq \emptyset} \langle \chi_c \phi, \chi_{c'} \phi \rangle_{H^s}\le r\sum_{\bc \in A^p} \|\chi_\bc \phi\|^2_{H^s}.
$$

\end{proof}

\begin{lemma}\label{change.of.variables} Given $0\le s\le r $,  $\ba\in\mathcal{A}^q$ and $\bc\in\mathcal{A}^p$, there exists a constant $K>1$ such that
	\begin{equation}\label{change}
	\|\cL^q(\chi_{\bc\ba}\phi)\|^2_{H^s} \leq \frac{K }{(|\det E| |\det C| \fm(C)^{2s}))^{q}} \|\chi_{\bc,\ba}\phi\|^2_{H^s} %+ K_{12}(q) \|\phi\|^2_{L^1}
	\end{equation}
for every $\phi \in C^r(D)$.
\end{lemma}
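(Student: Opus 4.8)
This is a change-of-variables estimate for a single inverse branch of $T^q$, and the plan is to separate the part of that branch carrying the Jacobian and singular-value weights — handled exactly by Plancherel — from a shear with uniformly bounded geometry — handled by Remark \ref{composition}.

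First I would use the definition of $\chi_{\bc\ba}$, namely $\chi_{\bc\ba}\circ E^{-q}_{\bc,\ba}=\chi_\bc$ on $\mathcal{R}_*(\bc)$ and $\chi_{\bc\ba}=0$ elsewhere, to see that $\chi_{\bc\ba}$ is supported on $E^{-q}_{\bc,\ba}(\mathcal{R}_*(\bc))$, a set meeting exactly one $T^q$-preimage of each point of $\mathcal{R}_*(\bc)\times\R^d$. Hence, just as in the computation preceding Lemma \ref{estimar.transf.vs.l1}, and using \eqref{inverse.branch} together with $|\det DT^q|=|\det E|^q|\det C|^q$,
\[
\cL^q(\chi_{\bc\ba}\phi)=\frac{1}{|\det E|^q|\det C|^q}\,(\chi_{\bc\ba}\phi)\circ T^{-q}_{\bc,\ba},\qquad T^{-q}_{\bc,\ba}(x,y)=\bigl(E^{-q}_{\bc,\ba}x,\ C^{-q}(y-S^q_{\bc,\ba}(x))\bigr).
\]
The supports of $\chi_{\bc\ba}\phi$, of $\cL^q(\chi_{\bc\ba}\phi)$, and of $(\chi_{\bc\ba}\phi)\circ L$ (with $L$ as below) all have base component of diameter $<\gamma<1/2$, so I would pass to the Euclidean norm $\|\cdot\|_{H^s(\R^u\times\R^d)}$, which is equivalent to the intrinsic one up to a constant on such sets; in charts, $T^{-q}_{\bc,\ba}$ becomes a $C^r$ map of an open subset of $\R^u\times\R^d$ whose base component is affine with linear part $E^{-q}$.

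Next I would factor $T^{-q}_{\bc,\ba}=L\circ\Psi$, where $\Psi(x,y)=(x,\ y-S^q_{\bc,\ba}(x))$ is a shear and $L(x,y)=(E^{-q}x,\ C^{-q}y)$ is linear (the translation in the first coordinate is dropped, changing no $H^s$ norm). By \eqref{alpha_0} the derivatives of $S^q_{\bc,\ba}$ of orders $1,\dots,r$ are bounded by $\alpha_0$, so $\Psi$ is a $C^r$ diffeomorphism with $\det D\Psi\equiv 1$ whose derivatives up to order $\lfloor s\rfloor$ are bounded independently of $q$; Remark \ref{composition} then yields $\|h\circ\Psi\|_{H^s}\le K\|h\|_{H^s}$ with $K$ independent of $q$. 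For the linear part, Plancherel on $\R^u\times\R^d$ gives
\[
\|h\circ L\|_{H^s}^2=|\det L|^{-1}\int_{\R^u\times\R^d}|\hat h(\xi,\eta)|^2\bigl(1+|(E^{-q})^{*}\xi|^2+|(C^{-q})^{*}\eta|^2\bigr)^{s}\,d\xi\,d\eta,
\]
and since $\|(E^{-q})^*\|=\|E^{-q}\|\le\|E^{-1}\|^q<1\le\|C^{-q}\|=\|(C^{-q})^*\|$, the symbol is at most $\|C^{-q}\|^{2s}(1+|\xi|^2+|\eta|^2)^{s}$; moreover $\|C^{-q}\|^{2s}=\fm(C^q)^{-2s}\le\fm(C)^{-2sq}$ because $\fm(C^q)=\|C^{-q}\|^{-1}\ge\|C^{-1}\|^{-q}=\fm(C)^q$ and $s\ge 0$. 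As $|\det L|^{-1}=(|\det E|\,|\det C|)^q$, this gives $\|h\circ L\|_{H^s}^2\le(|\det E|\,|\det C|)^q\,\fm(C)^{-2sq}\,\|h\|_{H^s}^2$.

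Finally, with $h=\chi_{\bc\ba}\phi$ I would chain the three estimates to obtain
\[
\|\cL^q(\chi_{\bc\ba}\phi)\|_{H^s}^2\le\frac{K^2(|\det E||\det C|)^q\fm(C)^{-2sq}}{(|\det E|^q|\det C|^q)^2}\,\|\chi_{\bc\ba}\phi\|_{H^s}^2=\frac{K^2}{(|\det E||\det C|\fm(C)^{2s})^q}\,\|\chi_{\bc\ba}\phi\|_{H^s}^2,
\]
which is the claim after renaming the constant (and the constant is in fact uniform in $q$, $\ba$, $\bc$, as is needed when this lemma feeds into Proposition \ref{l.norma.sobolev}). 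I expect the main obstacle to be only bookkeeping: verifying that $\cL^q(\chi_{\bc\ba}\phi)$ really collapses to the single inverse branch $T^{-q}_{\bc,\ba}$ — which is precisely the purpose of the definition of $\chi_{\bc\ba}$ — and that the reduction to Euclidean charts is harmless, since all supports occurring have base diameter $<\gamma$. The non-integer case of $s$ requires no separate argument, both Plancherel and Remark \ref{composition} being valid for all $0\le s\le r$.
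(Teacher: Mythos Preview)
Your proof is correct and takes a cleaner route than the paper's. The paper proceeds in two steps: for integer $s$ it expands $\partial^\sigma[(\chi_{\bc\ba}\phi)\circ T^{-q}_{\bc,\ba}]$ directly via the chain rule (Remark~\ref{composition}), bounds each coefficient $\psi_{\sigma',\sigma}$ by $K\|C^{-q}\|^{s}$ using that every partial of every component of $T^{-q}_{\bc,\ba}$ is at most $\alpha_0\|C^{-q}\|$, and then changes variables; for non-integer $s$ it invokes the interpolation Claim~\ref{interpolacao} between the two neighbouring integers. Your factorization $T^{-q}_{\bc,\ba}=L\circ\Psi$ separates the geometry more transparently: the shear $\Psi$ has $C^r$ bounds depending only on $\alpha_0$ (hence $K(\Psi)$ is uniform in $q,\ba,\bc$), while the linear map $L$ is handled \emph{exactly} by Plancherel, producing the weight $(|\det E||\det C|)^{q}\fm(C)^{-2sq}$ in one stroke for all real $s\ge 0$. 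This eliminates the need for the interpolation step and makes the uniformity of the constant in $q$ immediate. The only minor bookkeeping points---that $\cL^q(\chi_{\bc\ba}\phi)$ collapses to a single branch and that one may pass to Euclidean charts because all base supports have diameter $<\gamma<1/2$---you have addressed correctly.
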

\begin{proof} Let us first consider $s$ integer.
	Recalling that $T_{\bc,\ba}^{-q}$ is an inverse branch defined over $\cR(\bc) \times \R^d$ by $T_{\bc,\ba}^{-q}(x,y)=(\sE_{\bc,\ba}^{-q}x,\sC^{-q}(y-S_{\bc}(x,\ba)))$. If we call by $g_1, g_2, \dots, g_{u+d}$ the components of $T^{-q}_{\bc,\ba}$ then we may observe that $|\partial^{\sigma}g_j\|\le \alpha_0 \|C^{-q}\|$ for all $\sigma$ multi-index with $|\sigma| \le r$.

%For simplicity, in the sequel we write $z$, $dz$ and $\sigma$ in the place of $(x,y)$, $dxdy$ and $(\alpha,\beta)$, respectively.
Noticing that $\cL^q(\chi_{\bc,\ba}\phi) =\frac{(\chi_{\bc,\ba}\phi) \circ T_{\bc,\ba}^{-q}}{|\det DT|^q} $ and
	 recalling the formula for differentiation of the composition (Remark \ref{composition}), we have
\begin{align*}
	 	|\det DT|^{2q} \|\cL^q(\chi_{\bc,\ba}\phi)\|^2_{H^s} &= |\det DT|^{2q} \sum_{|\sigma|{\le}s} \int |\partial^{\sigma}\cL^q(\chi_{\bc,\ba}\phi)(z) |^2 dz \\
	 	&\leq \sum_{|\sigma|{\le}s} \int |\partial^{\sigma} [(\chi_{\bc,\ba}\phi)\circ T_{\bc,\ba}^{-q}](z)|^2 dz \\
		&\leq \sum_{|\sigma|{\le}s} \int |\sum_{|\sigma'|\le|\sigma|} (\partial^{\sigma'} \chi_{\bc,\ba}\phi)\circ T_{\bc,\ba}^{-q}(z) \psi_{\sigma',\sigma}(z)|^{2} dz
	 	\end{align*}

Above we used Remark~\ref{composition} to write $\partial^{\sigma}[(\chi_{\bc,\ba}\phi)\circ T_{\bc,\ba}^{-q}](z)$ as $
\sum_{|\sigma'|\le|\sigma|}  \psi_{\sigma',\sigma}(z) (\partial^{\sigma'} \chi_{\bc,\ba}\phi)\circ T_{\bc,\ba}^{-q}(z),
$
where $\psi_{\sigma',\sigma}$ is a polynomial function of degree at most $s$ in the variables $\partial^{\gamma}g_j$, where $\gamma$ goes through the multi-indexes with
$|\gamma|\leq |\sigma'|$ and $j=1,2,\dots,u+d$.

Noticing that $|\psi_{\sigma',\sigma}(z)|\le K\|\sC^{-q}\|^s$
% and
%\begin{equation}\label{lipschitz}
%  |D \psi_{\sigma',\sigma}(z)|\le \tilde{K}\|\sC^{-q}\|^s
%\end{equation}
for some constant $K$, % and %for every $\sigma', \sigma$ with $|\sigma'|\le |\sigma|\le r$.
we have

\begin{align}\label{sobolev.changeofvariables}
\nonumber	 	|\det DT|^{2q} \|\cL^q(\chi_{\bc,\ba}\phi)\|^2_{H^s} &\leq K^2\|\sC^{-q}\|^{2s} \sum_{|\sigma|{\le}s} \int \big(\sum_{|\sigma'|\le|\sigma|} |(\partial^{\sigma'} \chi_{\bc,\ba}\phi)\circ T_{\bc,\ba}^{-q}(z)|\big)^2 dz\\
\nonumber	 	&\leq K^2\|\sC^{-q}\|^{2s}|\det DT^q| \sum_{|\sigma|{\le}s} \int \big(\sum_{|\sigma'|\le|\sigma|} |(\partial^{\sigma'} \chi_{\bc,\ba}\phi)(z)|\big)^2 dz\\
\nonumber &\le K\|\sC^{-q}\|^{2s}|\det DT^q| \sum_{|\sigma|{\le}s} \int |(\partial^{\sigma} \chi_{\bc,\ba}\phi)(z)|^2 dz\\
\nonumber &= K\|\sC^{-q}\|^{2s}|\det DT^q|\|\chi_{\bc,\ba}\phi\|_{H^s}^2.
\end{align}

This implies \eqref{change} in this case. %. that
%\begin{equation}\label{ineq}
%	\|\cL^q(\chi_{\bc,\ba}\phi)\|^2_{H^s} \leq \frac{K}{(|\det \sE|\cdot|\det \sC| \fm(C)^{2s} )^q} \|\chi_{\bc,\ba}\phi\|^2_{H^s}
%\end{equation}
	 	
 		For non-integers values of $s$, we consider integers $s_0$ and $s_1$ with $0\leq s_0 \leq s \leq s_1 \leq r$. Since $\phi$ is $C^r(D)$, then it is in $H^{s_0}$ and $H^{s_1}$.
 		Applying Claim \ref{interpolacao} for $K_0 = \big( \frac{K}{(|\det \sE|\cdot|\det \sC| \fm(C)^{2s_0} )^q}\big)^{\frac{1}{2}} $ and $K_1= \big(\frac{K}{(|\det \sE|\cdot|\det \sC| \fm(C)^{2s_1} )^q} \big)^{\frac{1}{2}} $, it follows \eqref{change}.

\end{proof}

Now we can proceed to the Proof of Lemma \ref{l.norma.sobolev}.

\begin{proof}[Proof of Lemma \ref{l.norma.sobolev}]
ByLemma \ref{partition.unity}, we have
\begin{align*}
\| \cL^q \phi\|^2_{H^s} &\leq K \sum_{\bc \in A^p} \|\chi_\bc \cL^q \phi\|^2_{H^s} + K \|\phi\|^2_{L^1} \\
&\leq K \sum_{\bc \in A^p} \| \sum_{\ba\in A^q} \cL^q (\chi_{\bc\ba}\phi)  \|^2_{H^s} + K \|\phi\|^2_{L^1} \\
&= K \sum_{\bc\in A^p,\,\ba,\bb \in A^q} \langle \cL^q(\chi_{\bc\ba}\phi),\cL^q(\chi_{\bc\bb}\phi) \rangle_{H^s} + K \|\phi\|^2_{L^1}.
\end{align*}

In the following, we estimate $\langle \cL^q(\chi_{\bc\ba}\phi),\cL^q(\chi_{\bc\bb}\phi) \rangle_{H^s}$ dividing it into 2 cases: when $\ba \pitchfork_\bc \bb$ and when $\ba \not\pitchfork_\bc \bb$

%\textbf{(Case 1)}
 If $\ba \pitchfork_\bc \bb$, by Lemma \ref{transversality.lemma},  for every $(\xi,\eta) \neq (0,0)$ we have  either $(DT_x^q)^*(\xi,\eta)\in \cC_1^*$ for all $x\in \cR(\bc\ba)$ or $(DT^q_x)^*(\xi,\eta)\in \cC_1^*$ for all $x\in\cR(\bc\bb)$. Denote by $U$ the set of $(\xi,\eta)$ such that the first occurs and $V$ the set such that the second occurs.

Then, if $(\xi,\eta)\in U$, by Lemma \ref{estimar.transf.vs.l1} we have
\begin{align}
|\cF(\cL^q(\chi_{\bc,\ba}\phi))(\xi,\eta)|
&\leq K ( 1+ |\xi|^2 + |\eta|^2)^{-\frac{\rho_0}{2}} \|\phi\|^\dagger_{\rho_0}.
\end{align}

Remind that $\|\cL^q \phi\|_{H^s} \leq K^q \|\phi\|_{H^s}$ since the operator is bounded in $H^s$ (see Remark \ref{composition}).
So, by Cauchy-Schwarz

\begin{align*}
\Big| \sum_{\xi}\int_U &(1+|\xi|^2+|\eta|^2)^s \cF\cL^q(\chi_{\bc,\ba}\phi)(\xi,\eta) \overline{\cF\cL^q(\chi_{\bc,\bb}\phi)}(\xi,\eta) d\eta   \Big| \\
	&\leq \left( \sum_{\xi}\int_U (1+|\xi|^2+|\eta|^2)^s |\cF\cL^q(\chi_{\bc,\ba}\phi)(\xi,\eta)|^2  d\eta  \right)^{1/2} \|\cL^q(\chi_{\bc,\bb}\phi)\|_{H^s}\\
	&\leq K \sum_{\xi} \int (1+|\xi|^2+|\eta|^2)^{s-\rho_0} \|\phi\|^\dagger_{\rho_0}d\,\eta \leq K(q) \|\phi\|^\dagger_{\rho_0} \|\phi\|_{H^s},
\end{align*}
where we used that the integral is finite since $s-\rho_0<-(u+d)/2$.

Summing with the same integrals over $V$ instead of $U$, we obtain that
\begin{equation}\label{eq. transv}
\langle \cL^q(\chi_{\bc\ba}\phi),\cL^q(\chi_{\bc\bb}\phi) \rangle_{H^s} \leq  K(q) \|\phi\|^\dagger_{\rho_0} \|\phi\|_{H^s}
\end{equation}

If $\ba \not\pitchfork_\bc \bb$, we use
\begin{equation}
\langle \cL^q(\chi_{\bc\ba}\phi),\cL^q(\chi_{\bc\bb}\phi) \rangle_{H^s} \leq \frac{\|\cL^q(\chi_{\bc\ba}\phi)\|^2_{H^s}+ \|\cL^q(\chi_{\bc\bb}\phi)\|^2_{H^s} }{2}
\end{equation}
together the definition of $\tau(q)$ to obtain
\begin{equation}\label{eq. naotransversal}
\sum_{\ba \not\pitchfork_\bc \bb} \langle \cL^q(\chi_{\bc\ba}\phi),\cL^q(\chi_{\bc\bb}\phi) \rangle_{H^s} \leq \tau(q) \sum_{\ba} \|\cL^q(\chi_{\bc\ba}\phi)\|^2_{H^s}
\end{equation}

Using it,
Lemma \ref{partition.unity} and Lemma \ref{change.of.variables}, we have

\begin{align}
 \nonumber \sum_{\bc\in A^p,\,\ba\not\pitchfork_\bc \bb} \langle \cL^q(\chi_{\bc,\ba}\phi)&,\cL^q(\chi_{\bc,\bb}\phi) \rangle_{H^s} \leq \frac{K \tau(q)}{(|\det E| |\det C| \fm(C)^{2s})^q} \sum_{\ba,\bc} \| \chi_{\bc,\ba} \phi \|^2_{H^s} \\%+ C \|\phi\|_{L^1}^2 \\
 &\leq \frac{K \tau(q)}{(|\det E| |\det C| \fm(C)^{2s})^q}  \|  \phi \|^2_{H^s} + K(q) \|\phi\|_{L^1}^2. \label{ntransv}
\end{align}

Since $\|\cdot\|_{L^1} \leq \|\cdot \|^\dagger_{\rho}$ and $\|\cdot\|_{L^1} \leq K\|\cdot \|_{L^2} \leq K\|\cdot\|_{H^s}$, we may use \eqref{eq. transv} and \eqref{ntransv} to get the estimate below

\begin{align*}
\|\mathcal{L}^q \phi\|_{H^s}^2 &\leq \sum_{\bc\in A^p,\,\ba\not\pitchfork_\bc \bb} \langle \cL^q(\chi_{\bc\ba}\phi),\cL^q(\chi_{\bc\bb}\phi) \rangle_{H^s} + \sum_{\bc\in A^p,\,\ba\pitchfork_\bc \bb} \langle \cL^q(\chi_{\bc\ba}\phi),\cL^q(\chi_{\bc\bb}\phi) \rangle_{H^s} + K \|\phi\|_{L^1}\\
&\leq \frac{K \tau(q)}{(|\det E| |\det C| \fm(C)^{2s})^q}  \|  \phi \|^2_{H^s} +K \|\phi\|^2_{L^1}+    K(q) \|\phi\|^\dagger_{\rho_0} \|\phi\|_{H^s}  \\
&\leq  \frac{K \tau(q)}{(|\det E| |\det C| \fm(C)^{2s})^q}  \|  \phi \|^2_{H^s} + K(q)  \|\phi\|^\dagger_{\rho_0} \|\phi\|_{H^s}
\end{align*}
\end{proof}

\section{Proof of Theorems \ref{teo.1} and \ref{teo.2}}

Let us proceed to put together the two main Lasota-Yorke inequalities to obtain the third Lasota-Yorke inequality of this work, from which will follow Theorems 1 and 2.

\subsection{Third Lasota-Yorke (for $\|\phi\|=\|\phi\|_{H^s} + \|\phi\|^\dagger_{\rho_0}$)}

Putting the two Main Inequalities together, we obtain a third Lasota-Yorke.

\begin{proposition}[Third Lasota-Yorke]\label{togheter} Given $q\in \N$ satisfying $B_1 \frac{ \tau(q)}{(|\det DT| {\fm(C)^{2s}})^q} < 1$ and integers $0 \leq \rho_1<\rho_0 \leq r-1$ with $s<\rho_0-u-d$, consider $\nu=\nu(\rho_0,\rho_1) := \sum_{j=\rho_1+1}^{\rho_0}\frac{1}{j}  $ and some
	\begin{equation}\label{intervalo}
	\zeta \in \Big( \max\big\{ \|E^{-1}\|^{{\frac{1}{\nu}}} , \big(\{B_1^{\frac{1}{q}} \frac{\tau(q)^{1/q}}{|\det DT| {\fm(C)^{2s}}}\big)^{\frac{1}{2}} \big\}  , 1   \Big).
	\end{equation}
	
	Consider also the norm $\|\phi\|:=\|\phi\|_{H^s} + \|\phi\|^\dagger_{\rho_0}$, then there exists a constant $K$ such that for all $n\in \N$,
	\begin{equation}
	\|\cL^n \phi \| \leq K \zeta^n \|\phi\| + K \| \phi\|^\dagger_{\rho_1}.
	\end{equation}	
\end{proposition}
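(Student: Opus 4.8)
The plan is to combine the two Main Lasota-Yorke inequalities (Propositions \ref{l.norma cruz} and \ref{l.norma.sobolev}) to control the combined norm $\|\phi\| = \|\phi\|_{H^s} + \|\phi\|^\dagger_{\rho_0}$. The strategy is the standard one for such compound norms: first establish the estimate for a single well-chosen iterate $\cL^q$ (where $q$ is the fixed integer with $B_1\frac{\tau(q)}{(|\det DT|\fm(C)^{2s})^q}<1$), and then bootstrap to all powers $\cL^n$ by writing $n = mq + j$ and iterating, absorbing the constants into a geometric series. I would proceed as follows.

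\emph{Step 1 (interpolating the $\dagger$-norms).} The First Main Lasota-Yorke (Proposition \ref{l.norma cruz}) gives, for any $\delta \in (\|E^{-1}\|,1)$ and each $1\le\rho\le r-1$, an estimate $\|\cL^n h\|^\dagger_\rho \le K\delta^{\rho n}\|h\|^\dagger_\rho + K(n)\|h\|^\dagger_{\rho-1}$. Applying this successively for $\rho = \rho_1+1, \rho_1+2, \dots, \rho_0$ and feeding each output into the next (so that the lower-order term from level $\rho$ becomes the leading term for level $\rho-1$), one obtains, after choosing $\delta$ close enough to $\|E^{-1}\|$, an inequality of the form $\|\cL^n h\|^\dagger_{\rho_0} \le K\big(\delta^{\rho_1+1}\big)^n \|h\|^\dagger_{\rho_0} + K(n)\|h\|^\dagger_{\rho_1}$, or more carefully the product $\delta^{(\rho_1+1)+\cdots}$; the point is that the relevant contraction rate after telescoping through $\nu = \sum_{j=\rho_1+1}^{\rho_0}\frac1j$ behaves like $\|E^{-1}\|^{1/\nu}$ per unit — this is why $\zeta > \|E^{-1}\|^{1/\nu}$ appears in \eqref{intervalo}. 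The bookkeeping here is the familiar ``cascade'' argument; I would be careful to track that the leftover terms are all controlled by $\|h\|^\dagger_{\rho_1}$ and that the multiplicative constants $K(n)$ do not depend on $h$.

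\emph{Step 2 (the Sobolev part for the iterate $\cL^q$).} By Proposition \ref{l.norma.sobolev}, $\|\cL^q\phi\|^2_{H^s} \le B_1\frac{\tau(q)}{(|\det DT|\fm(C)^{2s})^q}\|\phi\|^2_{H^s} + K(q)\|\phi\|_{H^s}\|\phi\|^\dagger_{\rho_0}$. Taking square roots (using $\sqrt{a+b}\le\sqrt a + \sqrt b$ and Young's inequality $\sqrt{\|\phi\|_{H^s}\|\phi\|^\dagger_{\rho_0}} \le \epsilon\|\phi\|_{H^s} + C_\epsilon\|\phi\|^\dagger_{\rho_0}$) gives $\|\cL^q\phi\|_{H^s} \le \big(\sqrt{B_1\tau(q)/(|\det DT|\fm(C)^{2s})^q} + \epsilon\big)\|\phi\|_{H^s} + K(q,\epsilon)\|\phi\|^\dagger_{\rho_0}$. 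Since by hypothesis the quantity under the root is $<1$, for $\epsilon$ small the coefficient of $\|\phi\|_{H^s}$ is strictly below $1$, and in fact below any $\zeta^q$ with $\zeta$ in the range \eqref{intervalo} (this is precisely the second term in the $\max$ in \eqref{intervalo}). Combining with Step 1 applied with $n=q$, and noting $\|\cdot\|^\dagger_{\rho_0}$ already appears in $\|\cdot\|$ while $\|\phi\|^\dagger_{\rho_0}\le\|\phi\|$ lets the cross term from the Sobolev estimate be absorbed, I obtain
\begin{equation*}
\|\cL^q\phi\| \le \Lambda\,\|\phi\| + K\|\phi\|^\dagger_{\rho_1}
\end{equation*}
for some $\Lambda < \zeta^q$.

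\emph{Step 3 (bootstrapping to all $n$).} Write $n = mq + j$ with $0\le j<q$. Apply the Step-2 inequality $m$ times and handle the remaining $j$ iterates with the (uniformly bounded for bounded $j$) crude bounds $\|\cL^j\phi\|_{H^s}\le K\|\phi\|_{H^s}$ (Remark \ref{composition}) and $\|\cL^j\phi\|^\dagger_{\rho_0}\le K\|\phi\|^\dagger_{\rho_0}$ (Proposition \ref{l.norma cruz}). Iterating $\|\cL^{kq}\phi\| \le \Lambda^k\|\phi\| + K\sum_{i=0}^{k-1}\Lambda^i\|\cL^{(k-1-i)q}\phi\|^\dagger_{\rho_1}$ and using that $\|\cL^\ell\phi\|^\dagger_{\rho_1}$ is bounded (via the $\rho_1$-analogue of the cascade, or simply by \eqref{segunda.estimativa}-type crude bounds chained up to level $\rho_1$, which only costs a fixed constant per step up to the fixed level $\rho_1$), the sum $\sum\Lambda^i$ converges and one gets $\|\cL^{kq}\phi\| \le \Lambda^k\|\phi\| + K\|\phi\|^\dagger_{\rho_1}$. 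Since $\Lambda<\zeta^q$, we have $\Lambda^k = (\Lambda^{1/q})^{kq} \le \zeta^{kq}$ up to a constant, and reintroducing the $j$-remainder gives $\|\cL^n\phi\| \le K\zeta^n\|\phi\| + K\|\phi\|^\dagger_{\rho_1}$, as claimed.

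\emph{Main obstacle.} The delicate point is Step 1: the telescoping through the $\dagger$-norms must produce exactly the exponent $1/\nu$ with $\nu = \sum_{j=\rho_1+1}^{\rho_0}\frac1j$, and one has to verify that choosing $\delta$ arbitrarily close to $\|E^{-1}\|$ in Proposition \ref{l.norma cruz} at each level really yields an effective per-iterate rate that can be pushed below $\zeta$ whenever $\zeta > \|E^{-1}\|^{1/\nu}$. This requires keeping the $\rho$-dependence of the exponents straight (the level-$\rho$ estimate contracts at rate $\delta^{\rho n}$, so the bottleneck after normalizing is the geometric mean weighted by $1/j$), and making sure the constants $K(n)$ generated along the way, though growing in $n$, are dominated after the geometric summation in Step 3. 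The rest is routine Young's-inequality manipulation and is by now standard in the Gouëzel–Liverani framework.
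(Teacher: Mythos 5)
Your overall strategy is the same as the paper's: upgrade the $\dagger$-cascade to a rate $\delta^{n/\nu}$ (your Step 1 is the content of Claims \ref{l. norma cruz 2} and \ref{l. norma cruz 3}, where the paper first makes the lower-order constant uniform in $n$ and then allocates time $n/\rho$ to each level $\rho$), combine with the Sobolev inequality at time $q$, and iterate. However, there is a genuine gap in your Step 2. You claim that, for the fixed $q$ of the hypothesis, the cross term $K(q,\epsilon)\|\phi\|^{\dagger}_{\rho_0}$ produced by Proposition \ref{l.norma.sobolev} can be ``absorbed'' simply because $\|\phi\|^{\dagger}_{\rho_0}\le\|\phi\|$, yielding $\|\cL^q\phi\|\le \Lambda\|\phi\|+K\|\phi\|^{\dagger}_{\rho_1}$ with $\Lambda<\zeta^q$. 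This does not follow: the constant $K(q,\epsilon)$ is large (it blows up as $\epsilon\to 0$), and the $\dagger$-component of $\|\cL^q\phi\|$ itself only contracts like $K\delta^{q/\nu}$ with a fixed constant $K\ge 1$ that need not be beaten by $\zeta^q$ at the fixed $q$. So with the unweighted compound norm and the single iterate $q$, the coefficient multiplying $\|\phi\|^{\dagger}_{\rho_0}$ (hence multiplying $\|\phi\|$) is a large constant, not something below $\zeta^q$, and your Step 3 then iterates an inequality that is false as stated.

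The paper closes exactly this hole with two devices you omit: it passes to a high iterate $l_0q$, with $l_0$ chosen so large that the cascade gives $\|\cL^{l_0q}\phi\|^{\dagger}_{\rho_0}\le \tfrac{\zeta^{l_0q}}{2}\|\phi\|^{\dagger}_{\rho_0}+K(l_0)\|\phi\|^{\dagger}_{\rho_1}$ (the factor $\tfrac12$ creates the needed margin over the uniform constant), and it then introduces the re-weighted auxiliary norm $\|\phi\|^{*}=\|\phi\|_{H^s}+2K(l_0)\zeta^{-l_0q}\|\phi\|^{\dagger}_{\rho_0}$, equivalent to $\|\cdot\|$. With this weight the cross term $K(l_0)\|\phi\|^{\dagger}_{\rho_0}$ from the Sobolev estimate is absorbed into the weighted $\dagger$-component (not into the $H^s$ component), giving $\|\cL^{l_0q}\phi\|^{*}\le\zeta^{l_0q}\|\phi\|^{*}+\tilde K(l_0)\|\phi\|^{\dagger}_{\rho_1}$, which can then be iterated and transferred back to $\|\cdot\|$ by norm equivalence. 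Your argument needs this re-weighting (or an equivalent large-iterate trick) to be correct; as written, the absorption step fails. A secondary, repairable point: in Step 1 you keep constants $K(n)$ growing with $n$ on the lower-order term and hope to dominate them later, whereas the cascade argument is cleaner if you first make that constant uniform in $n$ (as in Claim \ref{l. norma cruz 2}) before telescoping.
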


\begin{proof}[Proof of Proposition \ref{togheter}]

Let us begin this proof stating two  consequences of the first Main Inequality (Lemma \ref{l.norma cruz}). 

\begin{claim}\label{l. norma cruz 2}
  Let $\delta\in (\|E^{-1}\|,1)$. There exists $K>0$ such that, for $1\le \rho\le r-1$, for $n\in\N$,
  \begin{equation}
  \|\cL^nh\|_{\rho}^{\dagger}\le K\delta^{\rho n}\|h\|_{\rho}^{\dagger} + K\|h\|_{\rho-1}^{\dagger}.
\end{equation}
\end{claim}
\begin{proof} Take $N\in\N$ such that $K\|E^{-1}\|^{\rho N}\le \delta^{\rho N}$. Then,
  by Lemma \ref{l.norma cruz}, we have
  \begin{equation}\label{eq. norma cruz 2}
    \|\cL^Nh\|_\rho^{\dagger}\le \delta^{\rho N}\|h\|_\rho^{\dagger}+K(N)\|h\|_{\rho-1}^{\dagger}
  \end{equation}

Moreover, using $\|h\|_{\rho-1}^{\dagger}\le \|h\|_{\rho}^{\dagger}$, there exists $K=K(N)>0$ such that
\begin{equation}\label{eq. local1}
  \|\cL^jh\|_{\rho}^{\dagger}\le \delta^{rN}K\|h\|_{\rho}^{\dagger}\le \delta^{\rho j}K\|h\|_{\rho}^{\dagger}
\end{equation}
for all $1\le j \le N-1$ and $1\le\rho\le r-1$.

We prove by induction on $\rho$ that there exists a constant $K_\rho>0$ such that for every $n\in\N$
\begin{equation}
    \|\cL^nh\|_\rho^{\dagger}\le K_\rho\delta^{\rho n}\|h\|_\rho^{\dagger}+K_\rho\|h\|_{\rho-1}^{\dagger} \quad \text{ and } \quad 
  \|\cL^nh\|_{\rho}^{\dagger}\le K_\rho\|h\|_{\rho}^{\dagger}.
\end{equation}

Write $n=kN+j$, with $1\le j\le N-1$. For $\rho=1$ we have
$$
\begin{aligned}
    \|\cL^nh\|_1^{\dagger}=\|\cL^{kN}(\cL^jh)\|_1^{\dagger}&\le \delta^{kN}\|\cL^jh\|_1^{\dagger}+K(N)\sum_{i=0}^{k-1}\delta^{iN}\|\cL^{(k-1-i)N}(\cL^jh)\|_{0}^{\dagger}\\
                                                     &\le K\delta^{n}\|h\|_1^{\dagger}+K(N)\frac{K}{1-\delta^N}\|h\|_{0}^{\dagger}\\
                                                     &\le K_1\delta^n\|h\|_1^{\dagger}+K_1\|h\|_0^{\dagger}
\end{aligned}
$$
and
$$
\|\cL^nh\|_1^{\dagger}\le K_1\|h\|_1^{\dagger},
$$
where $K_1=2\max\{K,\ds\frac{K(N)A_0}{1-\delta^N}\}$.

Now, suppose the result is true for $\rho-1$, we prove for $\rho$.
$$
\begin{aligned}
    \|\cL^nh\|_\rho^{\dagger}=\|\cL^{kN}(\cL^jh)\|_\rho^{\dagger}&\le \delta^{kN}\|\cL^jh\|_\rho^{\dagger}+K(N)\sum_{i=0}^{N-1}\delta^{iN}\|\cL^{(k-1-i)N}(\cL^jh)\|_{\rho-1}^{\dagger}\\
                                                     &\le K\delta^{n}\|h\|_\rho^{\dagger}+K(N)\frac{K_{\rho-1}}{1-\delta^N}\|h\|_{\rho-1}^{\dagger}\\
                                                     &\le K_\rho\delta^n\|h\|_\rho^{\dagger}+K_\rho\|h\|_{\rho-1}^{\dagger}
\end{aligned}
$$
and
$$
\|\cL^nh\|_\rho^{\dagger}\le K_\rho\|h\|_\rho^{\dagger},
$$
where $K_\rho=2\max\{K,\ds\frac{K(N)K_{\rho-1}}{1-\delta^N}\}$. 
The result follows taking $K=\underset{1\leq i \leq r-1}{\max}\{K_i\}$.
\end{proof}

\begin{claim}\label{l. norma cruz 3}
	Given $\delta \in( \|E^{-1}\|,1)$ and integers $0\leq \rho_1 < \rho_0 \leq r-1$, let $\nu(\rho_0,\rho_1)$ be as before. Then there exists $K>0$ such that, for every $n \in \N$,
	\begin{equation}
	\|\cL^n h \|^\dagger_{\rho_0} \leq K \delta^{n/\nu(\rho_0,\rho_1)} \|h\|^\dagger_{\rho_0} + K \|h\|^\dagger_{\rho_1}.
	\end{equation}
\end{claim}
\begin{proof}
	Let $n$ be a multiple of $(r-1)!$, then we have by induction on $\rho \in [\rho_1 + 1, \rho_0]$ that
	\begin{equation}
	\|\cL^{n(\nu(\rho,\rho_1))} h \|^\dagger_\rho \leq K_{\rho} \delta^n \|h\|^\dagger_\rho + K_{\rho} \|h\|^\dagger_{\rho_1}.
	\end{equation}
	
	Actually, the case $\rho=\rho_1+1$ is immediately because $\nu(\rho_1 +1,\rho_1)=1/(\rho_1+1)$. Also, using Claim \ref{l. norma cruz 2}, the relation $n\nu(\rho+1,\rho_1)=n\nu(\rho,\rho_1)+\frac{n}{\rho+1}$ and the induction hypothesis, we have:
	\begin{align*}
		\|\cL^{n(\nu(\rho+1,\rho_1))} h \|^\dagger_{\rho+1} &= \|\cL^{\frac{n}{\rho+1}}(\cL^{n(\nu(\rho,\rho_1))} h ) \|^\dagger_{\rho+1}\\
		&\leq K \delta^n \|\cL^{n(\nu(\rho,\rho_1))} h\|^\dagger_{\rho+1} + K \|\cL^{n(\nu(\rho,\rho_1))} h\|^\dagger_{\rho}\\
		&\leq K \delta^n \big(K \delta^{n\nu(\rho,\rho_1) (\rho+1)}\|h\|_{\rho+1} + K \|h\|^\dagger_{\rho}\big) + \big(K_{\rho}\delta^n \|h\|^\dagger_{\rho}+ K_{\rho} \|h\|^\dagger_{\rho_1}\big)\\
		&\leq K_{{\rho +1}} \delta^n \|h\|^\dagger_{\rho +1} + K_{{\rho +1}} \|h\|^\dagger_{\rho_1}.
	\end{align*}
%where $K$ is given by Claim~\ref{l. norma cruz 2}, $K_{\rho+1}$ is given by the induction hypothesis and $K_{\rho +1}= 3\max\{K^2, K_\rho\}$.
	
	So we have the lemma for multiples of $(r-1)!\nu(\rho_0,\rho_1)$. For the general case, just notice that Claim \ref{l. norma cruz 2} also implies that $\cL$ is a bounded operator with respect to the norm $\|\cdot\|^\dagger_\rho$.
\end{proof}

Now we proceed to prove Lemma \ref{togheter}, noticing first that
	for $a,b>0$, we have that $\sqrt{a+b}\leq \sqrt{a} + \sqrt{b}$ and $\sqrt{ab} \leq \epsilon a + \epsilon^{-1}b$. So {Lemma~\ref{l.norma.sobolev}} implies that for every $\epsilon>0$
	\begin{equation*}
	\|\cL^q \phi \|_{H^s} \leq \Big( K \frac{\tau(q)^{1/q}}{|\det DT|{\fm(C)^{2s}} }\Big)^{q/2} \|\phi\|_{H^s} + \epsilon \|\phi\|_{H^s} + K(\epsilon)\|\phi\|^\dagger_{\rho_0}.
	\end{equation*}

Since $(K^{1/q} \frac{\tau(q)^{1/q}}{|\det DT| {\fm(C)^{2s}}}  )^{q/2} < \zeta^q$, for $\epsilon=\epsilon(q)$ small we have
    \begin{equation*}
	\|\cL^q \phi \|_{H^s} \leq \zeta^q \|\phi\|_{H^s}  + K(q) \|\phi\|^\dagger_{\rho_0}.
	\end{equation*}
	
	Iterating it $l$ times:
    \begin{equation}\label{ws}
	\|\cL^{lq} \phi \|_{H^s} \leq \zeta^{lq} \|\phi\|_{H^s}  + K(l) \|\phi\|^\dagger_{\rho_0}.
	\end{equation}
	
	Now,
	%A similar inequality was proved for $\|\cdot\|_{\rho_0}$. Actually,
	taking $\delta$ slightly {smaller} than $\zeta^{\nu}$ and $\l_0$ large enough such that $K (\delta^{\frac{1}{\nu}})^{l_0q}<\frac{\zeta^{l_0q}}{2}$, Claim \ref{l. norma cruz 3} implies for $l_0$
	\begin{equation}\label{rho_0}
	\|\cL^{l_0q} \phi \|_{\rho_0} \leq \frac{\zeta^{l_0q}}{2} \|\phi\|_{\rho_0}  + K(l_0) \|\phi\|^\dagger_{\rho_1}.
	\end{equation}
	
	Let us consider the auxiliary norm $\|\phi\|^* := \|\phi\|_{H^s} + 2K(l_0)\zeta^{-l_0q} \|\phi\|^\dagger_{\rho_0}$, which is equivalent to $\|\cdot\|$. Adding ($\ref{ws}$) and ($\ref{rho_0}$), it follows that:
	\begin{equation}
	\|\cL^{l_0q} \phi \|^* \leq \zeta^{l_0q} \|\phi\|^*  + \tilde{K}(l_0) \|\phi\|^\dagger_{\rho_1}.
	\end{equation}
	
	Iterating this inequality, it follows what we want for every $n$ but for the norm $\|\cdot\|^*$. Since they are equivalent norms, it follows the result for the norm $\|\cdot\|$.
\end{proof}

%\begin{remark}\label{r. uniformly_bounded}
%We observe that the sequence of operators $\cL^n$ is uniformly bounded over the space of the functions
%
%\end{remark}	

\subsection{Proof of Theorem \ref{teo.1}}

\begin{proof}[Proof of Theorem \ref{teo.1}]
Since $|\det DT| {\fm(C)}^{2s}>1$, the transversality condition implies that we can consider $q$ such that
$\ds\omega= \frac{B_1 \tau(q)}{(|\det DT|{\fm(C)}^{2s})^q}<1$.
	
Consider $\rho_0 = r-1$ and $\rho_1=0$. Since $s<r-u/2-d/2-1$, we have that $s+u/2+d/2<\rho_0 $, so we can apply Lemma  \ref{togheter} for some $\zeta$ between $\omega$ and $1$.

Let us fix some non-negative function $\psi_0 \in C^r(D)$  with $\|\psi_0\|_{L^1}=1$, $\nu_0=\psi_0 m$,   $\psi_n = \frac{1}{n} (\psi_0 + \cL\psi_0 + \cdots + \cL^{n-1}\psi_0)$ and $\nu_n=\psi_n m$. Then 
$\nu_n =  \frac{1}{n} \sum_{j=0}^{n-1}T^j_*\nu_0$.

Since $\mu$ is the SRV measure for $T$, for every $\phi \in C^0(D)$ we have that  $\frac{1}{n}\sum_{j=0}^{n-1}\phi\circ T^j(x)$ converges to $\int \phi \, d\mu$	for Lebesgue almost every $x$, therefore 
\begin{equation}\label{eq. density_Hs1}
  \int \phi d\nu_n=\int \frac{1}{n}\sum_{j=0}^{n-1}\phi\circ T^j\,d\nu_0 \to \int \phi \, d\mu.
\end{equation}

	On the other hand, Lemma~\ref{togheter} implies that there exists a constant $K>0$, such that $\|\cL^n\psi_0\|\le K\|\psi_0\|$, for all $n$. In particular, $\|\psi_n\|_{H^s}\le\|\psi_n\|\le K\|\psi_0\|$ for every $n$. So,  Banach-Alaoglu theorem implies that there is a subsequence $\{\psi_{n_k}\}_k$ which converges weakly to some function $\psi_{\infty} \in H^s$, then
\begin{equation}\label{eq. density_Hs2}
  \int \phi d\nu_{n_k}=\int \phi\,\psi_{n_k}\,dm \to \int \phi\,\psi_{\infty} \, dm
\end{equation}
for every $\phi\in C^{r}(D)$ with compact support. Hence $\mu=\psi_{\infty}m$ is an absolutely continuous invariant probability.

The openness in $(C,f)$  follows from the fact that  $\tau(q)$ is upper semi-continuous on $(C,f)\in \mathcal{C}(d) \times C^r(\torus,\R^d)$ and from the openness of the condition $B_{1}\frac{\tau(q)}{|\det E||\det C|\fm(C)^{2s}}<1$,
what concludes the proof of the theorem.

% Iterating Theorem \ref{togheter} and recalling \eqref{segunda.estimativa}, we have that $\|\psi_n\|$ and $\|\psi_n\|_{H^s}$ are uniformly bounded:
%	\begin{align*}
%	\|\psi_n\| &\leq \omega \|\psi_{n-1}\| + C \|\psi_n\|^\dagger_{\rho_0}  \\
%	&\leq \cdots\\
%	&\leq (1+\omega+\cdots+\omega^{n-1})\|\psi\| + C \| \psi_n\|_{\rho_0}^\dagger \\
%	&\leq \frac{1}{1-\omega}\|\psi\| + C A_0\|\psi\|_{\rho_0}^\dagger <+\infty.
%	\end{align*}
%	
%	Considering $\psi_\infty$ any accumulation point of $\psi_n$, we have that $\psi_\infty \in H^s$ and that $ \psi_{n_k} m \to \psi_\infty m$, which is an invariant measure $\lambda$.
%	
%	In particular, $\lambda$ is absolutely continuous and, thus, it coincides with the physical measure $\mu$ of $T$. So, we conclude that the density of the physical measure $\mu$ is in $H^s$.
\end{proof}

\begin{remark}\label{openness}
	It is important to mention that the transversality condition defined in Definition \ref{transversality.1} is not an open condition. What is open in $(C,f)$ is the condition $$\displaystyle \frac{B_1  \tau_f(q)}{|\det DT^q| \fm(C)^{2sq}} <1$$ for fixed $q$. %, and this second one is valid for infinitely many $q$'s if it holds (\ref{transversality.condition}).
\end{remark}

\subsection{Spectral Gap}

When $s>u/2$, we can apply a theorem of Hennion to obtain spectral properties of the action of the operator $\cL$ in a Banach space $\mathcal{B}$ contained in $H^s$ and containing $C^{r-1}(D)$. 

	Let us denote the spectral radius of $\cL:\mathcal{B}\to \mathcal{B}$ by
	$\rho(\cL)=  \lim_{n\to \infty} \sqrt[n]{\|\cL^n\|}$.
	We say that $\cL$ has spectral gap if there exist bounded operators $\cP$ and $\cN$ such that $\cL=\lambda \cP +\cN$, with $\cP^2=\cP$,  $\dim(\operatorname{im}(\cP))=1$, $\rho(\cN)<|\lambda|$ and $\cP\cN=\cN\cP=0$. %In this case, the essential  spectral radius of $\cL$ satisfies $\rho_{ess}(\cL) \leq  \rho(\cN)$. 

The spectral gap can be obtained as a standard consequence of a Theorem due to Hennion, Ionescu Tulcea-Marinescu, et al \cite{hennion, tulcea.marinescu}:

\begin{theorem*}[Hennion]\label{hennion}
	Let $L:(\mathcal{B},\|\cdot\|) \to (\mathcal{B},\|\cdot\|)$ be a bounded operator and $\|\cdot\|'$ be a norm in $\mathcal{B}$ such that
	\begin{enumerate}
		\item $\|\cdot\|'$ is continuous in $\|\cdot\|$.
		
		\item For every bounded sequence $\{\phi_n\}\in \mathcal{B}$, there exists a subsequence $\{\phi_{n_k}\}$ and $\psi \in \mathcal{B}$ such that $\|\phi_{n_k} - \psi \|'\to 0$.
		
		\item $\|L \phi\|' \leq M \| \phi\|'$ for some $M>0$ and every $\phi\in \mathcal{B}$.
		
		\item There exists $r \in(0,\rho(L))$ and $K>0$ such that for all $n\in \N$
		\begin{equation}\label{doeblin.fortet}
		\|L^n \phi\| \leq r^n \|\phi\| + K \|\phi\|'.
		\end{equation}
		\item There exists a unique eigenvalue $\lambda$ with $|\lambda|=\rho(L)$ and $\dim\operatorname{ker}(L-\lambda I)=1$.
	\end{enumerate}

		Then $L$ has spectral gap.
	\end{theorem*}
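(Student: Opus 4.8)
The plan is to deduce spectral gap from the classical Ionescu--Tulcea--Marinescu/Hennion scheme \cite{hennion} in two steps: first control of the essential spectrum (quasi-compactness), and then the promotion of the geometrically simple peripheral eigenvalue of hypothesis (5) to an algebraically simple one, which is exactly what the stated notion of spectral gap requires. For the first step I would invoke Nussbaum's formula $\rho_{\mathrm{ess}}(L)=\lim_{n}\beta(L^{n})^{1/n}$, where $\beta(S)$ is the measure of non-compactness of $S(B_{\mathcal{B}})$ for $\|\cdot\|$, $B_{\mathcal{B}}$ being the closed unit ball. Hypothesis (2) says precisely that $B_{\mathcal{B}}$ is totally bounded for $\|\cdot\|'$, and hypotheses (1) and (3) supply the compatibility of the two norms needed to read \eqref{doeblin.fortet} as an estimate on $\beta$: covering $B_{\mathcal{B}}$ by finitely many $\|\cdot\|'$-balls of radius $\epsilon$ centred at $\phi_{i}$ and applying \eqref{doeblin.fortet} to $\phi-\phi_{i}$ shows $\beta(L^{n})\le 2r^{n}+K\epsilon$ for every $\epsilon>0$, hence $\beta(L^{n})\le 2r^{n}$ and $\rho_{\mathrm{ess}}(L)\le r<\rho(L)$. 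Thus $L$ is quasi-compact: the part of $\sigma(L)$ outside the closed disc of radius $r$ is a finite set of eigenvalues of finite algebraic multiplicity.

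For the second step, let $\lambda$ be the eigenvalue of hypothesis (5), so $|\lambda|=\rho(L)>r$, it is the unique point of $\sigma(L)$ of modulus $\rho(L)$, and $\dim\ker(L-\lambda I)=1$. Let $\cP$ be the Riesz spectral projection of $L$ associated to $\{\lambda\}$; it is finite-rank, commutes with $L$, and $\rho\big(L(I-\cP)\big)<\rho(L)$ since quasi-compactness confines the remaining spectrum to a strictly smaller disc. It then suffices to prove $\operatorname{rank}\cP=1$: once this is known, setting $\cN:=L(I-\cP)=L-\lambda\cP$ gives $L=\lambda\cP+\cN$ with $\cP^{2}=\cP$, $\dim(\operatorname{im}(\cP))=1$, $\rho(\cN)<|\lambda|$ and $\cP\cN=\cN\cP=0$, which is spectral gap.

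To prove $\operatorname{rank}\cP=1$ I would argue by contradiction. If $\operatorname{rank}\cP\ge2$ then, because $\dim\ker(L-\lambda I)=1$, the operator $(L-\lambda I)|_{\operatorname{im}(\cP)}$ is a single nonzero nilpotent, so there is a Jordan pair $v,w\in\operatorname{im}(\cP)$ with $(L-\lambda I)v=w$, $(L-\lambda I)w=0$ and $w\neq0$. Then $L^{n}v=\lambda^{n}v+n\lambda^{n-1}w$, whence $\|L^{n}v\|\ge n|\lambda|^{n-1}\|w\|-|\lambda|^{n}\|v\|$, while \eqref{doeblin.fortet} forces $\|L^{n}v\|\le r^{n}\|v\|+K\|v\|'$ for every $n$. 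Since $|\lambda|=\rho(L)\ge1$ in the intended application (e.g.\ $L=\cL$, which preserves $\int$), the left-hand side grows like $n|\lambda|^{n}$, while the right-hand side is dominated by $r^{n}\|v\|$ (with $r<|\lambda|$) plus the fixed constant $K\|v\|'$; this is a contradiction for large $n$. Hence $\operatorname{rank}\cP=1$.

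The hard part will be the precise two-norm bookkeeping in the first step and, above all, the exclusion of a Jordan block: the abstract statement is only safe once one knows $\rho(L)\ge1$ (equivalently, from \eqref{doeblin.fortet} applied to a peripheral eigenvector, $\rho(L)=1$), since for $|\lambda|<1$ a Jordan chain would be compatible with \eqref{doeblin.fortet}. In our setting this holds automatically, and one may alternatively phrase the exclusion as a Cesàro-boundedness argument: from \eqref{doeblin.fortet}, once $|\lambda|\ge1$ one has $\|(L/\lambda)^{n}\phi\|\le(r/|\lambda|)^{n}\|\phi\|+K|\lambda|^{-n}\|\phi\|'\le\|\phi\|+K\|\phi\|'$ uniformly in $n$, so the averages $\tfrac1n\sum_{k<n}(L/\lambda)^{k}$ are $\|\cdot\|$-bounded, which already forbids a nontrivial nilpotent part on the peripheral eigenspace.
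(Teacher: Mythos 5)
The paper itself offers no proof of this statement: it is quoted as a classical result of Hennion and Ionescu Tulcea--Marinescu \cite{hennion, tulcea.marinescu} and then applied, so there is no internal argument to compare yours with. What you wrote is essentially the standard proof of the cited theorem, and it is correct in its first step: hypothesis (2) makes the unit ball totally bounded for $\|\cdot\|'$, the covering argument combined with \eqref{doeblin.fortet} bounds the measure of noncompactness of $L^n(B_{\mathcal B})$ by $2r^n+K\epsilon$ for every $\epsilon>0$, and Nussbaum's formula gives $\rho_{\mathrm{ess}}(L)\le r<\rho(L)$ (note that (1) and (3) are not even needed for this part; they enter the classical iterative formulation). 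Quasi-compactness then makes $\lambda$ an isolated spectral point which is an eigenvalue with finite-rank Riesz projection $\cP$, and, strictly speaking, the spectrum outside the closed disc of radius $r$ is only finite outside each disc of radius $r'>r$; this does not affect your conclusion that $\rho(L(I-\cP))<|\lambda|$.

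Your second step puts your finger on a genuine imprecision in the statement as written: hypothesis (5) gives only geometric simplicity, and ruling out a Jordan block at $\lambda$ via \eqref{doeblin.fortet} needs $|\lambda|=\rho(L)\ge 1$, so that the growth $n|\lambda|^{n-1}\|w\|$ beats the constant $K\|v\|'$. Without that the conclusion can fail: on $\C^2$ the single Jordan block with eigenvalue $\mu\in(0,1)$, with $\|\cdot\|'=\|\cdot\|$, any $r\in(0,\mu)$ and $K=\sup_n\|L^n\|$, satisfies (1)--(5) but admits no decomposition $L=\lambda\cP+\cN$ with $\dim(\operatorname{im}(\cP))=1$ and $\rho(\cN)<|\lambda|$. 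In the paper's application this is harmless, as you observe: \eqref{doeblin.fortet} together with (1) gives $\sup_n\|\cL^n\|<\infty$, while $\int\cL^n\phi\,dm=\int\phi\,dm$ and $\|\cdot\|_{L^1}\le\|\cdot\|^\dagger_{\rho_0}\le\|\cdot\|$ force $\rho(\cL)=1$, so your power-boundedness (Ces\`aro) argument excludes the nilpotent part. So your proof is correct for the intended setting, and it additionally makes explicit a hypothesis ($\rho(L)\ge1$, or semisimplicity of the peripheral eigenvalue) that the version stated in the paper leaves implicit.
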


Inequality (\ref{doeblin.fortet}) is sometimes known as Lasota-Yorke \cite{lasota.yorke} or Doeblin-Fortet \cite{doeblin.fortet} inequality for $L$ with respect to the spaces $\mathcal{B}$ and $\mathcal{B}'$. It is exactly the same kind of inequality that we proved in Lemmas \ref{l.norma cruz}, \ref{l.norma.sobolev} and \ref{togheter}.

\begin{definition}\label{decay.of.correlations} We say that
$(T,\mu)$ has \textbf{exponential decay of correlations} in a vector space $\mathcal{B} \subset L^1(\mu)$ with exponential rate at most $\zeta<1$ if for every $\phi \in \mathcal{B}$ and $\psi \in L^\infty(\mu)$, there exists a constant $K(\phi,\psi)>0$ such that
\begin{equation}\label{decaimento}
\left|\int \phi  (\psi\circ T^n) d\mu  - \int \phi d\mu \int \psi d\mu \right| \leq K(\phi,\psi) \zeta^n.
\end{equation}

%When $\mathcal{B} \subset L^2(\mu)$, we also say that $(T,\mu)$ has exponential decay of correlations with rate $\gamma<1$ if \eqref{decaimento} occurs for every $\phi \in \mathcal{B}$ and $\psi \in L^{2}(\mu)$. 
\end{definition}

%\red{Under some additional hypothesis} 
When the transfer operator $\cL$ has spectral gap, it follows that the dynamics has exponential decay of correlations with exponential rate at most $\rho(\mathcal{N})<1$, as given in the following Proposition.

\begin{proposition}\label{spectralgap.implies.decay}
	%Let $\cL$ be the transfer operator associated to $T$. 
Supposing that $\cL$ has spectral gap in some Banach space $\mathcal{B}$ embedded continuously in $L^2(m)$ 
	%\red{$\mathcal{B} \subset L^2(m) \subset L^1(\mu)$}
	 with $\rho(\cL |\mathcal{B})=1$ and $\rho(\cN)=\zeta<1$,
	 %\red{If $(\mathcal{B},\|\cdot\|)$ is a Banach algebra and $\|h\|_{L^1} \leq K \|h\|_{\mathcal{B}}$}
	if we consider $\phi_0\in \mathcal{B}$  a  nonnegative fixed point of $\cL$ satisfying $\int \phi_0 \,dm=1$ and $\mu=\phi_0 m$, then $(T,\mu)$ has exponential decay of correlations in $\mathcal{\tilde B}:=\{\phi\in \mathcal{B},  \phi\phi_0\in \mathcal{B}\}$ with exponential rate at most $\zeta$. In particular, if $\mathcal{B}$ is a Banach algebra then $(T,\mu)$ has exponential decay of correlations in $\mathcal{B}$.
\end{proposition}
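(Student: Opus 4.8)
The plan is to use the spectral decomposition $\cL = \cP + \cN$ with $\cP$ the rank-one projection onto $\C\phi_0$, $\cN = \cL - \cP$, $\rho(\cN) = \zeta < 1$, and $\cP\cN = \cN\cP = 0$, so that $\cL^n = \cP + \cN^n$ for every $n\ge 1$. First I would identify the projection $\cP$ explicitly: since $\operatorname{im}(\cP) = \C\phi_0$ and $\cL$ preserves the integral $\int \cdot\, dm$ (because $\int \cL\psi\,dm = \int \psi\,dm$ for all $\psi\in L^1$), one gets $\cP\psi = \big(\int \psi\, dm\big)\phi_0$; indeed $\cP\psi$ must be a multiple of $\phi_0$, and comparing integrals (using $\cN\psi$ has zero integral, as $\int\cN^n\psi\,dm \to 0$ forces $\int\cN\psi\,dm=0$ on the range, or more directly $\int\cP\psi\,dm = \int\cL^n\psi\,dm - \int\cN^n\psi\,dm \to \int\psi\,dm$) pins down the constant. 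Hence for $\psi\in\mathcal{B}$,
\begin{equation}
\cL^n\psi = \Big(\int\psi\,dm\Big)\phi_0 + \cN^n\psi, \qquad \|\cN^n\psi\|_{\mathcal{B}} \le C\,\zeta_1^n\|\psi\|_{\mathcal{B}}
\end{equation}
for any $\zeta_1\in(\zeta,1)$ and a suitable constant $C$; relabel $\zeta_1$ as $\zeta$ if one only needs ``at most $\zeta$'', or keep track of the spectral radius formula to get the sharp rate.

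Next I would translate this into decay of correlations using the duality between $\cL$ and composition with $T$: for $\phi\in\tilde{\mathcal{B}}$ and $\psi\in L^\infty(\mu)$,
\begin{equation}
\int \phi\,(\psi\circ T^n)\,d\mu = \int \phi_0\,\phi\,(\psi\circ T^n)\,dm = \int \cL^n(\phi_0\phi)\,\psi\,dm,
\end{equation}
where the last equality is the defining property of the transfer operator, $\int (g\circ T^n)\,h\,dm = \int g\,(\cL^n h)\,dm$. Since $\phi\in\tilde{\mathcal{B}}$ means precisely $\phi\phi_0\in\mathcal{B}$, I can apply the spectral decomposition to $h = \phi_0\phi$:
\begin{equation}
\cL^n(\phi_0\phi) = \Big(\int \phi_0\phi\,dm\Big)\phi_0 + \cN^n(\phi_0\phi) = \Big(\int\phi\,d\mu\Big)\phi_0 + \cN^n(\phi_0\phi).
\end{equation}
Multiplying by $\psi$ and integrating against $m$, the first term contributes $\big(\int\phi\,d\mu\big)\int\phi_0\psi\,dm = \big(\int\phi\,d\mu\big)\big(\int\psi\,d\mu\big)$, which is exactly the product term to be subtracted. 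The remainder is $\int \cN^n(\phi_0\phi)\,\psi\,dm$, which I bound using the continuous embedding $\mathcal{B}\hookrightarrow L^2(m)$ and Cauchy--Schwarz:
\begin{equation}
\Big|\int \cN^n(\phi_0\phi)\,\psi\,dm\Big| \le \|\cN^n(\phi_0\phi)\|_{L^2(m)}\,\|\psi\|_{L^2(m)} \le C'\,\|\cN^n(\phi_0\phi)\|_{\mathcal{B}}\,\|\psi\|_{L^\infty(\mu)},
\end{equation}
where $\|\psi\|_{L^2(m)} \le \|\psi\|_{L^\infty}\,m(D)^{1/2} \le C''\|\psi\|_{L^\infty(\mu)}$ since $D$ has finite volume (or bound $\|\psi\|_{L^2(\mu)}\le\|\psi\|_{L^\infty(\mu)}$ directly after noting $\mu$ is a probability and $\phi_0\in L^2$ lets one pass between the two). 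Combining with the spectral bound on $\|\cN^n(\phi_0\phi)\|_{\mathcal{B}}$ gives the estimate $\le K(\phi,\psi)\,\zeta^n$ with $K(\phi,\psi) = C\,C'\,C''\,\|\phi\phi_0\|_{\mathcal{B}}\,\|\psi\|_{L^\infty(\mu)}$, establishing \eqref{decaimento}.

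Finally, for the last sentence: if $\mathcal{B}$ is a Banach algebra then $\phi\phi_0\in\mathcal{B}$ automatically whenever $\phi,\phi_0\in\mathcal{B}$, so $\tilde{\mathcal{B}} = \mathcal{B}$ and the decay holds on all of $\mathcal{B}$. The only real subtlety I anticipate is justifying that $\cP$ acts as claimed and that $\cN$ has zero integral on its range — i.e.\ correctly identifying the spectral projection — and making sure $\phi_0\phi$ genuinely lies in $\mathcal{B}$ (which is built into the definition of $\tilde{\mathcal{B}}$, so this is a matter of bookkeeping rather than a genuine obstacle); everything else is the standard spectral-gap-implies-decay argument together with the $L^2$-continuity of the embedding.
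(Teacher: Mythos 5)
Your proposal is correct and follows essentially the same route as the paper: decompose $\cL^n(\phi\phi_0)$ into the rank-one part $\big(\int\phi\,d\mu\big)\phi_0$ (identified via the integral-preserving property of $\cL$) plus a remainder contracted at rate $\zeta$, use the duality $\int \cL^n h\,\psi\,dm=\int h\,(\psi\circ T^n)\,dm$, and conclude by Cauchy--Schwarz together with the continuous embedding $\mathcal{B}\hookrightarrow L^2(m)$. Your explicit remark about the spectral radius only giving rate $\zeta_1^n$ for $\zeta_1>\zeta$ is a fair point that the paper itself glosses over, and it does not affect the ``rate at most $\zeta$'' conclusion as you note.
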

\begin{proof}
Since $\cL$ has spectral gap in $\mathcal{B}$, for each $\phi\in \mathcal{B}$ we  write $\phi=a(\phi)\phi_0+ \phi_1$ with   $\|\mathcal{L}^n\phi_1\|_{\mathcal{B}}\le \zeta^n\|\phi_1\|_\mathcal{B}$. Then the property $\int \cL u \, dm=\int u \, dm$ and $\cL  \phi_0 = \phi_0$ implies that $\int \phi_1 dm=0$  and $a(\phi)=\int \phi dm$.
We also have
$\cL^n(\phi\cdot\psi\circ T^n)=\psi\cdot\mathcal{L}^n\phi$
  and $\|\phi_1\|_{\mathcal{B}} \leq K \|\phi\|_{\mathcal{B}}$.

%Notice that $\mu=\phi_0 m$, where $\phi_0 \in \mathcal{B}$ satisfies $\cL(\phi_0)=\phi_0$ and $\int \phi_0 dm=1$.

%Recall that $\mu=\phi_0 m$,

%The correlation sequence is  $C_n (\phi,\psi) =  \int \phi(\psi \circ T^n) d\mu  - \int \phi d\mu \int \psi d\mu,$ where $n \geq 0$,

Given $\psi \in L^\infty(\mu)$  and $\phi\in\mathcal{\tilde B}$, it follows that:

%If $\cL$ has spectral gap in $\mathcal{B}$, we can write $\phi=a(\phi)\phi_0+ \phi_1$, such that $\|\mathcal{L}^n\phi_1\|_{\mathcal{B}}\le \zeta^n\|\phi_1\|_\mathcal{B}$. Then, using the property $\int \cL \phi_1 \, dm=\int \phi_1 \, dm$, we conclude $\phi_1$ has zero integral relative to the Lebesgue measure 
%and $a(\phi)=\int \phi \, dm$.

\begin{align*}
\Big| \int \phi(\psi\circ T^n)\,d\mu - \int \phi d\mu \int \psi d\mu \Big| &=\Big| \int \phi(\psi\circ T^n)\phi_0\,dm - \int \phi\phi_0 dm \int \psi\phi_0 dm \Big| \\
&=  \Big| \int \big[\cL^n (\phi\phi_0) - \Big(\int \phi \phi_0\,dm\Big) \phi_0\big] \psi dm \Big|\\
&=  \Big| \int \cL^n \Big[ (\phi\phi_0)_1 \Big] \psi dm \Big|\\
&\le \|\psi\|_{L^2(m)} \| \cL^n (\phi\phi_0)_1 \|_{L^2(m)}  \\
&\le  K \|\psi\|_{L^2(m)} \| \cL^n (\phi\phi_0) \|_{\mathcal{B} } \\
&\le  K  \|\psi\|_{L^2(m)} \| (\phi\phi_0)_1 \|_{\mathcal{B} }  \zeta^n = K(\phi,\psi) \zeta^n.
\end{align*}

%
%By the invariance of $\mu$, the constant functions corresponds to the eigenvectors associated to the eigenvalue $1$, so $\phi_0=\int\phi_0 d\mu$ $\mu$-everywhere. Then $\cL^n \phi - \int \phi d\mu =N^n \phi$ and  $\|N^n \phi\|_{\mathcal{B}}/\zeta^n \rightarrow 0$, which gives:
%\begin{align*}
%\|\cL^n \phi - \int \phi d\mu\|_{L^1(\mu)} \leq C \|N^n \phi\|_{\mathcal{B}} \leq C_2 \|\phi\| \zeta^n.
%\end{align*}

So 
%$
%|C_n(\phi,\psi)| \leq  K \|\psi \|_{L^2(m)} \|(\phi\phi_0)_1\|_{\mathcal{B}} \zeta^n
%$
%and therefore 
$(T,\mu)$ has exponential decay of correlations in  $\mathcal{\tilde B}$.
\end{proof}

%\red{(definir para que funções $\phi$ a desigualdade acontece para toda $\psi\in L^2(\mu)$) If $\mathcal{B}\subset L^2(\mu)$, it is easy to check that if $\phi\in \mathcal{B}$ and $\psi\in L^2(\mu)$ then 
%$$
%|C_n(\phi,\psi)|\le K \|\psi\|_{L^2}\|\phi\|_{\mathcal{B}}\zeta^{n}
%$$
%}
%and therefore, applying Theorem E.9, page 344 in \cite{bdviana} , this implies the \textbf{central limit theorem}, that is, if $\phi\in \mathcal{B}$ and $\int \phi \,d\mu=0$ then 
%$\sigma^2=\int \phi^2\, d\mu+ 2\sum_{n=1}^{\infty}\int \phi(\phi\circ T^n)\,d\mu$ is finite and nonnegative. If $\sigma>0$ then
%$$
%\lim_{n\to \infty}\mu\Big(\frac{1}{\sqrt{n}}\sum_{j=0}^{n-1}\phi\circ T^j\le z\Big)=\frac{1}{\sqrt{2\pi\sigma}}\int_{-\infty}^{z}e^{-\frac{t^2}{2\sigma^2}}\,dt
%$$
%for every $z\in\R$. Also, $\sigma=0$ if and only if $\phi=u\circ T^n-u$ for some $u\in L^2(\mu)$.

These facts will be used to prove Theorem \ref{teo.2}.

\begin{proof}[Proof of Theorem \ref{teo.2}]
Consider the smallest integer $\rho_0$ and the greatest integer $\rho_1$ such that
$$
\rho_1+ u/2  < s < \rho_0 - u/2-d/2. %\quad \text{ and } \quad \rho_0 \leq r-1.
$$	
Consider $t \in ( \rho_1+u/2,s)$ and an integer $q$ such that $B_1 \tau(q) < (|\det DT|{\fm(C)}^{2s})^q$.
Since $\rho_0 - \rho_1 \leq u + \frac{d}{2} +2 $ and $\sum_{j=1}^{n}1/j\le 1+\log(n-1)$, we have $\nu\le  1+\log(u+\frac{d}{2}+1) := a$.
	
%	Define $S_{u,d}:= \sum_{j=1}^{u+\lfloor \frac{d}{2} \rfloor + 2}1/j$ and notice that $\rho_0 - \rho_1 \leq u + \frac{d}{2} +2 $. Then $\nu=\nu(\rho_0,\rho_1)\leq 1/(\rho_1+1)+ \cdots + 1/\rho_0 \leq S_{u,d}$.
%	
%	Then $\|E^{-1}\|^{-1/\nu} < \frac{1}{\|E^{-1}\|^{1/S_{u,d}}} < (\frac{K \tau(q)^{1/q}}{ \|E^{-1}\| |\det C|\fm(C)^{2s} } )^{1/u S_{u,d}} < (\frac{K \tau(q)^{1/q}}{ |\det E|  |\det C| \fm(C)^{2s} } )^{1/u S_{u,d}} $.

 So, if $\zeta \in \Big( \max\{\|E^{-1}\|^{\frac{1}{a}},(\frac{(B_1 \tau(q))^{1/q}}{|\det DT| \fm(C)^{2s}})\} , 1 \Big)$ then $\zeta$ is in the interval in \eqref{intervalo}. %\marginpar{checar direito \\ essas desigualdades}

	We will verify that the conditions of Theorem \ref{hennion} are satisfied considering $\mathcal{B}$ the completion of $C^r(D)$ with respect to the norm $\|\cdot\| = \|\cdot\|_{H^s} + \|\cdot\|^\dagger_{\rho_0}$ and $\mathcal{B}'$ the completion of $C^r(D)$ with respect to the norm $\|\cdot\|^\dagger_{\rho_1}$.
	
	Obviously $\|\cdot\|_{\rho_1} \leq \|\cdot\|_{\rho_0} \leq \|\cdot\|$, which implies condition (1) in the theorem of Hennion. Condition (3) is and immediate consequence of Lemma \ref{l.norma cruz} and condition (4) follows from Lemma \ref{togheter} with $r=\zeta$. Condition (5) is immediate since $T$ is mixing in $\Lambda$.
	
	It remains to verify the compactness (condition (2)). 	
	The embedding of $H^s(D)$ in $H^t(D)$ is compact, by Sobolev's embedding theorem ($s>t$). So, it is sufficient to prove that the embedding of $H^t(D)$ in $\mathcal{B}'$ is {continuous}, which will be proved in Lemma \ref{continuous}.

%due to \cite{adams, Theorem 7.58(iii)} (applied with $p=q=2$, $k=d$, $n=u+d$ and $t>\rho_1 + \frac{u+d-1}{2}$), which gives that
%	$$\|\partial^{\rho_1}\phi\|_{L^2(?)} \leq C(\sigma) \|\phi\|_{H^t(D)}$$
%	for any $\sigma \subset \Omega$ and $\phi \in H^t(D)$. Notice that, by Remark \ref{compostion}, the constant $C(\sigma)$ is uniform in $\sigma \in \Omega$.
%
%
%

	Finally, we notice that $C^{r-1}(D) \subset \mathcal{B}$. The definition of $\|\cdot\|^\dagger_\rho$ gives that
	$$\Big| \int_{U_\psi} \phi(y) \partial^\alpha_x \partial_y^\beta h(\psi(y),y) dy\Big| \leq \int_{U_\psi} \Big|\partial_x^\alpha \partial_y^\beta h (\psi(y),y)\Big|dy \leq
	 \operatorname{vol}(D) \|h\|_{C^{r-1}}$$
	whenever $|\alpha|+|\beta| \leq \rho_0$, $\psi \in \mathcal{S}$, $\phi \in C^{|\alpha|+|\beta|}(U_\psi)$ and $\|\phi\|_{C^{|\alpha|+|\beta|}}\leq 1$. This implies immediately that $\|h\|_{\rho_0}^\dagger \leq K \|h\|_{C^{r-1}}$ and that 	
	 $C^{r-1}(D) \subset \mathcal{B}$.   %, because for every $h \in C^{r-1}(D)$ we can consider $h_n \in C^r(D)$ with $\|h_n-h\|_{C^{r-1}} \to 0$. So $\|h_n - h \|^\dagger_{\rho_0}\to 0$ and $\|h_n-h\|_{H^s} \to 0$. Then $h \in \mathcal{B}$.
%	
%	This finishes the proof of Theorem \ref{teo.2}.
\end{proof}

\begin{lemma}\label{continuous}
Consider $0 \leq \rho_1<\rho_0 \leq r-1$ such that
$$\rho_1+ u/2 < t  < s < \rho_0 - u/2-d/2.$$
	
Then the embedding of $H^t(D)$ in $\mathcal{B}'$ is continuous, that is, there exists a constant $K>0$ such that
$$\|u\|_{\rho_1}^{\dagger} \le K \|u\|_{H^t}.$$
\end{lemma}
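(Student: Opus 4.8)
The plan is to bound the single cross-norm $\|u\|^{\dagger}_{\rho_1}$ directly by $\|u\|_{H^t}$, estimating one term at a time. Fix a multi-index $(\alpha,\beta)$ with $|\alpha|+|\beta|=j\le\rho_1$, an element $\psi\in\mathcal{S}$, and a test function $\phi\in C^{j}(U_\psi)$ with $\|\phi\|_{C^{j}}\le 1$. We must estimate
\[
\int_{U_\psi}\phi(y)\,\partial_x^{\alpha}\partial_y^{\beta}u(\psi(y),y)\,dy.
\]
The idea is that the integrand is a ``trace along the graph $G_\psi$'' of a derivative of $u$ of order $j\le\rho_1$, so what is really needed is a trace theorem: restriction of $H^t$-functions to the $u$-dimensional graph $G_\psi$ is bounded into $L^2$ (even into $H^{t-d/2}$) of the graph, provided $t>d/2$; and since we may take derivatives of $u$ up to order $\rho_1<t-d/2$ by hypothesis ($\rho_1+u/2<t$ is more than enough to give $\rho_1<t-d/2$ once $u\ge d$; in any case $\rho_1 + d/2 < t$ must be arranged, which follows from $\rho_1+u/2<t$ when $u\ge d$), the derivative $\partial_x^{\alpha}\partial_y^{\beta}u$ lies in $H^{t-\rho_1}$ with $t-\rho_1>d/2$, and its restriction to $G_\psi$ lies in $L^2(U_\psi)$ with norm $\lesssim\|u\|_{H^t}$.

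More precisely, the steps I would carry out are: (i) Reduce to flat graphs. The graph $G_\psi=\{(\psi(y),y):y\in U_\psi\}$ is the image of $U_\psi\subset\R^d$ under the $C^r$ embedding $y\mapsto(\psi(y),y)$, whose derivatives are bounded by the fixed constants $k_1,\dots,k_r$ in the definition of $\mathcal{S}$; by Remark \ref{composition} (the change-of-variables estimate \eqref{eq.normacomp}), composing with a bounded $C^r$ diffeomorphism of a neighborhood straightening $G_\psi$ to a coordinate $\R^d\subset\R^{u+d}$ changes the relevant Sobolev norms by at most a uniform constant. So it suffices to prove the trace bound $\|v(\cdot,0)\|_{L^2(\R^d)}\le K\|v\|_{H^{t-\rho_1}(\R^{u+d})}$ for the standard restriction to the coordinate subspace $\{x=0\}$, valid because $t-\rho_1>d/2$ — wait, the codimension is $u$, so the correct trace condition is that the Sobolev order exceed $u/2$, namely $t-\rho_1>u/2$, which is exactly the hypothesis $\rho_1+u/2<t$. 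This is the standard trace/restriction theorem (see e.g. \cite[Thm.\ 2.3]{...} or directly via Fourier transform). (ii) Apply this with $v=\partial_x^{\alpha}\partial_y^{\beta}u$: since $|\alpha|+|\beta|\le\rho_1$ we have $\|v\|_{H^{t-\rho_1}}\le\|u\|_{H^{t}}$, hence the restriction of $v$ to $G_\psi$ has $L^2(U_\psi)$-norm at most $K\|u\|_{H^t}$ with $K$ depending only on $k_1,\dots,k_r$ and $t$. (iii) Finally, Cauchy–Schwarz with $\|\phi\|_{C^0}\le1$ and $\mathrm{vol}(U_\psi)\le\mathrm{vol}(D)<\infty$ gives
\[
\Big|\int_{U_\psi}\phi(y)\,\partial_x^{\alpha}\partial_y^{\beta}u(\psi(y),y)\,dy\Big|
\le \mathrm{vol}(D)^{1/2}\,\|v\big|_{G_\psi}\|_{L^2(U_\psi)}
\le K\,\|u\|_{H^t}.
\]
Taking the maximum over $(\alpha,\beta)$ with $|\alpha|+|\beta|\le\rho_1$ and the supremum over $\psi\in\mathcal{S}$ and admissible $\phi$ gives $\|u\|^{\dagger}_{\rho_1}\le K\|u\|_{H^t}$, and density of $C^r(D)$ extends this to all of $H^t(D)$.

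The main obstacle is step (i): one needs the trace/restriction bound with a constant that is \emph{uniform over all $\psi\in\mathcal{S}$}. This is where the uniform bounds $\|D^\nu\psi\|\le k_\nu$ built into the definition of $\mathcal{S}$ are essential — they guarantee that the straightening diffeomorphisms have $C^r$-norms controlled independently of $\psi$, so that the constant $K(F)$ in \eqref{eq.normacomp} is uniform. A clean way to package this is to cover $U_\psi$ by boundedly many charts on which $G_\psi$ is a uniformly $C^r$-bounded graph over a fixed $d$-plane, extend $v$ suitably, and invoke the classical restriction estimate on each chart; the bookkeeping is routine once the uniformity is isolated. The role of the upper bound $s<\rho_0-u/2-d/2$ does not enter here — only $\rho_1+u/2<t$ is used — so this lemma is exactly the ``continuity'' half needed to complement the compact embedding $H^s\hookrightarrow H^t$ in the proof of Theorem \ref{teo.2}.
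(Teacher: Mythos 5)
Your proof is correct and follows essentially the same route as the paper: straighten the graph $G_\psi$ by the shear $(x,y)\mapsto(x+\psi(y),y)$, whose derivatives are uniformly controlled by the constants $k_1,\dots,k_r$ from the definition of $\mathcal{S}$, then invoke the standard codimension-$u$ trace theorem (using exactly $t>\rho_1+u/2$) and finish with Cauchy--Schwarz over the bounded domain. The only cosmetic difference is that you trace each derivative $\partial_x^{\alpha}\partial_y^{\beta}u\in H^{t-\rho_1}$ into $L^2(\R^d)$, whereas the paper converts graph-derivatives of $u$ into derivatives of the sheared function $v$ and traces $v$ into $H^{\rho_1}(\R^d)$ via \cite[Theorem 7.58(iii)]{adams}; the two bookkeeping schemes are equivalent.
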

\begin{proof}[Proof of Lemma \ref{continuous}]
Consider  $\psi=(\psi_1,\dots,\psi_{u})\in \mathcal{S}$ a $C^r$ transformation as in the Subsection \ref{ss. set S}. Recall that $\|D^\nu \psi\|\le k_\nu$ for $k_1, k_2, \dots, k_r$ previously fixed. Considering an extension of $\psi$, we may suppose that the domain $U_\psi$ of $\psi$ contains $\pi_2(D)=[-K_0,K_0]^{d}$. In these conditions, we establish

\begin{claim}\label{l. embedding}
Let $u:\T^u \times \R^d \to \R$ be a $C^r$ function with compact support in $D$. Define $v(x,y)=u(x+\psi(y),y)$ for $y\in U_\psi$ and $v(x,y)=0$ if $y\not \in U_{\psi}$. Then,
for every multi-index $\gamma$ with $|\gamma| \le r$ and for every $y\in U_\psi$, we have
\begin{align}
\label{eq. embedding 10} (\partial^{\gamma}u)(x+\psi(y),y)&=\sum_{|\tilde\gamma|\le |\gamma|}a_{\tilde\gamma,\gamma}(y)(\partial^{\tilde\gamma}v)(x,y)\,\,  \quad  \quad \quad  \text{and} \\
\label{eq. embedding 11}  (\partial^{\gamma}v)(x,y)&=\sum_{|\tilde\gamma|\le |\gamma|}b_{\tilde\gamma,\gamma}(y)(\partial^{\tilde\gamma}u)(x+\psi(y),y),
\end{align}
where $a_{\tilde\gamma,\gamma}$ and $b_{\tilde\gamma,\gamma}$ are polynomials of degree at most $|\gamma|$ in the variables $\partial^{\beta}\psi_k$, with $1\le|\beta|\le |\gamma|$. 
Consequently $\partial^{\alpha}a_{\tilde\gamma,\gamma}$ and $\partial^{\alpha}b_{\tilde\gamma,\gamma}$ are bounded by some constant $K$ which depends on only $k_1, k_2, \dots, k_r$, for all multi-index $\alpha=(\alpha_1, \dots, \alpha_d)$, with $0\le|\alpha|\le r-|\gamma|$.
\end{claim}
\begin{proof}
  Follows by induction on $\rho=|\gamma|$.
\end{proof}

\begin{claim}\label{l. embedding 1}
Let $u:\T^u \times \R^d \to \R$ be a $C^r$ function with compact support in $D$. Define $v(x,y)=u(x+\psi(y),y)$ for $y\in U_\psi$ and $v(x,y)=0$ if $y\not \in U_{\psi}$. Then, for any $0< t < r$, we have
$$
\|v\|_{H^t(\R^{u+d})}\le K \|u\|_{H^t(\R^{u+d})},
$$
where $K$ depends on only $k_1, k_2, \dots, k_r$.
\end{claim}
\begin{proof}
Follows using inequality \eqref{eq. embedding 11}.
\end{proof}

From the definition of $\|.\|_{\rho_1}^{\dagger}$ and Cauchy-Schwarz inequality, we have that 
\begin{equation}\label{eq. embedding 22}
\|u\|_{\rho_1}^\dagger \leq \operatorname{vol}(D) \max_{|\gamma|\le \rho_1}\sup_{\psi\in \cS}\|\partial^{\gamma}u(\psi(.),.)\|_{L^2(\R^d)}.
\end{equation}

By Claim~\ref{l. embedding}, the right-hand side of \eqref{eq. embedding 22} is bounded by
\begin{equation}\label{eq. embedding 23}
\operatorname{vol}(D) K\sum_{|\gamma|\le \rho_1}\|\partial^{\gamma}v(0,.)\|_{L^2(\R^d)}=K\|v(0,.)\|_{H^{\rho_1}}.
\end{equation}

Due to \cite[Theorem 7.58(iii)]{adams} applied with $p=q=2$, $\tilde{s}=\rho_1+u/2$, $\chi=\rho_1$, $k=d$, $n=u+d$, we have:
$$\|v(0,.)\|_{H^{\rho_1}} \leq K \|v(.,.)\|_{H^{\tilde{s}}(\R^{u+d})}.$$

By $t>\rho_1 + \frac{u}{2} = \tilde{s}$ and Claim~\ref{l. embedding 1}, we conclude that
%$$\|\partial^{\rho_1}\phi\|_{L^2(?)} \leq C(\sigma) \|\phi\|_{H^t(D)}$$
\begin{equation}\label{eq. embedding 24}
 \|v(.,.)\|_{H^{\tilde{s}}(\R^{u+d})} \le  K \|v(.,.)\|_{H^t(\R^{u+d})}\le K \|u\|_{H^t(\R^{u+d})}.
\end{equation}

Therefore we have that $\|u\|_{\rho_1}^{\dagger} \le K \|u\|_{H^t}.$

\end{proof}

\section{Genericity}

In \cite[Theorem 2.12]{bocker.bortolotti}  the authors proved that if $C\in \mathcal{C}(d)$ satisfies $\| \sC \| < \frac{\|\sE^{-1}\|^{-1}}{|\det \sE|^{\frac{1}{{u-d+1}}}}$, then there 
 exists a family $f_\bt$, $\bt \in \R^m$, with $f_{0}=f$ such that the set $\big\{\bt \in \R^m , \underset{q\to\infty}{\limsup} \frac{1}{q}\log$ $\tau_{f_\bt}(q)  > \log J \big\}$ has zero Lebesgue measure (where $J=|\det E | |\det C|^{-1} \|C^{-1}\|^{-2d}>1$).

Actually, the same same proof is valid considering any $\beta>0$ instead of $\log J$, that is, if we define
\begin{equation}
\displaystyle \mathcal{C}(d;\sE)=\left\{ \sC \in \mathcal{C}(d) , \|\sC\| < \frac{\|\sE^{-1}\|^{-1}}{|\det \sE|^{\frac{1}{{u-d+1}}} } \right\},
\end{equation}
then 
the set 
$ \mathcal{T}_{\beta} :=
\{\bt \in \R^m , \underset{q\to\infty}{\limsup} \frac{1}{q}\log \tau_{f_\bt}(q)  > \beta  \}
$
has zero Lebesgue measure.

\begin{proposition}[\cite{bocker.bortolotti}, Theorem 2.12]\label{bb}
	 Given $\beta>0$, integers $u\ge d \ge 1$, $E\in \mathcal{E}(u)$ and $\sC\in \mathcal{C}(d,E)$, there exist $C^\infty$-functions $\phi_k:\T^u \to \R^d$, $k=1,2,\dots, m$ such that for $f_0\in C^2(\T^u,\R^d)$ and its corresponding family $f_\bt = f_0 + \sum_{k=1}^s t_k \phi_k$, the set of parameters $t=(t_1,t_2,\dots,t_m)$ such that $\bt \notin \mathcal{T}_\beta$ has full Lebesgue measure.
\end{proposition}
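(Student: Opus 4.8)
The statement coincides, up to replacing the specific constant $\log J$ by an arbitrary $\beta>0$, with \cite[Theorem 2.12]{bocker.bortolotti}; the plan is to re-run that argument and to note that $\log J$ intervenes only at the very end, as the exponent in a Borel--Cantelli threshold, so every $\beta>0$ is allowed. I describe the structure of the argument. One first fixes a Markov partition of $\torus$ of small diameter and chooses $C^\infty$ maps $\phi_1,\dots,\phi_m:\torus\to\R^d$ so that at every point $z$ in a fixed neighbourhood of each atom $\overline{\mathcal{R}(a)}$ the differentials $D\phi_1(z),\dots,D\phi_m(z)$ span $\mathrm{Hom}(\R^u,\R^d)$ (a suitable finite family of trigonometric polynomials, or of bump functions localized near the atoms, will do). With $f_\bt=f_0+\sum_k t_k\phi_k$, the functions $S_{\bc}(x,\ba)=\sum_i\sC^{\,i-1}f_\bt\bigl(E^{-i}_{\bc,\ba}(x)\bigr)$ depend affinely on $\bt=(t_1,\dots,t_m)$.

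The core is a quantitative \emph{transversality lemma}. Fix a triple $(\bc,\ba,\bb)$ with $\ba,\bb\in I^{q}(\bc)$ and let $j=j(\ba,\bb)$ be the length of the longest common prefix of $\ba$ and $\bb$. Then $\bt\mapsto DS_\bc(x,\ba)-DS_\bc(y,\bb)$ is affine, its linear part is governed by the index-$(j{+}1)$ term (where the inverse branches of $E^{\,j+1}$ genuinely differ), and the choice of the $\phi_k$'s makes this linear part surjective onto $\mathrm{Hom}(\R^u,\R^d)$, with the size of a right inverse controlled by a negative power of the contraction of $\sC^{\,j}$. Taking the length $p$ of $\bc$ large enough that the oscillation of $DS_\bc(\cdot,\ba)-DS_\bc(\cdot,\bb)$ over $\mathcal{R}_*(\bc)$ is small compared with the threshold $3\theta^{q}\alpha_0$, failure of transversality forces the value of the above affine map at a reference point to come within $O(\theta^{q})$ of the set of rank-deficient matrices — which has codimension $u-d+1$ in $\mathrm{Hom}(\R^u,\R^d)$ — so that, on any fixed parameter ball, $\mathrm{Leb}\bigl(\{\bt:\ \ba\text{ not transversal to }\bb\text{ on }\bc\}\bigr)$ is bounded by a power of $\theta^{q}/\overline{\lambda}^{\,j}$ with exponent $u-d+1$.

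Next one sums over the words and invokes Borel--Cantelli. For fixed $\bc$ and $\ba$ there are $\lesssim N^{\,q-j}$ words $\bb\in I^{q}(\bc)$ sharing a prefix of length $j$ with $\ba$ ($N=|\det\sE|$), so the expected cardinality of $\{\bb:\ \ba\not\pitchfork_\bc\bb\}$ is bounded by a sum of the type $\sum_{j\ge0}N^{\,q-j}\,\theta^{\,q(u-d+1)}\overline{\lambda}^{-j(u-d+1)}$; the hypothesis $\sC\in\mathcal{C}(d;\sE)$, which is exactly $N\,(\overline{\lambda}\,\underline{\mu}^{-1})^{u-d+1}=N\,\theta^{\,u-d+1}<1$, is precisely what makes this geometric series in $j$ exponentially small in $q$. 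Keeping track of the number of relevant pairs $(\bc,\ba)$ and of the dependence $p=p(q)$, integrating $\tau_{f_\bt}(q)=\min_p\max_\bc\max_\ba\#\{\bb:\cdots\}$ over a ball $B_R\subset\R^m$, one arrives at an estimate $\int_{B_R}\tau_{f_\bt}(q)\,d\bt\le K(R)\,A_q$ with $\limsup_q A_q^{1/q}\le1$; Markov's inequality then gives $\mathrm{Leb}\bigl(\{\bt\in B_R:\ \tau_{f_\bt}(q)\ge e^{\beta q}\}\bigr)\le K(R)\,A_q\,e^{-\beta q}$, which is summable in $q$. By Borel--Cantelli, for Lebesgue-a.e.\ $\bt\in B_R$ one has $\tau_{f_\bt}(q)<e^{\beta q}$ eventually, hence $\limsup_q\frac1q\log\tau_{f_\bt}(q)\le\beta$, i.e.\ $\bt\notin\mathcal{T}_\beta$; letting $R\to\infty$ concludes.

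I expect the main obstacle to be exactly the passage from the per-triple transversality lemma to the global count: one has to exhibit a choice of the refinement $p=p(q)$ and a grouping of the words (by common-prefix length, and by the scale at which two graphs separate) under which the exponential proliferation $N^{q}$ of admissible words is precisely defeated by the smallness $\theta^{q(u-d+1)}$ coming from the codimension of the rank-deficient locus, the hypothesis $N\theta^{u-d+1}<1$ being the sharp requirement that makes the two balance. This is the intricate part of \cite{bocker.bortolotti}; since the constant $\log J$ there only appears as the exponent $\beta$ in the final Borel--Cantelli threshold, that proof applies unchanged for every $\beta>0$.
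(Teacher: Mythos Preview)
Your proposal is correct and matches the paper's approach: the paper itself does not give a proof but simply remarks (in the paragraph preceding the proposition) that the argument of \cite[Theorem~2.12]{bocker.bortolotti} carries over verbatim with any $\beta>0$ in place of $\log J$, which is precisely your observation that $\log J$ enters only as the Borel--Cantelli threshold at the end. Your detailed sketch of the transversality/counting/Borel--Cantelli mechanism goes beyond what the paper provides, but is consistent with the cited reference and with the role of the hypothesis $\sC\in\mathcal{C}(d;\sE)$ (equivalently $N\theta^{u-d+1}<1$).
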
	

As a consequence, for every $n\geq 1$ there exists a residual set $\mathcal{R}_n \in C^r(\torus, \R^d)$ such that $\underset{q\to +\infty}{\limsup} \frac{1}{q} \log \tau_f(q) < \frac{1}{n}$. Then $\mathcal{R}=\underset{n \geq 1}{\cap}\mathcal{R}_n$ is also a residual subset of $C^r(\torus, \R^d)$ such that $\underset{q\to +\infty}{\limsup} \frac{1}{q} \log \tau_f(q)=0$ for every $f \in \mathcal{R}$.

\subsection{Proof of the Main  Theorems}

Putting Proposition \ref{bb} together with Theorems \ref{teo.1} and \ref{teo.2}, it follows Theorems A and D, and the immediate Corollaries B and C.

\begin{proof}[Proof of Theorem A]
Consider $\beta = \log |\det \sC| \det \sE| \|C^{-1}\|^{-2s}   >0$, $\mathcal{R}$ as given by the consequence of Proposition \ref{bb} and $\mathcal{V} \subset C^r(\torus, \R^d)$ the set of $f$'s such that  the corresponding SRB measure $\mu_T$ of $T(C,E,f)$ is absolutely continuous with repect to the Lebesgue measure and
 $\|d\mu_T/d\operatorname{vol}_{\torus\times \R^d}\|_{H^s}<+\infty$.

As Theorem \ref{teo.1} is valid for every $f \in \mathcal{R}$, we have a corresponding open set $\mathcal{U}_f$ such that the conclusion of Theorem \ref{teo.1} is valid for every $g \in  \mathcal{U}_f$.
Taking $\mathcal{U}=\underset{f \in \mathcal{R}}{\cup} \mathcal{U}_f$, it follows that
 $\mathcal{R}\subset{\mathcal U}$ and that $\mathcal{U}$ is dense. So
 $\mathcal{U}$ is open and dense and Theorem A is valid for every $f \in \mathcal{U}$.
	
\end{proof}

\begin{proof}[Proof of Corollary \ref{cor.b}]
	Notice that the inequality $\displaystyle |\det \sE||\det \sC| \|C^{-1}\|^{-2s} > 1$ 
	is open in $s \in \R$. So if it is valid for $s=0$, then is also valid for some $s>0$.
\end{proof}

\begin{proof}[Proof of Corollary \ref{cor.c}]
	This corollary is immediate from Theorem A and Sobolev's embedding Theorem (the elements of $H^s(\torus \times \R^d)$ are continuous up to a null Lebesgue set when $s> \frac{u+d}{2}$).
\end{proof}

\begin{proof}[Proof of Theorem D]
Consider the same residual set $\mathcal{R}\subset C^r(\torus, \R^d)$ as in the proof of Theorem A and $\mathcal{U}$ the set of $f$'s such that the conclusion of Theorem \ref{teo.2} is valid.

As Theorem \ref{teo.2} is valid for every $f \in \mathcal{R}$, it follows that $\mathcal{U}$ is open and dense.
\end{proof}


\begin{thebibliography}{A}


		
\bibitem{adams} R. Adams - Sobolev spaces, Pure and Applied Mathematics, vol. 65, Academic Press (1975).

\bibitem{adler} R. Adler - F-expansions revisited, Recent Advances in Topological Dynamics, 1-5, New York, Springer-Verlag (1975). (Lecture Notes in Mathematics 318)

%\bibitem{fat.baker.map} J. Alexander, J. Yorke -  \textit{Fat Baker's transformations}, Ergod. Th. Dynam. Sys., 4 (1984), 1-23.

\bibitem{alves} J. Alves - SRB measures for non-hyperbolic systems with multidimensional expansion, Ann. Sci. Ecole Norm. Sup. 33, 1-32 (2000).

\bibitem{alves.pinheiro} J. Alves, V. Pinheiro - Topological structure of (partially) hyperbolic sets with positive volume, Trans. Amer. Math. Soc. 360, 5551-5569 (2008).	

\bibitem{avila.gouezel.tsujii} A. Avila, S. Gouezel, M. Tsujii - Smoothness of Solenoidal Attractors, Discr. and Cont. Dyn. Sys. 15, 21-35 (2006).

\bibitem{bocker.bortolotti} C. Bocker, R. Bortolotti - Higher-dimensional Attractors with absolutely continuous invariant probability, Nonlinearity 31, 2057-2082 (2018).


%\bibitem{bdviana} C. Bonatti, L. Diaz, M. Viana - \textit{Dynamics Beyond Uniform Hyperbolicity: A Global Geometric and Probabilistic Perspective}, Springer-Verlag, Berlin Heidelberg, (2005).

%\bibitem{bortolotti} R. T. Bortolotti - \textit{Physical measures for certain partially hyperbolic attractors on 3-manifolds}, Ergod. Th. Dynam. Sys., to appear (2017).

% \bibitem{bowen} R. Bowen - \textit{Equilibrium states and the ergodic theory of Anosov diffeomorphisms}, Lect. Notes in Math., Springer-Verlag, 470 (1975).

\bibitem{bowen.ruelle}  R. Bowen, D. Ruelle - The ergodic theory of Axiom A flows, Invent. Math. 29, 181-202 (1975).

\bibitem{doeblin.fortet} W. Doeblin, R. Fortet. -  Sur des cha\^ines \`a liasions compl\`etes, Bull. Soc. Math. France 65, 132-148 (1937).

\bibitem{fraenkel} F. Fraenkel. -  Formulae for high derivatives of composite functions, Math. Proc. Camb. Phil. Soc. 83, 159-165 (1978).

\bibitem{gouezel.liverani} S. Gouezel, C. Liverani -  Banach spaces adapted to Anosov systems,
Ergod. Th. Dynam. Sys. 26, 189-217 (2006).


%\bibitem{gouezel.liverani.2} S. Gouezel, C. Liverani - \textit{Compact locally maximal hyperbolic sets for smooth maps: fine statistical properties}, J. Diff. Geom., 79 (2008), 433-477.


%\bibitem{durrett} R. Durrett - \textit{Probability: Theory and Examples}, $4^{th}$ edition, Cambridge University Press, (2010).

%\bibitem{faure} F. Faure - \textit{Semiclassical origin of the spectral gap for transfer operators of a partially expanding map}, Nonlinearity, 24 (2011), 1473-1498.


\bibitem{hennion} H. Hennion - Sur un th\'eor\`eme spectral et son application aux noyaux lipschitiens, Proc. Amer. Math. Soc. 118, 627-634 (1993).

\bibitem{hormander} L. Hormander - The analysis of linear partial differential operators I. 2nd edition, Springer-Verlag, Berlin (1990).


\bibitem{jiang} D.-Q. Jiang, M. Qian - Ergodic Hyperbolic Attractors of Endomorphisms, J. Dyn. Control. Syst. 12, 465-488 (2006).


\bibitem{lasota.yorke} A. Lasota, J. Yorke - On the existence of invariant measures for piecewise monotonic transformations, Trans. Amer. Math. Soc. 186, 481–488 (1973).

%\bibitem{keller} G. Keller -  \textit{Exponents, Attractors, and Hopf decompositions for interval maps}, Ergod. Th. Dynam. Sys., 10 (1990), 717-744.

%\bibitem{mane} R. Ma\~n\'e - Ergodic theory and differentiable dynamics, Springer-Verlag, 1987.

%\bibitem{ntw}  Y. Nakano, M. Tsujii, J. Wittsten - \textit{The partial captivity condition for $U(1)$ extensions of expanding maps on the circle}, Nonlinearity, 29 (2016), 1917-1925.

%\bibitem{viana} K. Oliveira, M. Viana - \textit{Foundations of Ergodic Theory}, Cambridge University Press, (2016).

% \bibitem{determinant} A. M. Ostrowski - \textit{Sur la d\'etermination des bornes inf\'erieures pour une classe des d\'eterminants}, Bull. Sci. Math., 61(2) (1937), 19–32.

% \bibitem{palis} J. Palis -  \textit{A global view of dynamics and a conjecture on the denseness of finitude of attractors}, Ast\'erisque, 261 (2000), 335-347.

\bibitem{przytycki} F. Przytycki - On $\Omega$-stability and structural stability of endomorphisms satisfying Axiom A, Studia Math. 60, 61-77 (1977). 

\bibitem{markov.partition} F. Przytycki, M. Urbanski - Conformal Fractals: Ergodic Theory Methods, Cambridge University Press (2010).

\bibitem{qian.xie} M. Qian, J.-S. Xie - Smooth ergodic theory for endomorphisms, Springer Verlag (2009).

\bibitem{qian.zhang} M. Qian, Z. Zhang - Ergodic theory for Axiom A endomorphisms, Ergod. Th. Dynam. Sys. 15, 161-174 (1995).



%\bibitem{rokhlin} V. A. Rokhlin - \textit{On the fundamental ideas of measure theory}, Transl. Amer. Math. Soc., Series 1, 10 (1962), 1-52.

%\bibitem{sarig} O. Sarig -
% Introduction to the transfer operator method, 2nd Brazilian Dynamical System School (available at \textit{http://www.weizmann.ac.il/math/sarigo/research/lecture-notes}).
% \bibitem{solano} J. Solano - \textit{Non-uniform hyperbolicity and existence of absolutely continuous invariant measures}, Bull. Braz. Math. Soc., 44(1) (2013), 67-103.

\bibitem{tulcea.marinescu} C. Ionescu-Tulcea, G. Marinescu - Th\'eorie ergodique pour des classe d'op\'erarions non compl\`etement continues, Ann. of Maths. 52, 140-147 (1950).

\bibitem{expanding.dif1}  K. Krzyzewski - Some results on expanding mappings, Asterisque 50, 205–18 (1977).

\bibitem{expanding.dif2}  R. Sacksteder - The Measures Invariant Under an Expanding Map, Springer Lecture Notes in Maths, vol 392, Berlin Springer, (1974).


\bibitem{tartar} L. Tartar - An Introduction to Sobolev Spaces and Interpolation Spaces, Springer-Verlag Berlin Heidelberg (2007).

\bibitem{fat.solenoidal} M. Tsujii - Fat Solenoidal Attractors, Nonlinearity 14, 1011-1027 (2001).

\bibitem{tsujii.acta} M. Tsujii - Physical measures for partially hyperbolic surface endomorphisms, Acta Math. 194, 37-132 (2005).		

\bibitem{urbansky.wolf} M. Urbansky, C. Wolf - SRB measures for Axiom A endomorphisms, Math. Res. Lett. 11(5-6), 785-797 (2004).

\end{thebibliography}
\end{document}